\newtheorem{thm}{Theorem}[section]
\newtheorem{cor}[thm]{Corollary}
\newtheorem{lem}[thm]{Lemma}
\newtheorem{prop}[thm]{Proposition}
\theoremstyle{definition}
\newtheorem{dfn}[thm]{Definition}
\newtheorem{rem}[thm]{Remark}
\newtheorem{ques}[thm]{Question}
\newtheorem{ex}[thm]{Example}
\newtheorem{claim}{Claim}
\newtheorem*{claim*}{Claim}
\theoremstyle{remark}
\newtheorem*{ac}{Acknowlegments}
\numberwithin{equation}{thm}
\def\Hom{\operatorname{Hom}}
\def\Ext{\operatorname{Ext}}
\def\ann{\operatorname{ann}}
\def\depth{\operatorname{depth}}
\def\End{\operatorname{End}}
\def\ker{\operatorname{Ker}}
\def\coker{\operatorname{Coker}}
\def\image{\operatorname{Im}}
\def\p{\mathfrak{p}}
\def\ge{\geqslant}
\def\geq{\geqslant}
\def\le{\leqslant}
\def\leq{\leqslant}
\def\iff{\Leftrightarrow}
\def\implies{\Rightarrow}
\def\edim{\operatorname{edim}}
\def\m{\mathfrak{m}}
\def\n{\mathfrak{n}}
\begin{document}
\allowdisplaybreaks
\title[Rings with self-dual maximal ideals]{Local rings with self-dual maximal ideal}
%\date{\today}
\author{Toshinori Kobayashi}
\address{Graduate School of Mathematics, Nagoya University, Furocho, Chikusaku, Nagoya, Aichi 464-8602, Japan}
\email{m16021z@math.nagoya-u.ac.jp}
\thanks{2010 {\em Mathematics Subject Classification.} 13C14, 13E15, 13H10}
\thanks{{\em Key words and phrases.} Gorenstein ring, canonical ideal, birational extension}
\thanks{The auther was supported by JSPS Grant-in-Aid for JSPS Fellows 18J20660.}
\begin{abstract}
	Let $R$ be a Cohen-Macaulay local ring possessing a canonical module.
	In this paper we consider when the maximal ideal of $R$ is self-dual, i.e.\ it is isomorphic to its canonical dual as an $R$-module.
	local rings satisfying this condition are called Teter rings, and studied by Teter, Huneke-Vraciu, Ananthnarayan-Avramov-Moore, and so on.
	In the one dimensional case, we show such rings are exactly the endomorphism rings of the maximal ideals of some Gorenstein local rings of dimension one.
	We also provide some connection between the self-duality of the maximal ideal and near Gorensteinness.
\end{abstract}
\maketitle
\section{Introduction}

Let $R$ be a Cohen-Macaulay local ring with a canonical module $\omega$.
For an $R$-module $M$, we denote by $M^\dag$ the $R$-module $\Hom_R(M,\omega)$.
The $R$-module $M$ is called {\it self-dual} if there exists an isomorphism $M\xrightarrow[]{\cong} M^\dag$ of $R$-modules.
Note that the self-duality of $R$-modules is independent of the choice of $\omega$.

Let $R$ and $S$ be artinian local rings such that $S$ maps onto $R$.
Denote by $c_S(R)$ the colength $\ell_S(S)-\ell_S(R)$.
In the case that $S$ is Gorenstein, the integer $c_S(R)$ is used to estimate  homological properties of $R$, for example, see \cite[Theorem 7.5]{Kustin-Vraciu}.
Ananthnarayan \cite{Ananthnarayan} introduced the {\it Gorenstein colength} $g(R)$ of an artinian local ring $(R,\m,k)$ to be the following integer
\[
g(R)\coloneqq\min\{c_S(R) \mid S \text{ is a Gorenstein artinian local ring mapping onto } R\}.
\]

The number $g(R)$ measures how close $R$ is to a Gorenstein ring.
Clearly, $g(R)$ is zero if and only if $R$ is Gorenstein.
One can see that $g(R)=1$ if and only if $R$ is non-Gorenstein and $R\cong S/\mathrm{soc}(S)$ for an artinian Gorenstein ring $S$.
These rings are called {\em Teter rings}.
On Teter rings, the following characterization is known, which is an improvement of Teter's result \cite{Teter}.
This was proved by Huneke-Vraciu \cite{Huneke-Vraciu} under the assumption that $1/2\in R$ and $\mathrm{soc}(R)\subseteq\m^2$, and later Ananthnarayan-Avramov-Moore \cite{A-A-M} removed the assumption $\mathrm{soc}(R)\subseteq\m^2$.
See also the result of Elias-Takatsuji \cite{{Elias-Takatuji}}.

%\begin{thm} [Teter]
%Let $(R,\m)$ be an Artinian local ring. Then the following are equivalent.
%\begin{itemize}
%\item[(1)] $g(R)\leq 1$.
%\item[(2)] Either $R$ is Gorenstein or there is an isomorphism $\phi\colon\m\to\m^\dag$ such that $\phi(x)(y)=\phi(y)(x)$, for all $x,y\in\m$.
%\end{itemize}
%\end{thm}

\begin{thm} [Huneke-Vraciu, Ananthnarayan-Avramov-Moore, Elias-Takatuji] \label{thHVAAM}
Let $(R,\m,k)$ be an artinian local ring such that either $R$ contains $1/2$ or $R$ is equicharacteristic with $\mathrm{soc}(R)\subseteq \m^2$.
Then the following are equivalent.
\begin{itemize}
\item[(1)] $g(R)\leq 1$.
\item[(2)] Either $R$ is Gorenstein or $\m\cong \m^\dag$.
\item[(3)] Either $R$ is Gorenstein or there exists a surjective homomorphism $\omega \to \m$.
\end{itemize}
\end{thm}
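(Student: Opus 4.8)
The plan is to establish $(2)\Leftrightarrow(3)$ and $(1)\Rightarrow(3)$ by elementary duality, and to put the real work into $(3)\Rightarrow(1)$. Throughout I would use that for an artinian local ring a canonical module $\omega$ is an injective hull of $k$, so that $(-)^\dag=\Hom_R(-,\omega)$ is exact, length-preserving and satisfies $M^{\dag\dag}\cong M$, and that $\omega$ has a unique simple submodule, namely $\mathrm{soc}(\omega)\cong k$. For $(2)\Leftrightarrow(3)$: apply $(-)^\dag$ to $0\to\m\to R\to k\to0$; since $\omega$ is injective $\Ext^1_R(k,\omega)=0$, and since the kernel of the restriction $\omega=\Hom_R(R,\omega)\to\Hom_R(\m,\omega)$ is the set of homomorphisms killing $\m$, i.e.\ $\mathrm{soc}(\omega)$, one obtains an exact sequence
\[
0\to\mathrm{soc}(\omega)\to\omega\to\m^\dag\to0 .
\]
Hence there is always a surjection $\omega\twoheadrightarrow\m^\dag$, which gives $(2)\Rightarrow(3)$ after composing with an isomorphism $\m^\dag\cong\m$. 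Conversely, if $R$ is not Gorenstein and $\psi\colon\omega\twoheadrightarrow\m$ is surjective, then $\ell(\ker\psi)=\ell(\omega)-\ell(\m)=1$, so $\ker\psi$ is simple, hence equals $\mathrm{soc}(\omega)$, and comparison with the displayed sequence gives $\m\cong\omega/\mathrm{soc}(\omega)\cong\m^\dag$.

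For $(1)\Rightarrow(3)$: if $g(R)=0$ then $R$ is Gorenstein; if $g(R)=1$ then, by the description of the case $g(R)=1$ recalled above, $R\cong S/\mathrm{soc}(S)$ for an artinian Gorenstein ring $S$, with $R$ non-Gorenstein. Using $\omega_S\cong S$ and $\omega_R\cong\Hom_S(S/\mathrm{soc}(S),\omega_S)$,
\[
\omega_R\cong\Hom_S(S/\mathrm{soc}(S),S)\cong(0:_S\mathrm{soc}(S))=\m_S ,
\]
the last equality because $\m_S\cdot\mathrm{soc}(S)=0$ puts $\m_S$ inside $(0:_S\mathrm{soc}(S))$ while $\mathrm{soc}(S)\ne0$ keeps $(0:_S\mathrm{soc}(S))$ proper. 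The canonical surjection $\m_S\twoheadrightarrow\m_S/\mathrm{soc}(S)=\m_R$ is then the required map $\omega_R\twoheadrightarrow\m_R$; combined with $(2)\Leftrightarrow(3)$ this also yields $(1)\Rightarrow(2)$.

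The main point is $(3)\Rightarrow(1)$. Assume $R$ is not Gorenstein and fix a surjection $\phi\colon\omega\twoheadrightarrow\m$. The guiding observation is that any Teter cover of $R$ must be a square-zero extension $0\to k\to S\to R\to0$ with $\m_S\cong\omega$ as $R$-modules, on which the multiplication of $x,y\in\m_S=\omega$ is of the form $x\cdot y=\phi(x)\,y$, where $\phi(x)\in\m\subseteq R$ acts on $y\in\omega$; and this construction can be run backwards starting from $\phi$. Concretely, when $R$ is equicharacteristic I would fix a coefficient field and set $S:=k\oplus\omega$ with
\[
(\lambda,x)(\mu,y):=\bigl(\lambda\mu,\ \lambda y+\mu x+\phi(x)y\bigr).
\]
Then associativity is automatic because $\phi$ is $R$-linear (so $\phi(\phi(x)y)=\phi(x)\phi(y)$), commutativity holds exactly when $\phi(x)y=\phi(y)x$ for all $x,y$, the assignment $(\lambda,x)\mapsto\lambda+\phi(x)$ is a surjective ring homomorphism onto $R$ whose kernel $\ker\phi$ has length $1$ and therefore equals $\mathrm{soc}(\omega)=\mathrm{soc}(S)$, so $S$ is Gorenstein with $\ell(S)=\ell(R)+1$ and $g(R)=1$; for non-equicharacteristic $R$ one replaces $k\oplus\omega$ by an abstract square-zero extension of $R$ by $k$ with maximal ideal $\cong\omega$, the rest being identical. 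Thus everything reduces to choosing $\phi$ \emph{symmetric}, that is, with $\phi(x)y=\phi(y)x$, and a surjection furnished by $(3)$ need not be symmetric. This is precisely where the hypothesis enters: when $1/2\in R$ one passes to the symmetric part of $\phi$ under the splitting $\omega\otimes_R\omega\cong\operatorname{Sym}^2_R\omega\oplus\wedge^2_R\omega$ and must check that the symmetrization is still onto $\m$; when $R$ is equicharacteristic with $\mathrm{soc}(R)\subseteq\m^2$ one constructs a symmetric surjection directly, exploiting that condition. I expect this symmetrization step — the point at which all the hypotheses are used, and which accounts for the successive refinements by Huneke--Vraciu, Ananthnarayan--Avramov--Moore, and Elias--Takatsuji — to be the main obstacle, and I would follow those papers for it.
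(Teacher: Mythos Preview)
The paper does not give its own proof of this theorem; it is stated as background, attributed to Huneke--Vraciu, Ananthnarayan--Avramov--Moore, and Elias--Takatsuji, and cited without argument. There is therefore nothing in the paper to compare your proposal against. That said, your sketch is a correct outline of the standard proof from those sources: the duality arguments for $(2)\Leftrightarrow(3)$ and $(1)\Rightarrow(3)$ are exactly right, and the construction of $S$ for $(3)\Rightarrow(1)$ as a square-zero extension of $R$ with maximal ideal isomorphic to $\omega$ is indeed the approach taken in those papers. You correctly isolate the symmetrization of $\phi$ (ensuring $\phi(x)y=\phi(y)x$) as the place where the hypotheses enter and the genuine work lies; your explicit deferral to the cited papers for that step mirrors what the present paper does for the entire statement.
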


Moreover, Ananthnarayan \cite{Ananthnarayan} extended this theorem to the case $g(R)\leq 2$ as follows.

\begin{thm} [Ananthnarayan] \label{Anan}
Let $(R,\m)$ be an artinian local ring. Write $R\cong T/I$ where $(T,\m_T)$ is a regular local ring and $I$ is an ideal of $T$.
Suppose $I\subseteq \m_T^6$ and $1/2\in R$.
Then the following are equivalent.
\begin{itemize}
\item[(1)] $g(R)\leq 2$.
\item[(2)] There exists a self-dual ideal $\mathfrak{a}\subseteq R$ such that $l(R/\mathfrak{a})\leq 2$.
\end{itemize}
\end{thm}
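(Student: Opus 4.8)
The plan is to deduce the theorem from Theorem~\ref{thHVAAM}, which is the case ``colength $\le1$''. I will use freely that for an artinian local ring the canonical module $\omega$ is the injective hull $E_R(k)$ of the residue field---so $\Ext_R^1(R/\mathfrak a,\omega)=0$ for every ideal $\mathfrak a$, whence the restriction $\omega=R^\dag\to\mathfrak a^\dag$ is surjective, and $\End_R(\omega)\cong R$---and that if $S$ is Gorenstein artinian with $R=S/J$ then $\omega_R\cong(0:_SJ)$, the ring $S$ has one-dimensional socle, and the identities $(0:_S(0:_S\mathfrak b))=\mathfrak b$ and $(0:_S(\mathfrak b\cap\mathfrak c))=(0:_S\mathfrak b)+(0:_S\mathfrak c)$ hold in $S$.

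For $(1)\Rightarrow(2)$, fix a Gorenstein artinian $S$ with $S\twoheadrightarrow R=S/J$, $\ell_S(J)\le2$, and set $\mathfrak a:=\bigl((0:_SJ)+J\bigr)/J$, the image in $R$ of $\omega_R\cong(0:_SJ)$. The surjection $(0:_SJ)\twoheadrightarrow\mathfrak a$ has kernel $K:=(0:_SJ)\cap J$, so $\ell(R/\mathfrak a)=\ell_S(K)\le\ell_S(J)\le2$. Moreover $\mathfrak a\cong\omega_R/K$ together with $\End_R(\omega_R)\cong R$ gives $\mathfrak a^\dag=\Hom_R(\omega_R/K,\omega_R)\cong\ann_R(K)$; since $\ann_S(K)=(0:_S((0:_SJ)\cap J))=(0:_S(0:_SJ))+(0:_SJ)=J+(0:_SJ)$, we get $\ann_R(K)=\mathfrak a$, so $\mathfrak a$ is self-dual. (Only ``$S$ Gorenstein'' is used; the standing hypotheses play no role in this direction.)

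For $(2)\Rightarrow(1)$, let $\mathfrak a\subseteq R$ be self-dual with $t:=\ell(R/\mathfrak a)\le2$. If $t=0$ then $R\cong R^\dag=\omega$ is Gorenstein; if $t=1$ then $\mathfrak a=\mathfrak m$ and $\mathfrak m\cong\mathfrak m^\dag$, so $g(R)\le1$ by Theorem~\ref{thHVAAM}. The case $t=2$ is the heart of the matter: one must produce a Gorenstein artinian $S$ with a colength-$2$ surjection $S\twoheadrightarrow R$, equivalently a ring extension $0\to K\to S\to R\to0$ with $\ell_S(K)=2$ and $S$ Gorenstein (then $\Hom_S(R,S)\cong\omega_R$ and $c_S(R)=2$ are automatic). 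From the self-duality together with the surjection $\omega\twoheadrightarrow\mathfrak a^\dag$ one obtains a surjection $\pi\colon\omega\twoheadrightarrow\mathfrak a\hookrightarrow R$, and using $1/2\in R$ one may take the isomorphism $\mathfrak a\xrightarrow{\cong}\mathfrak a^\dag$ to be symmetric, so that $\pi$ encodes a symmetric pairing. Imitating Teter's construction from the $g(R)\le1$ case, one equips a length-$(\ell(R)+2)$ extension of $R$ built from this pairing with a compatible ring structure, obtains $S$, and checks that the kernel of $S\twoheadrightarrow R$ has length $2$ and that $\operatorname{soc}(S)$ is one-dimensional. This last check is the main obstacle, and it is precisely where both $1/2\in R$ and $I\subseteq\m_T^6$ are needed: the degree bound forces $R$ to have high enough order that the new socle elements created by the deformation cannot contribute extra socle to $S$, keeping the associativity and socle obstructions under control.

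It is clean to package all of this via the remark that $g(R)\le2$ if and only if there is a surjection $R'\twoheadrightarrow R$ with kernel $0$ or $\cong k$ and $g(R')\le1$: from a colength-$2$ Gorenstein cover $S\twoheadrightarrow R$, the ring $R'=S/\operatorname{soc}(S)$ satisfies $g(R')\le1$ (it is Teter or Gorenstein) and $\ker(R'\twoheadrightarrow R)$ has length $1$, while conversely colengths add along $S\twoheadrightarrow R'\twoheadrightarrow R$. Combined with Theorem~\ref{thHVAAM}, the statement then amounts to translating ``$R'$ Gorenstein or $\mathfrak m_{R'}$ self-dual for some such $R'$'' into ``$R$ has a self-dual ideal of colength $\le2$'', matching $\operatorname{soc}(R')$ (resp.\ the extra length) with the ideal $K$ of the first part; the $t=2$ direction of that translation is exactly the construction flagged above as the main obstacle.
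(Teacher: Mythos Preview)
The paper does not prove Theorem~\ref{Anan}: it is quoted as a result of Ananthnarayan \cite{Ananthnarayan} and serves only as motivation for the one-dimensional analogue, Theorem~\ref{thB}. There is therefore no proof in the paper to compare your proposal against.

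On the merits of your proposal itself: the direction $(1)\Rightarrow(2)$ is clean and correct. The identification $\ell(R/\mathfrak a)=\ell_S(K)$ via Matlis duality and the computation $\mathfrak a^\dag\cong\ann_R(K)=\bigl((0:_SJ)+J\bigr)/J=\mathfrak a$ both go through as you wrote them, and indeed only Gorensteinness of $S$ is used.

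The direction $(2)\Rightarrow(1)$, however, is not a proof. The cases $t\le1$ are fine, but for $t=2$ you explicitly identify the construction of the Gorenstein cover $S$ as ``the main obstacle'' and then do not carry it out. Sentences such as ``imitating Teter's construction, one equips a length-$(\ell(R)+2)$ extension \ldots with a compatible ring structure, obtains $S$, and checks that \ldots'' are an outline, not an argument: the entire substance of Ananthnarayan's theorem lies in making this construction work and in showing precisely how the hypothesis $I\subseteq\m_T^6$ controls the new socle. Your final repackaging via $R'=S/\mathrm{soc}(S)$ is a pleasant reformulation, but it does not close the gap, since producing such an $R'$ with $g(R')\le1$ from the self-dual ideal $\mathfrak a$ is equivalent to the very construction you have deferred. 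As it stands, the hard direction remains a sketch.
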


In this paper, we try to extend the notion of Gorenstein colengths and the above results to the case that $R$ is a one-dimensional Cohen-Macaulay local ring.

For a local ring $(R,\m)$, we denote by $Q(R)$ the total quotient ring of $R$. An extension $S\subseteq R$ of local rings is called {\it birational} if $R\subseteq Q(S)$.
In this case, $R$ and $S$ have same total quotient ring.  

Let $(S,\mathfrak{n})\subseteq (R,\m)$ be an extension of local rings.
Suppose $\mathfrak{n}=\m\cap S$.
Then $S\subseteq R$ is called {\it residually rational} if there is an isomorphism $S/\mathfrak{n}\cong R/\m$ induced by the natural inclusion $S\to R$.
For example, if $S\subseteq R$ is module-finite and $S/\mathfrak{n}$ is algebraically closed, then it automatically follows that $S\subseteq R$ is residually rational.
We introduce an invariant $bg(R)$ for local rings $R$ as follows, which is the infimum of Gorenstein colengths in birational maps.

\begin{dfn}
For a local ring $R$, we define
\[
bg(R)\coloneqq\inf\left\{\ell_S(R/S) \middle|
\begin{array}{l}
 S\text{ is Gorenstein and }S\subseteq R\text{ is a module-finite}\\
\text{residually rational birational map of local rings}
\end{array}
\right\}.
\]
\end{dfn}

We will state the main results of this paper by using this invariant.
The first one is the following theorem, which gives a one-dimensional analogue of Theorem \ref{thHVAAM}.

\begin{thm} \label{thA}
Let $(R,\m)$ be a one-dimensional Cohen-Macaulay local ring having a canonical module $\omega$.
Consider the following conditions.
\begin{itemize}
\item[(1)] $bg(R)\leq 1$.
\item[(2)] Either $R$ is Gorenstein or there exists a Gorenstein local ring $(S,\mathfrak{n})$ of dimension one such that $R\cong \End_S(\mathfrak{n})$.
\item[(3)] Either $R$ is Gorenstein or  $\m\cong \m^\dag$.
\item[(4)] Either $R$ is Gorenstein or there is a short exact sequence $0\to \omega \to \m \to k \to 0$.
\item[(5)] There is an ideal $I$ of $R$ such that $I\cong \omega$ (i.e. $I$ is a canonical ideal of $R$) and $l(R/I)\leq 2$.
\end{itemize}
Then the implications $(1)\implies (2) \implies (3) \iff (4) \iff (5)$ hold. The direction $(5) \implies (1)$ also holds  if $R$ contains an infinite field $k$ as a subalgebra which is isomorphic to $R/\m$ via the projection $R\to R/\m$, i.e. $R$ has an infinite coefficient field $k\subseteq R$.
\end{thm}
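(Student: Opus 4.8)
The equivalences $(3)\iff(4)\iff(5)$, and the implications $(1)\implies(2)\implies(3)$, I would prove by dualizing short exact sequences and using the elementary fact that for a one‑dimensional Cohen–Macaulay local ring, $\End_R(\m)=R$ if and only if $R$ is a DVR; in particular $\m\cong\omega$ forces $R$ to be a DVR (this last observation will be used repeatedly). Throughout one may assume $R$ is not Gorenstein, since otherwise every condition holds trivially ($bg(R)=0$ via $S=R$). Applying $\Hom_R(-,\omega)$ to $0\to\m\to R\to k\to 0$ and using $\Hom_R(k,\omega)=0$, $\Ext^1_R(k,\omega)\cong k$, $\Ext^{\ge1}_R(R,\omega)=0$, together with $\Ext^1_R(\m,\omega)=0$ (which comes from $\Ext^{\ge2}_R(k,\omega)=0$), yields an exact sequence $0\to\omega\to\m^\dag\to k\to 0$; hence $\m\cong\m^\dag$ is equivalent to the existence of $0\to\omega\to\m\to k\to 0$, giving $(3)\iff(4)$. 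For $(4)\iff(5)$: such a sequence makes $\image(\omega\to\m)$ a canonical ideal of colength $2$, while conversely, if $I\cong\omega$ is an ideal with $\ell(R/I)\le2$, then $I$ is not principal, so $I\subseteq\m$, and $\ell(R/I)=1$ would force $I=\m\cong\omega$ and $R$ a DVR, which is excluded; so $\ell(R/I)=2$ and $0\to\omega\to\m\to k\to 0$ is exact.

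For $(1)\implies(2)$, a Gorenstein $(S,\n)\subseteq R$ witnessing $bg(R)=1$ is module‑finite, residually rational and birational with $R/S\cong k$, hence $\n R\subseteq S$; since $S$ is not a DVR, $\End_S(\n)=\n^{-1}$, and one checks $R\subseteq\n^{-1}$ (an $x\in R$ with $x\n\not\subseteq\n$ would make $\n$ invertible, hence $S$ a DVR) and, by dualizing $0\to\n\to S\to k\to 0$ over $S$, that $\ell_S(\n^{-1}/S)=1$; comparing lengths gives $R=\n^{-1}=\End_S(\n)$. For $(2)\implies(3)$, write $R\cong\End_S(\n)=\n^{-1}$ with $S$ Gorenstein and not a DVR. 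The base‑change formula gives $\omega_R\cong\Hom_S(\n^{-1},S)=(S:\n^{-1})=\n$, an ideal of $R$ with $R\n=\n$; from $\ell_S(R/\n)=2$ the residue field of $R$ has degree $\le2$ over $k$, and degree $2$ would give $\m=\n\cong\omega_R$, hence $R$ a DVR — excluded — so $\n\subsetneq\m$ with $\ell_R(\m/\n)=1$. Then $\m^\dag\cong\Hom_R(\m,\n)=(\n:\m)$ is an ideal with $\n\subseteq(\n:\m)\subseteq\m$ which is not $\n$ (else $\m^\dag\cong\omega_R$, so $\m\cong R$ and $R$ a DVR), whence $(\n:\m)=\m$ and $\m\cong\m^\dag$.

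The main part is $(5)\implies(1)$, where I would use the coefficient field $k\subseteq R$ (and, where needed, its infiniteness). By $(5)$ and the length count above, fix a canonical ideal $\mathfrak{c}\subseteq\m$ of $R$ with $\ell_R(R/\mathfrak{c})=2$, and set $S_0:=k+\mathfrak{c}\subseteq R$. One checks readily that $S_0$ is a local ring with maximal ideal $\mathfrak{c}$ and residue field $k$, Noetherian by the Eakin–Nagata theorem, one‑dimensional Cohen–Macaulay, with $S_0\subseteq R$ module‑finite, residually rational and birational, $\ell_{S_0}(R/S_0)=1$, and $\End_{S_0}(\m_{S_0})=(\mathfrak{c}:\mathfrak{c})=R$ (using $\mathfrak{c}\cong\omega_R$). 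So everything is in place to conclude $bg(R)\le1$ — \emph{provided} $S_0$ is Gorenstein, and this is the step I expect to be the real obstacle.

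To show $S_0$ is Gorenstein, dualize $0\to\m_{S_0}\to S_0\to k\to 0$ over $\omega_{S_0}$ to obtain $0\to\omega_{S_0}\to\Hom_{S_0}(\mathfrak{c},\omega_{S_0})\to k\to 0$. Since $\mathfrak{c}\cong\omega_R$ as $R$‑modules and $\omega_R=\Hom_{S_0}(R,\omega_{S_0})$ (base change along the finite map $S_0\subseteq R$), Hom–tensor adjunction identifies $\Hom_{S_0}(\mathfrak{c},\omega_{S_0})\cong\Hom_R(\omega_R,\omega_R)=R$. Thus $\omega_{S_0}$ is isomorphic to an $S_0$‑submodule $N$ of $R$ with $\ell_{S_0}(R/N)=1$. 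As $N\supseteq\m_{S_0}R=\mathfrak{c}$ and $R/\mathfrak{c}$ is a two‑dimensional $k$‑space spanned by $\bar1$ and $\bar\theta$ for some $\theta\in\m\setminus\mathfrak{c}$, the only possibilities are $N=\m$ or $N=vS_0\cong S_0$ with $v$ a unit of $R$. But $\End_{S_0}(N)=\End_{S_0}(\omega_{S_0})=S_0$, whereas $\End_{S_0}(\m)\supseteq R\supsetneq S_0$, so $N\neq\m$; hence $\omega_{S_0}\cong S_0$, i.e.\ $S_0$ is Gorenstein, and $bg(R)\le\ell_{S_0}(R/S_0)=1$.
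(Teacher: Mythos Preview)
Your argument for $(1)\Rightarrow(2)\Rightarrow(3)\Leftrightarrow(4)\Leftrightarrow(5)$ is correct and close to the paper's, with cosmetic differences: the paper deduces $\m\cong\omega\Rightarrow R$ DVR via $\m/x\m\cong k\oplus\m/(x)$ and indecomposability of $\omega/x\omega$, rather than via $\End_R(\omega)=R$; and for $(2)\Rightarrow(3)$ the paper computes $S:\m=\m$ and identifies $S:\m\cong\m^\dag$, whereas you first identify $\omega_R\cong\mathfrak{n}$ and then compute $\mathfrak{n}:\m=\m$. Both routes are fine.

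The real divergence is in $(5)\Rightarrow(1)$. You build $S_0=k+\mathfrak{c}$ exactly as the paper does, but to prove $S_0$ Gorenstein you dualize $0\to\mathfrak{c}\to S_0\to k\to 0$ against $\omega_{S_0}$, identify $\Hom_{S_0}(\mathfrak{c},\omega_{S_0})\cong R$, and then classify the $S_0$-submodules of $R$ of colength one. This is correct once $\omega_{S_0}$ is known to exist, but you never justify that: the hypothesis only gives a canonical module for $R$, and existence of canonical modules does not descend along finite extensions in general. (It can be repaired by passing to completions, but you should say so.) The paper sidesteps this entirely and is much shorter: since $\mathfrak{c}\cong\omega_R$, one has $\m_{S_0}:\m_{S_0}=\mathfrak{c}:\mathfrak{c}=\End_R(\omega_R)=R$, and the elementary identity $\ell_{S_0}\bigl((\m_{S_0}:\m_{S_0})/S_0\bigr)=r(S_0)$ (your ``dualize $0\to\n\to S\to k\to 0$'' from the $(1)\Rightarrow(2)$ step, used now for $S_0$) gives $r(S_0)=\ell_{S_0}(R/S_0)=1$ directly, with no need for $\omega_{S_0}$ or the submodule classification. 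Your detour does buy an explicit description of all colength-one $S_0$-submodules of $R$, but that is not needed here.
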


The existence of a canonical ideal $I$ of $R$ with $\ell_R(R/I)=2$ is considered by Dibaei-Rahimi \cite{DR}.
Using their notion, the condition (5) above is equivalent to the condition that $\min(S_{\mathfrak{C}_R})\le 2$.

We also remark that Bass's idea \cite{B} tells us the importance of the endomorpshism ring $\End_S(\mathfrak{n})$ of the maximal ideal $\mathfrak{n}$ of a Gorenstein local ring $S$ of dimension one.
He shew that any torsion-free $S$-module without non-zero free summand can be regarded as a module over $\End_S(\mathfrak{n})$.
So we can analyze Cohen-Macaulay representations of $R$ via the ring $\End_S(\mathfrak{n})$ (see also \cite[Chapter 4]{LW}).

As a corollary, we can characterize Cohen-Macaulay local rings $R$ of dimension one having minimal multiplicity and satisfying $bg(R)\leq 1$.
To give the statement of our corollary, we recall some definitions.
For a local ring $R$, we denote by $e(R)$ the multiplicity of $R$, $r(R)$ the Cohen--Macaulay type, and by $\edim R$ the embedding dimension of $R$.
According to Goto-Matsuoka-Phuong \cite{GMP}, a local ring $R$ is called {\em almost Gorenstein}, if $R$ possesses a canonical ideal $I$ of $R$ such that $e_1(I)\le r(R)$, where $e_1(I)$ is the first Hilbert coefficient of $I$.
A Gorenstein ring of dimension one satisfying $e(S)= \edim S+1$ are called {\em a ring of almost minimal multiplicity} or a {\em Gorenstein ring of minimal multiplicity}, and studied by J.\ D.\ Sally \cite{Sally}.
The invariant $\rho(R)$ is the {\em canonical index} of $R$, introduced by Ghezzi-Goto-Hong-Vasconcelos \cite{GGHV}.

\begin{cor} \label{co1}
Let $(R,\m)$ be a one-dimensional Cohen-Macaulay local ring.
Consider the following conditions.
\begin{itemize}
\item[(1)] $bg(R)\leq 1$ and $R$ has minimal multiplicity.
\item[(2)] Either $e(R)\leq 2$ or $R$ is almost Gorenstein with $bg(R)=1$.
\item[(3)] $\m\cong \m^\dag$ and $R$ is almost Gorenstein.
\item[(4)] $\m\cong \m^\dag$ and $R$ has minimal multiplicity.
\item[(5)] $R$ is almost Gorenstein and has minimal multiplicity.
\item[(6)] There exists a Gorenstein local ring $(S,\mathfrak{n})$ of dimension one such that $e(S)\leq \edim S+1$ and $R\cong \End_S(\mathfrak{n})$.
\item[(7)] $\m\cong \m^\dag$ and $\rho(R)\le 2$.
\end{itemize}
Then $(1) \iff (2) \implies (3) \iff (4) \iff (5)$ holds. If $R/\m$ is infinite, then $(5) \iff (7)$ and $(6) \implies (4)$ hold.
If $R$ has an infinite coefficient field $k\subseteq R$, then all the conditions are equivalent.
\end{cor}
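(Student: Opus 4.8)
The plan is to deduce the whole Corollary from Theorem \ref{thA}, the point being to reorganize its ``Gorenstein'' alternatives into conditions on the multiplicity. I would first record three elementary facts about a one-dimensional Cohen--Macaulay local ring $R$. (i) Minimal multiplicity is equivalent to $e(R)=\edim R$, and to $r(R)=e(R)-1$; the last holds because minimal multiplicity means $\m^2\subseteq(x)$ for a minimal reduction $x$ of $\m$, so that $R/(x)$ has square-zero maximal ideal and hence socle $\m/(x)$, while conversely $r(R)=e(R)-1$ forces this. (ii) $e(R)\le 2$ implies $R$ is a hypersurface, hence Gorenstein, so by (i) the condition ``$R$ Gorenstein with minimal multiplicity'' is exactly $e(R)\le 2$. (iii) $e(R)\le 2$ is equivalent to ``$R$ Gorenstein and $\m\cong\m^\dag$'': by (ii) the former forces $R$ Gorenstein, and then a short local-duality computation gives $\m\cong\m^\dag\iff(0\to\omega\to\m\to k\to0\text{ is exact})\iff(\ell_R(R/(y))=2\text{ for some parameter }y)\iff e(R)\le 2$.

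The core step is $(3)\iff(4)\iff(5)$, i.e.\ that among $\m\cong\m^\dag$, almost Gorenstein, and minimal multiplicity, any two imply the third. When $R$ is Gorenstein this is immediate from (i)--(iii), since Gorenstein rings are almost Gorenstein and among them all three conditions say $e(R)\le 2$. When $R$ is not Gorenstein, Theorem \ref{thA} ($(3)\iff(5)$) identifies $\m\cong\m^\dag$ with the existence of a canonical ideal $J\subseteq\m$ with $\ell_R(R/J)=2$; equivalently, for a fractional canonical ideal $K$ with $R\subseteq K\subseteq\overline R$, it determines $K$ up to small colength data. I would then invoke the characterizations of one-dimensional almost Gorenstein rings via canonical ideals---Goto--Matsuoka--Phuong through the first Hilbert coefficient, and Goto--Takahashi--Taniguchi through the condition $\m K=\m$, i.e.\ $\m$ being an ideal of $R[K]$---and check that, in the presence of a canonical ideal of colength $2$, almost Gorensteinness becomes equivalent to $\m^2=x\m$. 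Concretely this amounts to computing $e_1(J)$ from $\ell_R(R/J)=2$ and the reduction number of $J$ and comparing with $r(R)=e(R)-1$ from (i). Matching up these colength, reduction-number, and Hilbert-coefficient data is the technical heart, and I expect it to be the main obstacle of the whole argument.

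Granting the core step, the remaining assertions over a general $R$ are formal. For $(2)\implies(3)$: if $e(R)\le 2$ then $R$ is Gorenstein with $\m\cong\m^\dag$ by (ii)--(iii) and is almost Gorenstein for free; if $R$ is almost Gorenstein with $bg(R)=1$, then $R$ is non-Gorenstein and $\m\cong\m^\dag$ by Theorem \ref{thA} ($(1)\implies(3)$). For $(1)\iff(2)$: when $R$ is Gorenstein, $bg(R)=0$, so $(1)$ reads ``$R$ has minimal multiplicity'', which by (ii) is $e(R)\le 2$, which is $(2)$; when $R$ is non-Gorenstein, $bg(R)\le 1$ means $bg(R)=1$, which by Theorem \ref{thA} forces $\m\cong\m^\dag$, whereupon the core step makes ``minimal multiplicity'' and ``almost Gorenstein'' interchangeable, so $(1)$ and $(2)$ coincide.

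It remains to treat the two refinements, for which I would also want the lemma that, for a Gorenstein one-dimensional local ring $S$, the ring $\End_S(\mathfrak n)=\mathfrak n^{-1}$ has minimal multiplicity if and only if $e(S)\le\edim S+1$---proved by a direct computation with $\mathfrak n^{-1}$ using Sally's structure of Gorenstein one-dimensional rings of minimal or almost minimal multiplicity, whose maximal ideal has reduction number at most $2$. When $R/\m$ is infinite, for $(5)\iff(7)$ it suffices (via $(4)\iff(5)$) to show that, under $\m\cong\m^\dag$, minimal multiplicity is equivalent to $\rho(R)\le 2$; here I would use Ghezzi--Goto--Hong--Vasconcelos's canonical index, rewriting $\rho(R)\le 2$ first as $K^2=R[K]$ and then, via the colength-$2$ description of $K$, as $\m^2=x\m$. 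For $(6)\implies(4)$: $R\cong\End_S(\mathfrak n)$ with $S$ Gorenstein and $e(S)\le\edim S+1$ has minimal multiplicity by the lemma, and Theorem \ref{thA} ($(2)\implies(3)$) gives that $R$ is Gorenstein or $\m\cong\m^\dag$, while (iii) forces $e(R)\le 2$ (hence $\m\cong\m^\dag$) in the Gorenstein case; so $(4)$ holds. Finally, when $R$ has an infinite coefficient field the converse $(5)\implies(1)$ of Theorem \ref{thA} applies, so $(4)$ gives $bg(R)\le 1$, hence $(1)$, and $(1)$--$(5)$ and $(7)$ are all equivalent; and for $(6)$, from $(4)\implies(1)\implies$ Theorem \ref{thA} $(2)$ one gets, when $R$ is non-Gorenstein, a Gorenstein birational subring $S$ with $R\cong\End_S(\mathfrak n)$, which by the lemma satisfies $e(S)\le\edim S+1$, while when $R$ is Gorenstein, $e(R)\le 2$ and $S=k+xR$ (for a minimal reduction $x$ of $\m$) is a Gorenstein ring of multiplicity at most $2$ with $\End_S(\mathfrak n)=R$---either way $(6)$ holds, closing the circle.
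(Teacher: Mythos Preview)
Your overall logical skeleton is sound, but the paper's proof is organized around a different and much cleaner pivot than the one you propose for the ``core step'' $(3)\iff(4)\iff(5)$. Instead of computing $e_1(J)$ from a colength-$2$ canonical ideal and matching it against $r(R)=e(R)-1$, the paper invokes Lemma~\ref{lem16}: for a one-dimensional Cohen--Macaulay local ring that is not a DVR, minimal multiplicity is equivalent to $\m\cong\m:\m$ (Ooishi), and almost Gorensteinness is equivalent to $\m^\dag\cong\m:\m$ (Kobayashi). With these two characterizations, any two of $\{\m\cong\m^\dag,\ \m\cong\m:\m,\ \m^\dag\cong\m:\m\}$ force the third, so $(3)\iff(4)\iff(5)$ is a one-line consequence. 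Your Hilbert-coefficient route may be made to work, but it is what you yourself flag as ``the main obstacle,'' whereas the paper's approach removes the obstacle entirely. The remaining implications $(1)\iff(2)\implies(3)$ in the paper match your outline.

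There is also a potential gap in your auxiliary lemma that, for a one-dimensional Gorenstein $(S,\mathfrak n)$, the ring $\End_S(\mathfrak n)$ has minimal multiplicity \emph{if and only if} $e(S)\le\edim S+1$. The ``if'' direction is exactly what the paper proves for $(6)\Rightarrow(4)$, by an explicit computation with Sally's structure theory (choosing a minimal reduction $t$ of $\mathfrak n$ and a socle representative $\delta$, then verifying $\m^2=t\m$ directly). But the paper does \emph{not} use the ``only if'' direction for $(4)\Rightarrow(6)$: rather, it constructs a \emph{specific} $S=k+I$ from a canonical ideal $I$ and then appeals to \cite[Theorem 3.16]{GMP} and \cite[Proposition 3.3]{Sally2} to verify $e(S)=\edim S+1$ for that particular $S$. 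Your biconditional would have to hold for an \emph{arbitrary} Gorenstein $S$ with $R\cong\End_S(\mathfrak n)$, and your sketch (``a direct computation with $\mathfrak n^{-1}$'') does not make clear why; at the very least, you are implicitly using that $S\subseteq R$ is residually rational and that $e(R)=e(S)$, neither of which you state. Similarly, for $(5)\iff(7)$ the paper does not rewrite $\rho(R)\le 2$ as $K^2=R[K]$; it cites \cite[Theorem 3.16]{GMP} for $(5)\Rightarrow(7)$ and combines \cite[Theorem 3.5(b), Proposition 3.8]{DR} with Theorem~\ref{thA} for $(7)\Rightarrow(5)$. Your approach here is plausible but again more work than the citations the paper relies on.
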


The second main theorem of this paper is the following, which is a one-dimensional analogue of Theorem \ref{Anan}.

\begin{thm} \label{thB}
Let $(R,\m)$ be a complete one-dimensional Cohen-Macaulay local ring. Consider the following conditions.
\begin{itemize}
\item[(1)] $bg(R)\leq 2$.
\item[(2)] There exists a self-dual ideal $\mathfrak{a}\subseteq R$ such that $\ell_R(R/\mathfrak{a})\leq 2$.
\end{itemize}
Then $(1)$ implies $(2)$. The implication $(2) \implies (1)$ also holds if $R$ has an infinite coefficient field $k\subseteq R$.
\end{thm}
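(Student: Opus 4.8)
The plan is to treat the two implications separately, with the harder work concentrated in $(2)\implies(1)$.

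For $(1)\implies(2)$, suppose $bg(R)\le 2$, so there is a Gorenstein local ring $(S,\n)$ with $S\subseteq R$ a module-finite residually rational birational extension and $\ell_S(R/S)\le 2$. Since $S$ is Gorenstein of dimension one, $S$ itself serves as a canonical module for $S$, and the canonical module of $R$ is $\omega_R\cong\Hom_S(R,S)$. The idea is to use the birational inclusion $S\subseteq R\subseteq Q(S)$ to realize everything inside the total quotient ring: both $R$ and the conductor-type modules become fractional ideals, and $\Hom_S(R,S)$ is identified with $\{x\in Q(S)\mid xR\subseteq S\}$. Set $\mathfrak{a}$ to be the image of a suitable map expressing the $S$-colength-$2$ containment $S\subseteq R$; concretely one considers the $R$-ideal $\mathfrak{a}=(S:_{Q(S)}R)\cdot R$, or equivalently $\mathfrak{a}=\Hom_S(R,S)\cdot$ (something). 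The key computations are: (i) $\ell_R(R/\mathfrak{a})\le 2$, which should follow from $\ell_S(R/S)\le 2$ together with residual rationality (so that $S$-length and $R$-length of the relevant modules coincide), and (ii) $\mathfrak{a}$ is self-dual as an $R$-module, i.e. $\mathfrak{a}\cong\Hom_R(\mathfrak{a},\omega_R)$. For (ii) the point is that $\mathfrak{a}$, viewed as a fractional ideal, satisfies $\mathfrak{a}\cdot\mathfrak{a}^{-1}$-type relations coming from $S$ being Gorenstein; more precisely $\Hom_R(\mathfrak{a},\omega_R)\cong(\omega_R:_{Q(R)}\mathfrak{a})$ and one shows this equals $\mathfrak{a}$ up to isomorphism using that $S=\Hom_S(R,S)\cdot R = \mathfrak{a}$ as fractional ideals (Gorensteinness of $S$ gives the reflexivity/self-pairing). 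This mirrors the one-dimensional constructions underlying the proof of Theorem \ref{thA}, and I would reuse those identifications.

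For $(2)\implies(1)$, assume $R$ is complete with infinite coefficient field $k\subseteq R$ and that there is a self-dual ideal $\mathfrak{a}$ with $\ell_R(R/\mathfrak{a})\le 2$. The self-duality $\mathfrak{a}\cong\mathfrak{a}^\dag$ means $\mathfrak{a}$ is in particular a faithful module with a symmetric structure; the first step is to recognize $\mathfrak{a}$ (up to isomorphism, hence up to the choice of $\omega$) as a fractional canonical-type ideal and to produce from it a subring $S\subseteq R$. The natural candidate is $S=\End_R(\mathfrak{a})^{-1}$-flavored, but more usefully: a self-dual ideal gives a trace-form pairing $\mathfrak{a}\times\mathfrak{a}\to\omega_R$, and one wants the subring $S$ for which $\mathfrak{a}$ becomes the conductor $\mathfrak{c}_{R/S}$ and $\omega_R\cong\Hom_S(R,S)$. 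Concretely I would set, inside $Q(R)=Q(S)$, $S\colon=(\mathfrak{a}:_{Q(R)}\mathfrak{a})$ intersected appropriately, or directly $S=k+\mathfrak{a}$ after normalizing $\mathfrak{a}$ so that $\mathfrak{a}\subseteq R$ with $R/\mathfrak{a}$ of length $\le 2$ and $\mathfrak{a}$ an ideal of the prospective $S$ too. The infinite-coefficient-field hypothesis is needed exactly here: it guarantees (via a general-position/superficial-element argument, as in the proof of $(5)\implies(1)$ in Theorem \ref{thA}) that we can choose the generator data so that $S\colon=k\oplus\mathfrak{a}$ (or $S=k[x]+\mathfrak{a}$ for a well-chosen $x$) is actually a \emph{ring}, module-finite and birational over which $R$ sits with $\ell_S(R/S)=\ell_R(R/\mathfrak{a})\le 2$. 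Finally one checks $S$ is Gorenstein: because $\mathfrak{a}$ is self-dual over $R$ and $\mathfrak{a}$ is (by construction) a canonical ideal of $S$ with $\Hom_S(R,S)\cong\omega_R$, the self-duality transports to $\omega_S\cong S$, i.e. $S$ is Gorenstein. Residual rationality is immediate since $S$ and $R$ share the coefficient field $k$.

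The main obstacle is the construction of the ring $S$ in $(2)\implies(1)$: turning the bare self-duality isomorphism $\mathfrak{a}\cong\mathfrak{a}^\dag$ into the statement that a concrete additive coset $k+\mathfrak{a}$ (or a slightly larger module generated by one carefully chosen element and $\mathfrak{a}$) is closed under multiplication and is Gorenstein. This is where the length bound $\le 2$ genuinely matters — it limits $R/\mathfrak{a}$ to at most two ``new'' generators over $S$, making the multiplication table manageable — and where the infinite residue/coefficient field is used to put those generators in general position so that the candidate subring is Gorenstein rather than merely Cohen--Macaulay. I expect the verification of Gorensteinness of $S$ to require the explicit fractional-ideal dictionary ($\omega_S=(S:\mathfrak{a})$-type formulas, the chain $S\subseteq R\subseteq\mathfrak{a}^{-1}$) together with the observation, already exploited in Theorem \ref{thA}, that for one-dimensional Cohen--Macaulay rings self-duality of an ideal is equivalent to its being a canonical ideal whose reciprocal recovers the base ring.
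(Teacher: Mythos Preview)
Both directions of your proposal contain genuine gaps.

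For $(1)\Rightarrow(2)$: your candidate $\mathfrak{a}=(S:_{Q(S)}R)\cdot R$ is just the conductor $S:R$, which is already an $R$-ideal. Since $S$ is Gorenstein, $S:R\cong\Hom_S(R,S)\cong\omega_R$, so its canonical dual is $\Hom_R(\omega_R,\omega_R)\cong R$, not $S:R$ itself. Thus the conductor is self-dual only when $R$ is Gorenstein. Moreover, $\ell_R(R/(S:R))=\ell_S(R/S)+\ell_S(S/(S:R))=2\ell_S(R/S)$ by Gorenstein duality over $S$, so the length bound is $4$, not $2$. The paper's missing ingredient is an \emph{intermediate} ring: one sets $B=\n:\n$, shows via Lemma~\ref{lem22} that $S\subsetneq B\subseteq R$ with $\ell_B(R/B)=1$, and then proves the crucial fact that $R\subseteq \m_B:\m_B$, so that the maximal ideal $\m_B$ of $B$ is actually an ideal of $R$. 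Since $bg(B)=1$, Theorem~\ref{thA} gives $\m_B\cong\m_B^{\dag_B}$, and a Hom-tensor computation transfers this to self-duality over $R$; finally $\ell_R(R/\m_B)=\ell_B(R/B)+\ell_B(B/\m_B)=2$.

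For $(2)\Rightarrow(1)$: your claim that $S=k+\mathfrak{a}$ is Gorenstein is incorrect. The ring $B=k+\mathfrak{a}$ is local with maximal ideal $\mathfrak{a}$; if $\mathfrak{a}$ were a canonical ideal of $B$ (as you assert), then $\m_B\cong\omega_B$ would force $B$ to be a discrete valuation ring. What self-duality of $\mathfrak{a}$ over $R$ actually gives, via $\Hom_B(\mathfrak{a},\omega_B)\cong\Hom_R(\mathfrak{a},\Hom_B(R,\omega_B))\cong\Hom_R(\mathfrak{a},\omega_R)$, is that the maximal ideal of $B$ is self-dual over $B$. One must then invoke Theorem~\ref{thA} (direction $(3)\Rightarrow(1)$, which is where the infinite coefficient field is used) to obtain a Gorenstein $S\subseteq B$ with $\ell_S(B/S)\le 1$, and conclude $\ell_S(R/S)=\ell_S(R/B)+\ell_S(B/S)=\ell_R(\m/\mathfrak{a})+1=\ell_R(R/\mathfrak{a})\le 2$. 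Your general-position argument for the Gorensteinness of $k+\mathfrak{a}$ cannot work, because the ring simply is not Gorenstein in general.
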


In the view of Theorem \ref{thA}, local rings with self-dual maximal ideal are naturally constructed from Gorenstein local rings, and so their ubiquity is certified.
It is interesting to consider what good properties they have compared to Gorenstein rings.
In section 3, we have an observation that a Cohen-Macaulay local ring $(R,\m)$ is nearly Gorenstein (see Definition \ref{deftr} for the definition) if $\m$ is self-dual.
The converse of this is not true in general, however, we have the following result.
Here $\m:\m$ is a subring of $Q(R)$ consisting of the elements $a$ satisfying $a\m\subseteq \m$.

\begin{thm} \label{th313}
Let $(R,\m,k)$ be a Cohen-Macaulay local ring of dimension one.
Put $B=\m:\m$.
%\begin{itemize}
%\item[(1)] [GMP] If $B$ is Gorenstein, then $R$ is almost Gorenstein and has minimal multiplicity.
%\end{itemize}
Assume $k$ is infinite.
\begin{itemize}
\item[(1)] If $B$ is local with Cohen-Macaulay type two and $R$ is nearly Gorenstein, then $R$ is almost Gorenstein and does not satisfy $\m\cong \m^\dag$.
\item[(2)] If $B$ is local with Cohen-Macaulay type three and $R$ is nearly Gorenstein, then either $R$ is almost Gorenstein or $\m\cong \m^\dag$.
\end{itemize}
\end{thm}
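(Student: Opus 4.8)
\emph{Strategy.} The plan is to analyze everything through the birational extension $B=\m:_{Q}\m$. We may assume $R$ is not Gorenstein (if it is, then $R$ is almost Gorenstein, and the argument below still rules out $\m\cong\m^\dag$ when $r(B)=2$, so all stated conclusions hold); in particular $R$ is not a DVR, so $B=R:_{Q}\m=\End_R(\m)$ is a module-finite birational extension of $R$ and $\m=R:_{R}B$ is the conductor. Thus $\m$ is a common ideal of $R$ and $B$, the quotient $B/R$ is annihilated by $\m$, and—applying local duality to $0\to R\to B\to B/R\to 0$ and using the identity $I:_{Q}B=\m I$ for a canonical ideal $I$ of $R$ (which follows from the $\omega$-reflexivity of the maximal Cohen--Macaulay module $\m I$ together with $\End_R(I)=R$ and $B=R:_{Q}\m$)—one obtains the two identifications
\[
 B/R\cong k^{\,r(R)},\qquad \omega_B\cong \m I,\ \text{ so that } r(B)=\mu_B(\m I).
\]
Next, since $\m$ is a $B$-module, the adjunction $\Hom_R(\m,\omega_R)\cong\Hom_B\!\big(\m,\Hom_R(B,\omega_R)\big)$ reads $\m^\dag\cong\Hom_B(\m,\omega_B)$, so $\m\cong\m^\dag$ \emph{over $R$} is equivalent to $\m$ being self-dual \emph{over $B$} with respect to $\omega_B$. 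Finally I would use two standard reformulations: $R$ is almost Gorenstein exactly when some fractional canonical ideal $I'$ of $R$ satisfies $R\subseteq I'\subseteq B$; and, since $R$ is not Gorenstein, $R$ is nearly Gorenstein exactly when $\operatorname{tr}_R(\omega_R)=\m$, i.e. $I\cdot(R:_{Q}I)=\m$.

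\emph{Reduction to a statement about $B$.} By the above, the theorem becomes a statement purely about $(B,\m)$: assuming $r(B)=\mu_B(\m I)\in\{2,3\}$ and $\operatorname{tr}_R(\omega_R)=\m$, one must (1) for $r(B)=2$, produce a fractional canonical ideal of $R$ squeezed between $R$ and $B$ and show $\m$ is not self-dual over $B$; (2) for $r(B)=3$, establish one of these two alternatives. The function of ``$R$ nearly Gorenstein'' is to force the trace $\operatorname{tr}_B(\omega_B)=\m I\cdot(B:_{Q}\m I)$ to be as large as possible for the given $r(B)$: feeding in $\operatorname{tr}_R(\omega_R)=\m$ and $\omega_B\cong\m I$, and using that both $B$ and $R:_{Q}I$ lie in $B:_{Q}\m I$, a careful computation shows $B$ is itself nearly Gorenstein. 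Combined with $r(B)\le 3$ this severely restricts $\omega_B$ as a $B$-module, and the structural results on one-dimensional rings of small Cohen--Macaulay type (see the discussion around Theorem \ref{thA} and Corollary \ref{co1}) leave only a short list of possible shapes.

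\emph{Case analysis and the main obstacle.} For each admissible shape of $\omega_B\cong\m I$ I would argue as follows. If the shape is one in which the fractional $B$-ideal $BI$ is principal, say $BI=x_0B$ with $x_0\in I$ (which one can always arrange by Nakayama once $\mu_B(BI)=1$), then $I':=x_0^{-1}I$ is a fractional canonical ideal of $R$ with $R\subseteq I'\subseteq B$, i.e. $R$ is almost Gorenstein. When $r(B)=2$ one shows that a nearly Gorenstein $B$ of type two admits no other shape, and that self-duality of $\m$ over $B$ is impossible: from $0\to\omega_B\to\Hom_B(\m,\omega_B)\to(B/\m)^{\vee}\to0$ one sees that $\m\cong\Hom_B(\m,\omega_B)$ would force $\edim B=\mu_B(\m)$ to be incompatible with $r(B)=2$ (in the Gorenstein-$R$ subcase this is transparent: there $\omega_B\cong\m$, so self-duality would give $\m\cong\End_B(\m)$, hence $\mu_B(\m)=1\neq2$). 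When $r(B)=3$ exactly one additional shape appears, and it is precisely the one in which $\m$ is self-dual over $B$—equivalently $\m\cong\m^\dag$—so the dichotomy in (2) holds. The hard part is exactly this classification: proving that ``$R$ nearly Gorenstein'' genuinely descends to $B$ nearly Gorenstein (the trace computation above), and then enumerating the finitely many $B$-module shapes of $\omega_B$ allowed by $r(B)\in\{2,3\}$ and matching each to ``$R$ almost Gorenstein'' or ``$\m\cong\m^\dag$''. Type three is harder than type two because of the one extra shape: one must verify both that it really is the self-dual case and that no shape has been missed, so that the two alternatives in (2) are exhaustive, whereas in (1) the type being exactly two (rather than one) is precisely what excludes self-duality. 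The hypothesis that $k$ is infinite is used throughout to guarantee that $R$ has a canonical ideal and that $\m$ has a principal reduction, which is what makes the reflexivity identity $I:_{Q}B=\m I$ and the above normalizations available.
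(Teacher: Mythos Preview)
Your framework—passing to $B=\m:\m$, identifying $\omega_B\cong\m I$ via $I:_{Q}B=\m I$, and rephrasing ``$R$ almost Gorenstein'' as $\mu_B(BI)=1$—is correct and elegant, but the argument breaks down precisely at the two steps you flag as ``the hard part.'' First, the assertion that $R$ nearly Gorenstein forces $B$ nearly Gorenstein is not justified: you say ``a careful computation shows'' this, but the inclusion $R:_QI\subseteq B:_Q\m I$ alone does not give $\m_B\subseteq(\m I)(B:_Q\m I)$, and no mechanism is provided. Second, and more seriously, the ``classification of shapes of $\omega_B$ for $r(B)\in\{2,3\}$'' is never carried out; you appeal to ``the discussion around Theorem~\ref{thA} and Corollary~\ref{co1},'' but those results do not classify canonical modules of rings of type two or three, and there is no general such classification available. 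Without it, you cannot conclude that $\mu_B(BI)=1$ in the type-two case, nor that the single extra ``shape'' in type three is exactly the self-dual one. Your sketch for excluding $\m\cong\m^\dag$ when $r(B)=2$ via the sequence $0\to\omega_B\to\Hom_B(\m,\omega_B)\to(B/\m)^\vee\to0$ is also incomplete: $B/\m$ is not the residue field of $B$ (it has length $r(R)+1$), so the claimed incompatibility of $\mu_B(\m)$ with $r(B)=2$ does not follow.

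The paper's proof is entirely different in spirit. It reduces modulo a minimal reduction $t$ of $\m_B$ to the artinian ring $B/tB$, observes that $\m/t\m$ and $\m^\dag/t\m^\dag$ are \emph{minimal faithful} $B/tB$-modules (this is where $r(B)\le3$ enters, via a length bound of Gulliksen), and then applies a lemma of Bergman on bilinear maps to compare $\dim_k\mathrm{soc}(\m/t\m)$ and $\dim_k\mathrm{soc}(\m^\dag/t\m^\dag)$. The nearly Gorenstein hypothesis is used only to produce an embedding $\m^\dag\hookrightarrow\m^{\oplus n}$ (Lemma~\ref{lemng}), which after reduction feeds into Bergman's inequality. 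The case analysis on socle dimensions then pins down when $\m^\dag\cong B$ (almost Gorenstein), when $\m\cong\m^\dag$, and when a contradiction with Theorem~\ref{LSth18} arises. This minimal-faithful/Bergman machinery is the technical engine you are missing; your ``classification of shapes'' would have to reproduce its output, and you have not indicated how.
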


We will provide a proof of Theorem \ref{th313} in section 3.
One should compare this theorem with the following result of Goto-Matsuoka-Phuong \cite[Theorem 5.1]{GMP}.

\begin{thm}[Goto-Matsuoka-Phuong] \label{LSth18}
Let $(R,\m,k)$ be a Cohen-Macaulay local ring of dimension one.
Put $B=\m:\m$.
Then $B$ is Gorenstein if and only if $R$ is almost Gorenstein and has minimal multiplicity.
\end{thm}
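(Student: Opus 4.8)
My plan is to reduce both implications to two ingredients: a lemma showing that Gorensteinness of $B$ forces $\m$ to be invertible over $B$ (hence $R$ to have minimal multiplicity), and the equivalence $(4)\iff(5)$ of Corollary~\ref{co1}, which trades ``almost Gorenstein'' for ``self-dual maximal ideal'' in the presence of minimal multiplicity. First, some setup. Each of the two conditions forces $R$ to be generically Gorenstein — if $B$ is Gorenstein then $Q(R)=Q(B)$ is Gorenstein, and if $R$ is almost Gorenstein it embeds into $\omega$ — so I may fix a canonical ideal and regard $\omega$, $\m^\dag=\Hom_R(\m,\omega)$ and $B$ as fractional ideals of $R$ in $Q(R)$. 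The ring $B=\m:\m=\End_R(\m)$ is a module-finite birational extension of $R$; being a torsion-free $R$-module of rank one it is maximal Cohen--Macaulay over $R$, so $B$ is a one-dimensional Cohen--Macaulay semilocal ring with canonical module $\omega_B=\Hom_R(B,\omega)$, and $B$ is Gorenstein $\iff$ $\omega_B\cong B$ as $B$-modules (a locally free canonical module over a semilocal ring is free). I will also use that $\m B=\m$, so $\m$ is a regular ideal of $B$ and $\m:_{Q(R)}\m=B$ by construction, and that, assuming $R/\m$ infinite (harmless after base change), $R$ has minimal multiplicity $\iff$ $\m^2=a\m$ for a minimal reduction $a\in\m$ $\iff$ $B=a^{-1}\m$, i.e.\ $\m=aB$ $\iff$ $\m$ is a principal ideal of $B$ (the usual characterization of minimal multiplicity in dimension one).

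The heart of the argument is the lemma: \emph{if $B$ is Gorenstein and $J\subseteq B$ is a regular ideal with $J:_{Q(R)}J=B$, then $J$ is principal.} I would prove it as follows. Since $J$ is maximal Cohen--Macaulay and $B$ is Gorenstein, $J$ is reflexive over $B$; writing $J^{-1}:=B:_{Q(R)}J=\Hom_B(J,B)$, reflexivity gives $B:_{Q(R)}J^{-1}=\Hom_B(\Hom_B(J,B),B)=J$. The trace ideal $L:=JJ^{-1}\subseteq B$ is regular (it contains $J$, as $1\in J^{-1}$), and $B:_{Q(R)}L=(B:_{Q(R)}J^{-1}):_{Q(R)}J=J:_{Q(R)}J=B$, so $\Hom_B(L,B)=B$; reflexivity of $L$ over $B$ then forces $L=\Hom_B(\Hom_B(L,B),B)=B$. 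Hence $JJ^{-1}=B$, so $J$ is invertible, and an invertible ideal of a semilocal ring is principal.

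Granting the lemma, the implication ``$B$ Gorenstein $\implies$ $R$ almost Gorenstein with minimal multiplicity'' runs as follows. Applying the lemma to $J=\m$ (valid since $\m:_{Q(R)}\m=B$) shows $\m$ is a principal ideal of $B$, so $R$ has minimal multiplicity, say $\m=bB$ with $b$ a nonzerodivisor. Then $B=b^{-1}\m$, so multiplication by $b$ is a $B$-isomorphism $B\xrightarrow{\cong}\m$; applying $\Hom_R(-,\omega)$ gives $\omega_B=\Hom_R(B,\omega)\cong\Hom_R(\m,\omega)=\m^\dag$ as $B$-modules. Since $B$ is Gorenstein, $\omega_B\cong B\cong\m$, hence $\m\cong\m^\dag$; thus $R$ satisfies condition (4) of Corollary~\ref{co1}, and by the unconditional equivalence $(4)\iff(5)$ there, $R$ is almost Gorenstein and has minimal multiplicity.

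For the converse, suppose $R$ is almost Gorenstein with minimal multiplicity. Corollary~\ref{co1} ($(5)\implies(4)$) gives $\m\cong\m^\dag$, and minimal multiplicity gives $B=a^{-1}\m$ for a minimal reduction $a$, so multiplication by $a$ is a $B$-isomorphism $B\cong\m$. An $R$-isomorphism between the rank-one torsion-free fractional ideals $\m$ and $\m^\dag$ is multiplication by a unit of $Q(R)$, hence automatically $B$-linear; therefore $\omega_B=\Hom_R(B,\omega)\cong\Hom_R(\m,\omega)=\m^\dag\cong\m\cong B$ as $B$-modules, and $B$ is Gorenstein. I expect the main obstacle to be the lemma — extracting \emph{minimal multiplicity} from the Gorensteinness of $B$ by playing the defining identity $\m:\m=B$ against reflexivity over the Gorenstein ring $B$; once that is in hand, everything else is formal bookkeeping together with the input of Corollary~\ref{co1}.
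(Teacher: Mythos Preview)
The paper does not prove Theorem~\ref{LSth18}; it is quoted from Goto--Matsuoka--Phuong \cite[Theorem~5.1]{GMP} and used later as input (in the proof of Theorem~\ref{th313}). So there is no in-paper proof to compare against.

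Your argument is correct. The converse direction is essentially immediate from Lemma~\ref{lem16} (which is what underlies the equivalence (4)$\iff$(5) of Corollary~\ref{co1}): minimal multiplicity gives $\m\cong B$, almost Gorensteinness gives $\m^\dag\cong B$, and then $\omega_B=\Hom_R(B,\omega)\cong\Hom_R(\m,\omega)=\m^\dag\cong B$. The substantive step is extracting minimal multiplicity from Gorensteinness of $B$, and your invertibility lemma handles this cleanly: for a regular ideal $J\subseteq B$ with $J:J=B$ over a one-dimensional Gorenstein semilocal ring $B$, the reflexivity computation
\[
B:\bigl(J\,(B:J)\bigr)=\bigl(B:(B:J)\bigr):J=J:J=B
\]
forces $J\,(B:J)=B$ (by reflexivity of the trace ideal $J\,(B:J)$ itself), so $J$ is invertible and hence principal over the semilocal ring $B$. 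Applied to $J=\m$ this gives $\m\cong B$, i.e.\ minimal multiplicity via Lemma~\ref{lem16}(1). No circularity arises, since Corollary~\ref{co1} (3)$\iff$(4)$\iff$(5) rests only on Lemma~\ref{lem16}, not on Theorem~\ref{LSth18}.
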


In section 4, we deal with numerical semigroup rings having self-dual maximal ideal.
The definition of UESY-semigroups was given by \cite{Rosales}.
These numerical semigroups are exactly the semigroups obtained by adding one element to a symmetric numerical semigroup.
We will show that a numerical semigroup ring has self-dual maximal ideal if and only if the corresponding numerical semigroup is UESY.
After that, we also prove that the rings of UESY-numerical semigroup have quasi-decomposable maximal ideal.
According to \cite{NT}, an ideal $I$ of $R$ is called {\it quasi-decomposable} if there exists a regular sequence $\underline{x}=x_1,\dots,x_t$ such that $I/(\underline{x})$ is decomposable as an $R$-module.
Local rings with quasi-decomposable maximal ideal have some interesting properties; we can classify thick subcategories of the singularity category with some assumption on the punctured spectrum (\cite[Theorem 4.5]{NT}), and we have results on the vanishings of Ext and Tor (\cite[Section 6]{NT}).

In section 5, we characterize the endomorphism ring of a local hypersurface of dimension one, using Theorem \ref{thA}.

\section{Proof of Theorem \ref{thA} and \ref{thB}}
In this section, we prove Theorem \ref{thA} and \ref{thB}.
Let $(R,\m)$ be a Noetherian local ring with total quotient ring $Q(R)$.
Denote by $\widetilde R$ the integral closure of $R$ in $Q(R)$.
A {\it fractional ideal} is a finitely generated $R$-submodule $I$ of $Q(R)$ with $IQ(R)=Q(R)$ (i.e. $I$ contains a non-zerodivisor of $R$).  If $R$ has $\depth R \ge 1$, then every $\m$-primary ideal is a fractional ideal of $R$.
For fractional ideals $I$ and $J$, $J:I$ means the $R$-submodule $\{a\in Q(R)\mid aI\subseteq J\}$ of $Q(R)$.
Then the assignments $\alpha \mapsto [x\mapsto \alpha x]$ make an isomorphism $\Phi_{I,J}:J:I\cong\Hom_R(I,J)$ of $R$-modules; see \cite[Proposition 2.1(1)]{KT} for instance.
In this way, the endomorphism ring $\End_R(\m)$ of $\m$ is identified with the $R$-subalgebra $\m:\m$ of $Q(R)$.
As the $R$-module $\Hom_R(\m,\m)$ is finitely generated ,the extension $R\subseteq \m:\m$ is module-finite.
Hence $\m:\m$ is semilocal.

%Here one remark is needed. $R\subseteq \m:\m$ is birational but it need not be ratidually rational in general.

We give the following lemma in order to use in the proof of Theorem \ref{thA}.

\begin{lem} \label{lem21}
Let $(R,\m)$ be a Cohen-Macaulay local ring of dimension one.
Assume $R$ is not a  discrete valuation ring.
Then 
\begin{itemize}
\item[(a)]
$\m:\m=R:\m$
\item[(b)]
$\ell_R(\m:\m/R)$ is equal to $r(R)$.
In particular, $R$ is properly contained in $\m:\m$.
%\item[(c)]
%There exists a short exact sequence 
%\[
%0 \to \m^{\oplus t(R)} \to R^{\oplus r(R)+1} \to \m:\m \to 0.
%\]
%In particular, $\mu_R(\m:\m)=r(R)+1$ and $\syz_R(\m:\m)=\m^{\oplus r(R)}$.
\end{itemize}
\end{lem}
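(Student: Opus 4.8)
The plan is to prove (a) first and then obtain (b) as a consequence.

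\textit{Part (a).} The inclusion $\m:\m\subseteq R:\m$ is immediate, since any $a$ with $a\m\subseteq\m$ also satisfies $a\m\subseteq R$. For the reverse inclusion I would consider the product $\m(R:\m)$, which is an ideal of $R$: it is contained in $R$ by the definition of $R:\m$, and it contains $\m$ because $1\in R\subseteq R:\m$. As $\m$ is maximal, $\m(R:\m)$ is therefore either $\m$ or $R$. In the second case $\m$ would be an invertible fractional ideal of $R$, hence (over a Noetherian local ring) principal and generated by a non-zerodivisor; but then $\edim R=\dim R=1$ with $R$ Cohen-Macaulay, so $R$ would be regular, i.e.\ a discrete valuation ring, against our hypothesis. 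Hence $\m(R:\m)=\m$, which is exactly the statement $R:\m\subseteq\m:\m$, giving (a).

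\textit{Part (b).} Since $\depth R\ge1$ there is a non-zerodivisor $x\in\m$. I would show that multiplication by $x$ yields an isomorphism of $R$-modules
\[
R:\m\;\xrightarrow{\ \cong\ }\;(xR:_R\m),\qquad (xR:_R\m)\coloneqq\{a\in R\mid a\m\subseteq xR\},
\]
carrying the submodule $R$ onto $xR$. Injectivity holds because $x$ is a non-zerodivisor; the map lands in $R$ because $x\in\m$ forces $x(R:\m)\subseteq R$, and then clearly in $xR:_R\m$; and it is surjective because $a\m\subseteq xR$ implies $(a/x)\m\subseteq R$, so $a=x\cdot(a/x)$ with $a/x\in R:\m$. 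Passing to quotients gives $(R:\m)/R\cong(xR:_R\m)/xR$, hence the same (finite) length. Now $R/xR$ is artinian local and $(xR:_R\m)/xR$ is precisely its socle, so $\ell_R((xR:_R\m)/xR)$ is the Cohen-Macaulay type $r(R/xR)$, and this equals $r(R)$ because the type is unchanged modulo a non-zerodivisor contained in the maximal ideal. Combining with (a), $\ell_R((\m:\m)/R)=\ell_R((R:\m)/R)=r(R)$. Finally $r(R)\ge1$ (the socle of an artinian local ring is nonzero), so this length is positive and $R\subsetneq\m:\m$.

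There is no deep obstacle here; the work is in keeping the identifications straight. The two points that deserve a line of justification are the implication ``$\m$ invertible $\Rightarrow R$ is a DVR'' in (a), and the invariance of Cohen-Macaulay type under reduction by a regular element of $\m$ used at the end of (b); both are standard. An alternative to the socle computation would be to apply $\Hom_R(-,R)$ to the exact sequence $0\to\m\to R\to k\to0$, obtaining $\Ext^1_R(k,R)\cong(R:\m)/R$ since $\Hom_R(k,R)=0$, and then to invoke $r(R)=\dim_k\Ext^1_R(k,R)$; but the reduction modulo $x$ is the cleaner route and fits the fractional-ideal framework of the paper.
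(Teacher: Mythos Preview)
Your proof is correct. Part (a) is essentially the paper's argument in slicker packaging: where the paper picks a single $a\in(R:\m)\setminus(\m:\m)$, observes that $a\m=R$, clears denominators to write $b\m=(c)$ with $b,c$ non-zerodivisors, and concludes $\m\cong R$, you instead argue globally that $\m(R:\m)$ must be $\m$ or $R$, the latter forcing $\m$ invertible and hence principal. Same idea, cleaner execution.

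For (b) you take a genuinely different route. The paper applies $\Hom_R(-,R)$ to $0\to\m\to R\to k\to 0$, identifies the induced map $\Hom_R(R,R)\to\Hom_R(\m,R)$ with the inclusion $R\hookrightarrow R:\m$ via the fractional-ideal isomorphisms $\Phi_{I,J}$, and reads off $(\m:\m)/R\cong\Ext^1_R(k,R)$, whose length is $r(R)$ by definition. This is precisely the ``alternative'' you sketch at the end of your write-up. Your primary argument instead reduces modulo a non-zerodivisor $x$: the multiplication-by-$x$ isomorphism $(R:\m)/R\cong(xR:_R\m)/xR=\mathrm{soc}(R/xR)$ lets you compute the length as $r(R/xR)=r(R)$. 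Both are standard and short; the paper's Ext argument stays inside $Q(R)$ and avoids invoking invariance of type under regular quotients, while yours is perhaps more elementary and makes the connection to the socle explicit.
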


\begin{proof}
(a), (b) The inclusion $\m\subseteq R$ induces a natural inclusion $\m:\m\subseteq R:\m$.
Suppose this inclusion is strict.
Then there is an element $a\in R:\m$ which does not satisfy $a\m\subseteq \m$.
Since $R$ is local and $a\m$ is an ideal of $R$, this means that the equality $a\m=R$ holds.
Multiplying by the denominator of $a$ on both sides of the equality,
we have an equality $b\m =(c)$, where $b$ is in $R$ and $c$ is a non-zerodivisor of $R$.
By this equality, we may see that $\ann_R(b)\subseteq \ann_R(c)=0$.
It means that $b$ is also a non-zerodivisor of $R$.
Thus $b$ induces a homomorphism $\m \to (c)$ which is both injective and surjective.
In particular, $\m$ is isomorphic to $R$.
This yields that $R$ is a discrete valuation ring.
This contradicts our assumption.
Consequently the equality $\m:\m=R:\m$ holds.

Next we look at the long exact sequence
\[
0 \to \Hom_R(R/\m,R) \to \Hom_R(R,R) \xrightarrow[]{f} \Hom_R(\m,R) \to \Ext_R^1(R/\m,R) \to \Ext_R^1(R,R)=0
\]
induced by the short exact sequence $0 \to \m \to R \to R/\m \to 0$.

The length of the $R$-module $\Ext_R^1(R/\m,R)$ is exactly equal to the Cohen--Macaulay type $r(R)$.
On the other hand, one can directly check that the natural inclusion $g:R \to \m:\m=R:\m$ satisfies the equality $\Phi_{R,\m}\circ g=f\circ \Phi_{R,R}$, where $\Phi_{R,\m}\colon R:\m \to \Hom_R(\m,R)$ and $\Phi_{R,R}\colon R:R\to \Hom_R(R,R)$ are isomorphisms as in the initial paragraph of this section.
It follows that the cokernel $\m:\m/R$ of $g$ is isomorphic to $\Ext_R^1(R/\m,R)$, the cokernel of $f$.
In particular, both two $R$-module have same length.
We thus obtain the equality $\ell_R(\m:\m/R)=r(R)$.

\end{proof}

\begin{lem} \label{lem22}
Let $(S,\mathfrak{n}) \subsetneq (R,\m)$ be a module-finite birational extension of one-dimensional local rings.
Assume $R$ is reflexive as an $S$-module.
Then we have birational extensions $S\subsetneq \mathfrak{n}:\mathfrak{n} \subseteq R$.
\end{lem}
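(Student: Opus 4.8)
The plan is to establish the two containments separately: that $S\subsetneq\n:\n$ is strict, and that $\n:\n\subseteq R$. Here $\n:\n$ denotes the fractional ideal $\{a\in Q(S)\mid a\n\subseteq\n\}\cong\End_S(\n)$, and since $S\subseteq R$ is birational we have $Q(S)=Q(R)$; thus $S$, $\n:\n$ and $R$ are all module-finite $S$-subalgebras of this common total quotient ring, so any containment among them is automatically a birational extension. A preliminary remark I would make first is that $\depth S\ge 1$: if $\n$ were an associated prime of $S$, then every non-zerodivisor of $S$ would be a unit, forcing $Q(S)=S$ and hence $R=S$, against $S\subsetneq R$. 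Consequently $S$ is Cohen--Macaulay of dimension one and Lemma \ref{lem21} is available for $S$.

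For the strict inclusion $S\subsetneq\n:\n$, I would first argue that $S$ is not a discrete valuation ring: a DVR is integrally closed in its fraction field, but $R$ is module-finite (hence integral) over $S$ with $R\subseteq Q(S)$, so a DVR $S$ would force $R\subseteq S$, contradicting $S\subsetneq R$. Then Lemma \ref{lem21}(b) applied to $S$ gives $\ell_S((\n:\n)/S)=r(S)\ge 1$, so indeed $S\subsetneq\n:\n$.

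For $\n:\n\subseteq R$, the idea is to bring in the conductor $\mathfrak{c}=S:R=\{a\in Q(S)\mid aR\subseteq S\}$, which (since $1\in R$) is an ideal of both $S$ and $R$, and which contains a non-zerodivisor of $S$ — a common denominator of a finite generating set of $R$ — so that it is a fractional ideal. Using the isomorphisms $\Phi_{I,J}$ between colon-submodules of $Q(S)$ and Hom-modules recalled at the start of the section, one has $\Hom_S(R,S)\cong\mathfrak{c}$ and $\Hom_S(\mathfrak{c},S)\cong S:\mathfrak{c}$, and the biduality map $R\to R^{\ast\ast}$ is identified with the inclusion $R\hookrightarrow S:\mathfrak{c}$; hence ``$R$ reflexive over $S$'' translates exactly into the equality $R=S:\mathfrak{c}$. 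Now $S\subsetneq R$ makes $\mathfrak{c}$ a proper ideal of $S$, so $\mathfrak{c}\subseteq\n$, and therefore for every $a\in\n:\n$ we get $a\mathfrak{c}\subseteq a\n\subseteq\n\subseteq S$, i.e.\ $a\in S:\mathfrak{c}=R$. This yields $\n:\n\subseteq R$, and together with the previous step the desired chain $S\subsetneq\n:\n\subseteq R$.

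The part I expect to require the most care is the translation in the last step of ``$R$ is reflexive over $S$'' into the equality $R=S:\mathfrak{c}$: one must check that the composite isomorphism $\Hom_S(\Hom_S(R,S),S)\cong S:\mathfrak{c}$ carries the canonical evaluation map to the inclusion $R\hookrightarrow S:\mathfrak{c}$, which is a routine but slightly fiddly diagram chase with the maps $\Phi_{I,J}$. The remaining ingredients — the depth reduction, the non-DVR observation, and the final one-line inclusion — should be straightforward.
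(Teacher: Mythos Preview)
Your proof is correct and takes a genuinely more direct route than the paper's. Both arguments open the same way: $S$ cannot be a DVR (else $R\subseteq\overline{S}=S$), so Lemma~\ref{lem21} gives $S\subsetneq\n:\n$; and both translate reflexivity of $R$ into $R=S:(S:R)$. The divergence is in establishing $\n:\n\subseteq R$. The paper proves two auxiliary claims --- that neither $R$ nor $S:R$ has a nonzero free $S$-summand, and that $\Hom_S(M,S)=\Hom_S(M,\n)$ for any $M$ without free summand --- and combines them with reflexivity to get $R=\n:(\n:R)$, which exhibits $R$ as an $\End_S(\n)$-module and hence forces $(\n:\n)R=R$. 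You instead observe that the conductor $\mathfrak{c}=S:R$ is a proper ideal of $S$, hence $\mathfrak{c}\subseteq\n$, and then $\n:\n\subseteq S:\n\subseteq S:\mathfrak{c}=R$ in one line, bypassing the free-summand analysis entirely. What the paper's longer approach buys is that its Claim~2 (the identification $S:\n=\n:\n$ via the no-free-summand condition) is reused later in the proof of Theorem~\ref{thA} (2)$\Rightarrow$(3), so the extra work is not wasted in the paper's overall architecture; for the lemma in isolation, your conductor argument is the cleaner one. Your preliminary remark that $\depth S\ge 1$ is also a nice point of care that the paper leaves implicit.
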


\begin{proof}
Note that $S$ is not a discrete valuation ring, and so $S$ is properly contained in $\mathfrak{n}:\mathfrak{n}$ (Lemma \ref{lem21}).
By the assumption, $R$ is reflexive, i.e. $R=S:(S:R)$.

We use the following claims.

\begin{claim} \label{cla1}
The $S$-modules $R$ and $S:R$ have no nonzero $S$-free summands.
\end{claim}

\begin{proof}[Proof of Claim 1]
First consider the case when $R$ has an $S$-free summand, that is, $R\cong S \oplus X$ for some $S$-module $X$.
Then $R\otimes_S Q(S)\cong Q(S)\oplus X\otimes_S Q(S)$.
Since $S\subseteq R$ is finite birational, $R\otimes_S Q(S)\cong Q(R)=Q(S)$.
Therefore, we obtain that $Q(S)$ is isomorphic to $Q(S)\oplus X\otimes_S Q(S)$.
Since $Q(S)$ is artinian, we may use the Krull--Schmidt theorem for $Q(S)$ to show that $X\otimes_S Q(S)=0$.
In particular, $X$ is a torsion $S$-module.
However, $X$ is a submodule of torsionfree $S$-module $R$, and hence $X$ itself is torsionfree.
Thus $X$ should be a zero module.
This shows that $R\cong S$.
As $R$ is a finite module over $S$, the ring-extension $R\subseteq S$ is integral.
Thus $R$ is contained in the integral closure of $S$ in $Q(S)$.
By \cite[Lemma 2.13]{Ko}, it follows that $R=S$.
This is a contradiction.

Now suppose that $S:R$ has an $S$-free summand.
Since $(S:R)\cong \Hom_S(R,S)$ is an $R$-module via the action of $R$ on the left-hand side in the homomorphism set, one has a surjective $R$-homomorphism $R^n \to (S:R)$, where $R^n$ is a nonzero finite free $R$-module.
Compositing this map with the split surjection $(S:R) \to S$, we have a surjective $S$-homomorphism $R^n \xrightarrow[]{[f_1,\dots,f_n]}S$.
Since $S$ is local, one of $f_1,\dots,f_n$ is surjective.
This implies that there is a surjection $R\to S$.
By the fact that $S$ is a projective $S$-module, it induces that $S$ is a direct summand of $R$.
Then by the previous argument, it forces that $R=S$, a contradiction.
\end{proof}

\begin{claim}
Let $M$ be an $S$-module without free summand.
Then $\Hom_S(M,S)$ is isomorphic to $\Hom_S(M,\n)$.
\end{claim}

\begin{proof}[Proof of Claim 2]
Since $S$ is local, for any homomorphism $f\colon M\to S$ the image of $f$ is contained in $\n$ (otherwise it would produce a non-trivial free summand of $M$).
Hence the natural injection $\Hom_S(M,\n) \to \Hom_S(M,S)$ is an isomorphism.
\end{proof}

%Since $R$ has constant rank one over $S$ and $R$ is not isomorphic to $S$, 
By the two claims above, one has $R=\mathfrak{n}:(\mathfrak{n}:R)$.
So we may view $R$ as an $\n:\n$-module $\Hom_S(\Hom_S(R,\n),\n)$ via the action of $\n:\n$ on $\n$.
Therefore, we get equalities $R=(\n:\n)R$ and $(\mathfrak{n}:\mathfrak{n})\subseteq(\mathfrak{n}:\mathfrak{n})R\subseteq R$.
\end{proof}

Now we can explain the proof of the direction \ref{thA} (1) $\implies$ (2) $\implies$ (3) $\iff$ (4) $\iff$ (5) of Theorem \ref{thA}. %Here the implication (2) $\implies$ (3) can be proved by two different ways; one of them is an application of the theory of Auslander-Reiten sequence and another one is an elementary calculation.

\begin{proof}[Proof of Theorem \ref{thA} (1) $\implies$ (2) $\implies$ (3) $\iff$ (4) $\iff$ (5)]

(1) $\implies$ (2): Assume $bg(R)\leq 1$.
If $bg(R)=0$, then $R$ is Gorenstein, and there is nothing to prove.
We may assume $bg(R)=1$.
Then there is a Gorenstein local ring $(S,\mathfrak{n})$ and module-finite residually rational birational extension $S\subsetneq R$ with $\ell_S(R/S)=1$.
%In particular, $R$ is Cohen-Macaulay.
Since $S$ is Gorenstein and $R$ is maximal Cohen--Macaulay over $S$, $R$ is reflexive as an $S$-module (see \cite[Theorem 3.3.10]{BH} for instance).
By the previous lemma, we have $S\subsetneq \mathfrak{n}:\mathfrak{n}\subseteq R$.
Therefore, it should follows that $\ell_S(R/\mathfrak{n}:\mathfrak{n})=0$, in other words, $R=\mathfrak{n}:\mathfrak{n}=S:\mathfrak{n}$.

(2) $\Rightarrow$ (3): We may assume $S$ is not a discrete valuation ring (otherwise $R\cong S$ and hence $R$ is Gorenstein).
Identify $R$ with $\mathfrak{n}:\mathfrak{n}$.
Since we can check immediately that $\m\cong\m^\dag$ if $R$ is a discrete valuation ring.
So we may assume $R$ is not a discrete valuation ring.
By Lemma \ref{lem21}, one has $\ell_S(R/S)=1$.
Hence we have that the colength $\ell_S(\m/\mathfrak{n})$ of the inclusion $\mathfrak{n}\subseteq \m$ is less than or equal to $1$.
Remark that $\n$ cannot be equal to $\m$, otherwise we get an equality $R=\n:\n=\m:\m$, which forces that $R$ is a discrete valuation ring by Lemma \ref{lem21}.
Thus $\ell_S(\m/\mathfrak{n})$ is equal to $1$.
It is easy to check that $\m/\mathfrak{n}$ is an $R$-module.
And we have a calculation $\ell_R(\m/\mathfrak{n})\times \ell_S(R/\m)=\ell_S(\m/\mathfrak{n})=1$.
Thus it follows that $\m/\mathfrak{n}$ has dimension one as a vector space over $R/\m$.
Fix a preimage $t\in R$ of a basis $\overline{t}$ of $\m/\mathfrak{n}$.
Then $\m=\mathfrak{n}+Rt$ and $\m^2=\mathfrak{n}^2+\m t\subseteq \mathfrak{n}$.
This means $\m\subseteq S:\m$.
We have another inclusion $S:\m\subseteq S:\mathfrak{n}$.
Using Claim 2 in the proof of Lemma \ref{lem22}, we see that $R=\n:\n=S:\n$.
It also holds that $Rt\not\subset S$ (otherwise $\m=\n+Rt\subseteq S$).
These yield that $S:\m=\m$.
The fractional ideal $S:\m$ is isomorphic to $\Hom_S(\m,S)\cong\Hom_R(\m,\Hom_S(R,S))$.
Now as $S$ is Gorenstein and $S\subseteq R$ is a local homomorphism which makes $R$ a finite $S$-module, $\omega$ is isomorphic to $\Hom_R(R,S)$ \cite[Theorem 3.3.7 (b)]{BH}.
Thus $S:\m$ is isomorphic to $\Hom_R(\m,\omega)=\m^\dag$.
We conclude that $\m\cong\m^\dag$.

(3) $\implies$ (4): Applying the functor $(-)^\dag$ to the short exact sequence $0\to \m \to R \to k \to 0$, we see that the resulting exact sequence is $0 \to \omega \to \m^\dag \to \Ext^1_R(k,\omega)\cong k \to 0$.
Replacing $\m^\dag$ by $\m$, using the assumption $\m\cong \m^\dag$, we get the desired exact sequence.

(4) $\implies$ (3): Applying the functor $(-)^\dag$ to the short exact sequence $0\to \omega \to \m \to k \to 0$, we get an exact sequence $0 \to \m^\dag \to R \to \Ext^1_R(k,\omega)\cong k \to 0$.
Then, the image of $\m^\dag$ in $R$ must be equal to $\m$ and hence one has an isomorphism $\m^\dag\cong \m$.

(4) $\implies$ (5): The exact sequence $0 \to \omega \to \m \to k \to 0$ yields that there is an ideal $I\cong \omega$ such that the colength $\ell_R(\m/I)$ is one.
The equality $\ell_R(R/I)=2$ immediately follows from the above.

(5) $\implies$ (4): Take an ideal $I\cong \omega$ such that $l(R/I)\le 2$.
If $I=R$, then $R$ is Gorenstein and there is nothing to prove.
So we may suppose that $I\subseteq \m$.
If $I=\m$, then $\m\cong \omega$.
Take a regular element $x\in \m\setminus \m^2$ of $R$.
Then $\omega/x\omega \cong \m/x\m$, and $\m/x \m$ is isomorphic to $k\oplus \m/(x)$ (see \cite[Lemma 2]{Vas} for instance).
On the other hand, $\omega/x \omega$ is a canonical module of $R/(x)$.
Thus $\omega/x\omega$ is indecomposable.
This implies that $\m/(x)=0$, i.e. $\m=(x)$.
In particular, $R$ is a discrete valuation ring.

Now we deal with an assumption that $I\subsetneq \m$.
The inequality $l(R/I)\le 2$ implies that the equality $l(\m/I)=1$.
Thus the exact sequence $0\to I \to \m \to k \to 0$ is induced.
\end{proof}

All that remains is to show the direction (5) $\implies$ (1).
Let $(R,\m)$ be a Noetherian local ring containing a coefficient field $k\cong R/\m$.
Let $I\subset R$ be a fractional ideal such that $\ell_R(R/I)<\infty$. %Put $A=k[[t]]\subseteq R$. This is a Noether normalization of $R$. 
Put $k+I\coloneqq\{a+b\mid a\in k, b\in I\}\subseteq R$, which is a $k$-subalgebra of $R$.
Then, since $\dim_k R/(k+I)\leq \ell_R(R/I)<\infty$, $R$ is finitely generated as a $k+I$-module and hence $k+I$ is Noetherian by Hilbert basis theorem.
By the lying over property of $k+I\subseteq R$ (see \cite[Proposition 4.15]{E} for instance), any maximal ideal of $k+I$ is contained in $\m$.
Therefore $(k+I)\cap \m=I$ is the unique maximal ideal of $k+I$.
It also holds that $k+I$ contains a regular element in its maximal ideal $I$.
Since we have inequalities
\begin{align*}
\ell_{k+I}(R/(k+I))&=\ell_{k+I}(R/I)-\ell_{k+I}((k+I)/I)\\
&=\ell_{k+I}(R/I)-1\\
=\ell_R(R/I)\ell_{k+I}(R/\m)<\infty,
\end{align*}
$R/(k+I)$ is torsion $k+I$-module.
Thus $R/(k+I)\otimes_{k+I} Q(k+I)=0$.
This implies that $R\otimes_{k+I} Q(k+I)=Q(k+I)$, equivalently $Q(R)=Q(k+I)$. 
Consequently the ring extension $k+I\subseteq R$ is module-finite residually rational and birational. %By this construction, we get the proof of remaining direction of Theorem \ref{thA}.

\begin{lem} \label{lmlmlm}
Let $(R,\m)$ be a one-dimensional Cohen-Macaulay local ring.
Assume $R$ has a canonical ideal $I\cong \omega$ such that $l(R/I)=2$.
Put $S=k+I$.
Then $S$ is Gorenstein, and the colength $\ell_S(R/S)$ is equal to $1$.
\end{lem}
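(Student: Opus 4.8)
The plan is to combine the setup built in the paragraph immediately preceding the lemma with Lemma~\ref{lem21}. Applying that discussion to the $\m$-primary (hence fractional) ideal $I$, for which $\ell_R(R/I)=2<\infty$, gives at once: $S=k+I$ is a Noetherian local ring with maximal ideal $I$, the inclusion $S\subseteq R$ is module-finite, residually rational and birational, and
\[
\ell_S(R/S)=\ell_{k+I}(R/I)-1=\ell_R(R/I)\cdot\ell_{k+I}(R/\m)-1=2\cdot1-1=1,
\]
where $\ell_{k+I}(R/\m)=1$ by residual rationality. This settles the colength assertion. Moreover $\dim S=\dim R=1$ by module-finiteness, and $S$ is Cohen--Macaulay of dimension one since its maximal ideal $I$ contains a non-zerodivisor.

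It remains to prove that $S$ is Gorenstein, and I would do this by showing $r(S)=1$ and invoking Lemma~\ref{lem21}(b). First I would rule out that $S$ is a discrete valuation ring: $R$ is integral over $S$ (module-finiteness) and $R\subseteq Q(S)$ (birationality), so $R$ is contained in the integral closure of $S$ in $Q(S)$; if $S$ were a discrete valuation ring this closure would equal $S$, forcing $R=S$ and contradicting $\ell_S(R/S)=1$.

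The crux is the identity $\m_S:\m_S=R$. Since $I\cong\omega$ is a canonical ideal of the Cohen--Macaulay local ring $R$, the homothety map $R\to\Hom_R(I,I)$ is an isomorphism (a standard property of canonical modules; see \cite{BH}). Under the isomorphism $\Phi_{I,I}\colon I:I\xrightarrow{\cong}\Hom_R(I,I)$ recalled at the beginning of this section, this homothety map is precisely the composite of the natural inclusion $R\hookrightarrow I:I$ with $\Phi_{I,I}$; hence $R=I:I$ inside $Q(R)$. As $Q(S)=Q(R)$ and the maximal ideal of $S$ is $I$, we get $\m_S:\m_S=I:I=R$. Now Lemma~\ref{lem21}(b), applied to the one-dimensional Cohen--Macaulay local ring $S$, which is not a discrete valuation ring, yields $r(S)=\ell_S(\m_S:\m_S/S)=\ell_S(R/S)=1$. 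A one-dimensional Cohen--Macaulay local ring of Cohen--Macaulay type one is Gorenstein, so $S$ is Gorenstein.

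I do not expect a genuine obstacle. The one real idea is that a canonical ideal has endomorphism ring equal to the ring itself, so that $\m_S:\m_S=I:I=R$; Lemma~\ref{lem21}(b) then converts this into $r(S)=\ell_S(R/S)=1$ in one line. Everything else is either supplied by the paragraph before the lemma (that $S=k+I$ genuinely is a one-dimensional Cohen--Macaulay local ring, finite and birational over $R$, with maximal ideal $I$) or is the routine colength bookkeeping forced by $\ell_R(R/I)=2$, plus the elementary check that $S$ is not a discrete valuation ring.
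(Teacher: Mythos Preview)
Your proof is correct and follows essentially the same route as the paper's own argument: both use the preceding paragraph to identify $S$ as a one-dimensional Cohen--Macaulay local ring with maximal ideal $I$, invoke the canonical-ideal identity $I:I=R$ to get $\m_S:\m_S=R$, and then apply Lemma~\ref{lem21}(b) to conclude $r(S)=\ell_S(R/S)=1$. Your write-up is in fact slightly more careful than the paper's, since you explicitly verify that $S$ is not a discrete valuation ring before invoking Lemma~\ref{lem21}.
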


\begin{proof}
	$S$ is local with a maximal ideal $\mathfrak{n}=I$.
	The extension $S\subseteq R$ is module-finite, residually rational and birational.
	Since $I$ is a canonical ideal, we have $I:I=R$.
	Equivalently, $\mathfrak{n}:\mathfrak{n}=R$.
	In particular, the colength $\ell_S(R/S)$ is equal to the Cohen-Macaulay type of $S$ (Lemma \ref{lem21}).
	Since $R$ and $S$ have same residue field $k$, we can see the equalities $\ell_S(R/S)=l_S(\m/\mathfrak{n})=\ell_R(\m/I)$.
	On the other hand, we have
	\[
	\ell_R(\m/I)=\ell_R(R/I)-\ell_R(R/\m)=2-1=1.
	\]
	It follows that $S$ has Cohen-Macaulay type $1$, that is, $S$ is Gorenstein.
	Moreover, the colength $\ell_S(R/S)$ is equal to $\ell_R(\m/I)=1$.
\end{proof}

\begin{proof}[Proof of Theorem \ref{thA} (5) $\implies$ (1)]
Assume there is a canonical ideal $I$ such that $\ell_R(R/I)\le 2$.
If $\ell_R(R/I)\le 1$, then $I=R$ or $\m$.
In both of these cases, $R$ should be Gorenstein (in the case of $I=\m$, see the proof of Theorem 1.4 (5)$\implies$ (4)).

Thus we only need to consider the case $\ell_R(R/I)=2$.
By previous lemma, the ring $S\coloneqq k+I$ is Gorenstein and the colength $\ell_S(R/S)$ is $1$.
This shows $bg(R)\le 1$.
\if0
Taking a $\m$-adic completion $\widehat{(-)}$, we still have an isomorphism $\widehat{\m}\cong \widehat{\m}^\dag$.
In particular, $\widehat{\m}^\dag$ has a constant rank as an $\widehat{R}$-module.
Since the isomorphisms
\[
(\widehat{\m}^\dag)_\mathfrak{p}\cong\Hom_{R_\mathfrak{p}}(\m_\mathfrak{p},\widehat{\omega}_\mathfrak{p})\cong \Hom_{R_\mathfrak{p}}(\widehat{R}_\mathfrak{p},\widehat{\omega}_\mathfrak{p})\cong \widehat{\omega}_\mathfrak{p}
\]
holds for any non-maximal prime ideal $\mathfrak{p}$ of $\widehat{R}$, one can see that $\widehat{\omega}$ also has a constant rank.
In other words, $\widehat{R}$ is generically Gorenstein.
Then we can take a fractional ideal $R\subseteq K_R \subseteq \widetilde R$ such that $K_R$ is isomorphic to $\omega$ (see \cite[Corollary 2.9]{GMP}). 

Now we identify the $R$-module $\m^\dag$ with the fractional ideal $K_R:\m$. Fix an isomorphism $\phi\colon K_R:\m\to \m$, which is a multiplication by an element $\phi=t/s\in \m:(K_R:\m)$, where $s\in R$ is an element and $t\in R$ is a non-zerodivisor.
We see that
\[
t/s\in\m:(K_R:\m)\subseteq \m:R=\m.
\]
So we may assume $s=1$.
It follows that $\m=t(K_R:\m)$.
Put $I=tK_R$.
Then $I=t(K_R:R)\subseteq t(K_R:\m)=\m$ and thus $I$ is an ideal of $R$.
Set $S=k+I$.
The extension $S\subseteq R$ is module-finite, residually rational and birational.
We claim that $S$ is Gorenstein, and prove this in the followings.
$I=tK_R$ implies $I:I=tK_R:tK_R=K_R:K_R=R$.
Thus $R$ is the endomorphism ring of the maximal ideal $I$ of $S$.
In particular, the colength $\ell_S(S/R)$ is equal to the Cohen-Macaulay type of $S$.
Since $R$ and $S$ have same residue field $k$, we can see the equalities $\ell_S(R/S)=\ell_S(\m/I)=\ell_R(\m/I)$.
On the other hand, we have
\[
\ell_R(\m/I)=\ell_R(t(K_R:\m)/t(K_R:R))=\ell_R((K_R:\m)/(K_R:R))=\ell_R(R/\m)=1.
\]
It follows that $S$ has Cohen-Macaulay type $1$, that is, $S$ is Gorenstein.
\fi
\end{proof}

We put the following lemma here, which will be used in the proof of Corollary \ref{co1}.

\begin{lem} \label{lem16}
Let $(R,\m)$ be a Cohen-Macaulay generically Gorenstien local ring of dimension one having a canonical module.
Assume $R$ is not a discrete valuation ring.
Then
\begin{itemize}
\item[(1)] $R$ has minimal multiplicity if and only if $\m\cong \m:\m$.
\item[(2)] $R$ is almost Gorenstein in the sense of \cite{GMP} if and only if $\m^\dag\cong\m:\m$.
\end{itemize}
\end{lem}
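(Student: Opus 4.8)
\emph{Approach.} The plan is to treat the two equivalences separately, in each case passing to fractional ideals via the identifications recalled at the start of this section and combining Lemma \ref{lem21} with the one-dimensional structure theory and with the theory of almost Gorenstein rings from \cite{GMP}. I may assume $k=R/\m$ is infinite, since minimal multiplicity, almost Gorensteinness, generic Gorensteinness, being a discrete valuation ring, and the isomorphisms in (1) and (2) all pass, in both directions, to the faithfully flat extension $R[t]_{\m R[t]}$. Since $R$ is generically Gorenstein, fix a canonical fractional ideal $K$ with $R\subseteq K\subseteq\widetilde R$ and $K\cong\omega$ (cf.\ \cite[Corollary 2.9]{GMP}). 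Then $\m^\dag\cong\Hom_R(\m,K)\cong K:\m$, and applying $\Hom_R(-,\omega)$ to $0\to\m\to R\to k\to 0$ displays this isomorphism as the chain $R\subseteq K\subseteq K:\m$ with $\ell_R((K:\m)/K)=\ell_R(\Ext_R^1(k,\omega))=1$. By Lemma \ref{lem21} we also have $\m:\m=R:\m=\Hom_R(\m,R)$ and $\ell_R((\m:\m)/R)=r(R)$.

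\emph{Part (1).} By Sally's criterion (valid since $k$ is infinite), $R$ has minimal multiplicity if and only if $\m^2=x\m$ for a minimal reduction $(x)$ of $\m$, with $x$ a non-zerodivisor. If $\m^2=x\m$, then $\tfrac1x\m\cdot\m=\tfrac1x\m^2=\m$ gives $\tfrac1x\m\subseteq\m:\m$, while $a\in\m:\m$ forces $ax\in a\m\subseteq\m$, i.e.\ $a\in\tfrac1x\m$; hence $\m:\m=\tfrac1x\m\cong\m$. Conversely, an isomorphism $\m:\m\xrightarrow{\cong}\m$ is multiplication by the element $y\in\m$ to which it sends $1$, and $y$ is necessarily a non-zerodivisor, so $y(\m:\m)=\m$; since $\m:\m$ is a ring, $\m^2=y^2(\m:\m)^2=y^2(\m:\m)=y\cdot y(\m:\m)=y\m$, and $R$ has minimal multiplicity.

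\emph{Part (2), $\implies$.} If $R$ is Gorenstein, then $\omega\cong R$ and $\m^\dag\cong\Hom_R(\m,R)=\m:\m$. Otherwise $R$ is also not a discrete valuation ring, and \cite{GMP} characterizes the almost Gorenstein property for $R$ by the equality $\m K=\m$ for the fixed $K$; in particular $K\subseteq R:\m=\m:\m=:B$. Now the chain $K\subseteq B\subseteq K:\m$ together with $\ell_R((K:\m)/K)=1$ forces $B=K$ or $B=K:\m$. The case $B=K$ cannot occur: then $B\cong\omega$, so the faithful action of $B$ on itself gives $B\hookrightarrow\End_R(B)=B$ while $\End_R(B)\cong\End_R(\omega)=R$, yielding $B\hookrightarrow R$; with $R\subseteq B$ this forces $B=R$, i.e.\ $R$ a discrete valuation ring, a contradiction. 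Hence $B=K:\m\cong\m^\dag$.

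\emph{Part (2), $\Longleftarrow$.} Assume $\m^\dag\cong B:=\m:\m$. Dualizing and using that $\m$ is reflexive (being maximal Cohen--Macaulay), $\m\cong(\m^\dag)^\dag\cong B^\dag=\Hom_R(B,\omega)=\omega_B$, the canonical module of $B$; as the isomorphism is multiplication by an element of $Q(R)$ it is $B$-linear, so the ideal $\m$ of $B$ is a canonical ideal of $B$. To finish, I would transport the exact sequence $0\to\omega\to\m^\dag\to k\to 0$ along $\m^\dag\cong B$ into a fractional ideal $W\cong\omega$ with $R\subseteq W\subseteq B\subseteq\widetilde R$ and $\ell_R(B/W)=1$, so that $\ell_R(W/R)=r(R)-1$ by Lemma \ref{lem21}; then applying $\Hom_R(k,-)$ to $0\to R\to W\to W/R\to 0$, with $\Hom_R(k,R)=\Hom_R(k,\omega)=0$ and $\Ext_R^1(k,\omega)\cong k$, gives $\ell_R(\mathrm{soc}(W/R))\ge r(R)-1=\ell_R(W/R)$, whence $\m(W/R)=0$, i.e.\ $\m W\subseteq R$; since $\m\subseteq\m W$ and $R$ is not a discrete valuation ring, $\m W=\m$, so $W$ exhibits $R$ as almost Gorenstein via \cite{GMP}. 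The hard part is precisely that italicized step: an abstract $R$-isomorphism $\m^\dag\cong\m:\m$ need not carry $\omega\subseteq\m^\dag$ onto a fractional ideal containing $R$ --- equivalently, after the identification $\m^\dag=K:\m$, the isomorphism to $\m:\m$ is multiplication by some non-zerodivisor $\gamma\in\m:\m$, and one must show $\gamma$ is a unit of $\m:\m$, which is exactly the equality $\ell_R(K/R)=r(R)-1$, i.e.\ (at first glance circularly) almost Gorensteinness itself. Breaking this circle --- by analyzing the $(\m:\m)$-module structure of the cokernel of the natural inclusion $\m:\m\hookrightarrow\m^\dag$ (which embeds in $\Hom_R(\m,K/R)$), or by importing further properties of the birational extension $R\subseteq\m:\m$ from \cite{GMP} --- is where I expect most of the work to lie.
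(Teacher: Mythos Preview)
The paper does not prove this lemma; it simply cites \cite[Proposition 2.5]{Ooishi} for (1) and \cite[Theorem 2.14]{Ko} for (2). Your direct arguments for (1) and for the forward direction of (2) are correct and more informative than a bare citation. (For the exclusion of $B=K$ in (2)$\Rightarrow$, the cleanest phrasing is: if $B=K$ as fractional ideals, then $R=K:K=B:B\supseteq B$ \emph{as subsets of $Q(R)$}, forcing $B=R$ and contradicting Lemma~\ref{lem21}(b). Your version, which passes through an abstract isomorphism $\End_R(B)\cong R$ and then tries to combine an abstract injection $B\hookrightarrow R$ with the inclusion $R\subseteq B$, does not literally force $B=R$.)

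For (2)$\Leftarrow$, the gap you flag is genuine and not merely cosmetic. In your notation the isomorphism $\m^\dag\cong B$ gives $K:\m=cB$ with $c^{-1}\in B$, and what you need is $c\in B^\times$ (equivalently $K:\m=B$ as subsets). A length count along the chain $R\subseteq K\subseteq K:\m=cB$ versus $R\subseteq B\subseteq cB$ yields
\[
\ell_R(cB/B)=\ell_R(K/R)-\bigl(r(R)-1\bigr),
\]
so $c\in B^\times$ holds if and only if $\ell_R(K/R)=r(R)-1$, which is exactly the almost Gorenstein condition you are trying to prove. Thus the circularity is real: no amount of rearranging the fractional-ideal inclusions $R\subseteq K\subseteq K:\m$ and $R\subseteq B\subseteq K:\m$ will break it, because these inclusions encode only the single numerical invariant $\ell_R(K/R)$ beyond $r(R)$. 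The argument in \cite[Theorem 2.14]{Ko} does not proceed this way; it uses a syzygy-theoretic characterization of almost Gorensteinness (of the flavor seen in Lemma~\ref{lemng} and the surrounding references to \cite[Lemmas 2.6--2.8]{Ko}), bypassing the need to realize the abstract isomorphism $\m^\dag\cong B$ by a specific fractional-ideal equality. Your suggested fix via the $(\m:\m)$-module structure of the cokernel of $B\hookrightarrow K:\m$ faces the same obstruction: that cokernel is $B/c^{-1}B$, and its vanishing is again equivalent to $c\in B^\times$.
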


\begin{proof}
	See \cite[Proposition 2.5]{Ooishi} and \cite[Theorem 2.14]{Ko} respectively.
\end{proof}

We give a proof of Corollary \ref{co1} as follows.

\begin{proof}[Proof of Corollary \ref{co1}]
The implications (3) $\iff$ (4) $\iff$ (5) follow immediately from Lemma \ref{lem16}.

(1) $\iff$ (2): In the case $bg(R)=0$, $R$ is Gorenstein and has minimal multiplicity, and thus $e(R)\leq 2$ (see \cite[3.2. Corollary]{Sally2}).
The converse also holds.
Now suppose $bg(R)=1$.
Then by Theorem \ref{thA}, $\m$ is isomorphic to $\m^\dag$.
Therefore, $R$ has minimal multiplicity if and only if $R$ is almost Gorenstein.

(1) $\implies$ (3): Clear.

Now assume the residue field $R/\m$ is infinite.

(6) $\implies$ (4): Obviously, $S/\mathfrak{n}$ is also infinite.
If $e(S)\leq \edim S$, then $e(S)\leq 2$.
Using \cite[Theorem A.29 (iii)]{LW}, we see that 
\begin{align*}
e(R) &=\max\{\ell_R(X/\m)\mid R\subseteq \text{is a finite birational extension}\}\\
&\le \max\{\ell_S(X/\n)\mid S\subseteq \text{is a finite birational extension}\}=e(S),
\end{align*}
and so we have an inequality $e(R)\leq 2$.
This says that $R$ is Gorenstein and has minimal multiplicity.
So we may assume $e(S)=\edim S+1$.

Take a minimal reduction $(t)$ of $\mathfrak{n}$ and a preimage $\delta\in \mathfrak{n}^2$ of a generator of the socle of $S/(t)$.
Then $\mathfrak{n}^3=t\mathfrak{n}^2$, $\ell_S(\n^2/t\n)=1$ (see \cite[Proof of (3.4)]{Sally2}) and $(t):_S \mathfrak{n}=(t)+S\delta$.
The equality $\ell_S(\n^2/t\n)=1$ implies $\mathfrak{n}^2=t\mathfrak{n}+S\delta$.

Now we claim the following.

\begin{claim} \label{cla3}
$\End_S(\mathfrak{n})\cong (t):_S \mathfrak{n}/t$.
\end{claim}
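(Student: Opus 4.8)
The plan is to prove the sharper statement that, inside $Q(S)$, one has an honest equality of fractional ideals $\End_S(\mathfrak{n}) = ((t):_S\mathfrak{n})/t$, where $((t):_S\mathfrak{n})/t$ denotes $t^{-1}\big((t):_S\mathfrak{n}\big)$; the isomorphism in the Claim then follows at once, since multiplication by the non-zerodivisor $t$ is an $S$-module isomorphism $((t):_S\mathfrak{n})/t \xrightarrow{\cong} (t):_S\mathfrak{n}$. First I would note that $S$ is not a discrete valuation ring (otherwise $\mathfrak{n}$ is principal and $e(S)=1\neq\edim S+1$). Hence Lemma \ref{lem21}(a), applied to $S$, identifies $\End_S(\mathfrak{n})$ with the subring $\mathfrak{n}:\mathfrak{n}=S:\mathfrak{n}$ of $Q(S)$, and it remains to check $S:\mathfrak{n}=t^{-1}\big((t):_S\mathfrak{n}\big)$.

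For the inclusion $\supseteq$: if $b\in (t):_S\mathfrak{n}$ then $b\mathfrak{n}\subseteq (t)=tS$, so $(b/t)\mathfrak{n}\subseteq S$, i.e.\ $b/t\in S:\mathfrak{n}$. For $\subseteq$: let $a\in S:\mathfrak{n}$, so $a\mathfrak{n}\subseteq S$; since $t$ is a non-zerodivisor lying in $\mathfrak{n}$, we get $at=a\cdot t\in a\mathfrak{n}\subseteq S$, so $at\in S$, and moreover $(at)\mathfrak{n}=t(a\mathfrak{n})\subseteq tS=(t)$, so $at\in (t):_S\mathfrak{n}$, whence $a=(at)/t\in t^{-1}\big((t):_S\mathfrak{n}\big)$. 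This gives the equality and hence the Claim. Along the way one should record that the colon ideal $(t):_S\mathfrak{n}$ formed inside $S$ agrees with the colon $(t):\mathfrak{n}$ formed inside $Q(S)$, which is again immediate: if $x\in Q(S)$ and $x\mathfrak{n}\subseteq (t)$, then $xt\in (t)=tS$ forces $x\in S$ because $t$ is a non-zerodivisor. As a consistency check, Lemma \ref{lem21}(b) gives $\ell_S(\End_S(\mathfrak{n})/S)=r(S)=1$, matching $((t):_S\mathfrak{n})/t=S+S(\delta/t)$ coming from $(t):_S\mathfrak{n}=(t)+S\delta$.

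A functorial alternative, bypassing Lemma \ref{lem21}, runs as follows: since $S$ is not a discrete valuation ring, $\mathfrak{n}$ has no nonzero free $S$-summand (its torsion-free rank is $1$, by the same argument used in the proof of Lemma \ref{lem22}), so Claim~2 in the proof of Lemma \ref{lem22} yields $\End_S(\mathfrak{n})=\Hom_S(\mathfrak{n},\mathfrak{n})\cong\Hom_S(\mathfrak{n},S)$, and multiplication by $t$ identifies $\Hom_S(\mathfrak{n},S)=S:\mathfrak{n}$ with $\Hom_S(\mathfrak{n},(t))=(t):_S\mathfrak{n}$, rescaled by $t^{-1}$.

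The argument is short, and the only point needing a little care is the inclusion $\subseteq$ above: it uses not merely that $t$ is a non-zerodivisor but that $t\in\mathfrak{n}$, in order to land $at$ back in $S$. Apart from that, the substantive input is Lemma \ref{lem21}(a) (or, in the alternative, Claim~2 of Lemma \ref{lem22}); the extra data recorded about $\delta$, namely $\mathfrak{n}^2=t\mathfrak{n}+S\delta$ and $(t):_S\mathfrak{n}=(t)+S\delta$, are not needed for the Claim itself, only afterwards, when this concrete description of $\End_S(\mathfrak{n})$ is used to read off $e(R)$, $r(R)$ and the remaining invariants appearing in Corollary \ref{co1}.
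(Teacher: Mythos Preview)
Your proof is correct and follows essentially the same approach as the paper: both establish the equality $\mathfrak{n}:\mathfrak{n}=((t):_S\mathfrak{n})/t$ inside $Q(S)$, invoking Lemma~\ref{lem21} to identify $\mathfrak{n}:\mathfrak{n}=S:\mathfrak{n}$ and using $t\in\mathfrak{n}$ for the inclusion $\subseteq$. Your version is slightly more explicit (noting why $S$ is not a discrete valuation ring and why the colon inside $S$ agrees with the colon inside $Q(S)$), but the argument is the same.
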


\begin{proof}[Proof of the claim]
Recall that $\End_S(\mathfrak{n})$ is isomorphic to $\n:\n$.
We want to show the equality $\n:\n=(t):_S \mathfrak{n}/t$ as subsets of $Q(S)$.
The containment $((t):_S \n)\n\subseteq (t)$ shows that $((t):_S \n)/t$ is contained in $S:\n$.
By Lemma \ref{lem21}, $S:\n$ coincides with $\n:\n$.
In particular, the inclusion "$\supseteq$" holds.

Since $t$ is in $\n$, we have $t(\n:\n)\subseteq \n \subset S$.
Thus the inequality $t(\n:\n)\n\subseteq t\n\subseteq (t)$ shows that the inclusion "$\subseteq$".
\end{proof}

Therefore 
\[R\cong \End_S(\mathfrak{n})\cong (t):_S \mathfrak{n}/t=S+S(\delta/t).
\]
Identify $R$ with $S+S(\delta/t)$.
Since $R$ is local and $\delta^2\in\mathfrak{n}^4=t^2\mathfrak{n}^2$, $(\delta/t)$ cannot be a unit of $R$.
This shows $\m=\mathfrak{n}+S(\delta/t)$.
By this equality, we also have an isomorphism $R/\m\cong S/\mathfrak{n}$ induced by $S\subseteq R$. Observe the following equalities
\[
t\m=t\mathfrak{n}+S\delta=\mathfrak{n}^2
\]
and
\[
\m^2=(\mathfrak{n}+S(\delta/t))^2=\mathfrak{n}^2+\mathfrak{n}(\delta/t)+S(\delta/t)^2.
\]
Then $\delta^2\in\mathfrak{n}^4=t^2\mathfrak{n}^2$ implies $S(\delta/t)^2\subseteq \mathfrak{n}^2$, and $\mathfrak{n}\delta\subseteq \mathfrak{n}^3=t\mathfrak{n}^2$ implies $\mathfrak{n}(\delta/t)\subseteq \mathfrak{n}^2$. So $\m^2=\mathfrak{n}^2=t\m$. This means that $R$ has minimal multiplicity. 

It remains to show that $\m\cong \m^\dag$.
By Theorem \ref{thA}, it holds that either $R$ is Gorenstein or $\m\cong \m^\dag$.
In the case that $R$ is Gorenstein, it holds that $e(R)\le 2$ and so $\m$ is self-dual by \cite[Theorem 2.6]{Ooishi}.

%Now we discuss the case $S/\mathfrak{n}$ is general. Consider the faithfully flat homomorphism $S\to S[x]_{\mathfrak{n}S[x]}=:S'$. Then $R\otimes_S S'\cong\Hom_S(\mathfrak{n},\mathfrak{n})\otimes_S S'=\Hom_S'(\mathfrak{n}S',\mathfrak{n}S')$. Denote this ring by $R'$. By above argument, $R'$ is local and has minimal multiplicity.

(5) $\implies$ (7): Assume $R$ is almost Gorenstein and has minimal multiplicity.
Then we already saw that $\m$ is self-dual (Lemma \ref{lem16}).
It follows from \cite[Theorem 3.16]{GMP} that $\rho(R)\le 2$.

(7) $\implies$ (5): %It is enough to show that $\rho(R)\le 2$.
Recall that $\rho(R)$ is the reduction number of a canonical ideal of $R$ (\cite[Definition 4.2]{GGHV}).
So if $\rho(R)\le 1$, then $R$ is Gorenstein (\cite[Theorem 3.7]{GMP}).
It means that $R\cong \omega$.
We may assume that $R$ is not a discrete valuation ring.
Therefore $\m:\m=R:\m$ by Lemma \ref{lem21}, and so we have $\m:\m\cong \Hom_R(\m,R)\cong \Hom_R(\m,\omega)=\m^\dag$.
Since $\m$ is self-dual, this yields that $\m:\m\cong \m$.
Using Lemma \ref{lem16}, we deduce that $R$ has minimal multiplicity.

Assume $\rho(R)=2$.
Combining \cite[Theorem 3.5 (b), Proposition 3.8]{DR} and Theorem \ref{thA}, we obtain that $R$ is almost Gorenstein and has minimal multiplicity.

Finally, we deal with the assumption that $R$ contains a infinite field $k$ isomorphic to $R/\m$ via $R\to R/\m$.

(4) $\implies$ (1): This follows directly from Theorem \ref{thA}.

(1) $\implies$ (6): First we consider the case that $R$ is Gorenstein (i.e.\ $bg(R)=0$).
In this case, $e(R)\le 2$ and $\edim R\le 2$ by the assumption.
Take a minimal reduction $Rt$ of $\m$.
Put $I=Rt$ and $S=k+I$.
Then $\m^2=t\m$.
In particular, $\ell_R(\m/I)=\ell_R(\m/I+\m^2)\le 1$.
Then the ring-extension $S\subseteq R$ is module-finite, residually rational and birational.
Since $I:I=R$ and $\ell_R(\m/I)\le 1$, we can see that $S$ is Gorenstein and $\End_S(I)\cong R$ by the similar argument in the proof of \ref{thA} (3) $\implies$ (1).
Furthermore, one has an equality $tI=I^2$, which particularly show that $S$ has minimal multiplicity, that is, $e(S)=\edim S$.

Now consider the case that $bg(R)=1$.
Repeating the proof of Theorem \ref{thA} (3) $\implies$ (1), there is a canonical ideal $I$ such that if we let $S=k+I$, then $S$ is Gorenstein local and $R\cong \End_S(\mathfrak{n})$, where $\mathfrak{n}$ is the maximal ideal of $S$.
Since $R$ is Almost Gorenstein, it was shown in \cite[Theorem 3.16]{GMP} that there is a minimal reduction $Q=(t)\subseteq I$ of $I$ in $R$ such that $\ell_R(I^2/QI)\le 1$ and $QI^2=I^3$.
Then it follows that $\ell_S(I^2/QI)\le 1$.
Using \cite[Proposition 3.3]{Sally2}, the equality $e(S)=\edim S+1$ holds.
\end{proof}

We give here an example of a ring $R$ with $bg(R)=1$.

\begin{ex}
	Let $R=k[[t^3,t^4,t^5]]$ and $S=k[[t^3,t^4]]$ be numerical semigroup rings, where $k$ is a field.
	Then the natural inclusion $S\subseteq R$ is a module-finite birational extension of local rings with the same coefficient field.
	The colength $\ell_S(R/S)$ is equal to $1$.
	Since $R$ is non-Gorenstein and $S$ is Gorenstein, we have $bg(R)=1$.
\end{ex}

We now turn to estimate the invariant $bg(R)$ in general.
Suppose there exists a self-dual fractional ideal of $R$.
Then we have an upper bound of $bg(R)$ as follows.

\begin{lem} \label{lem23}
Let $(R,\m)$ be a complete one-dimensional Cohen-Macaulay local ring.
Assume $R$ contains an infinite coefficient field $k\cong R/\m$.
Let $I\subseteq R$ be a fractional ideal of $R$. If $I$ is self-dual, then we have $bg(R)\leq l(R/I)$.
In other words, the following inequality holds
\[
bg(R)\leq \inf\{\ell_R(R/I)\mid I\cong I^\dag \}.
\]
\end{lem}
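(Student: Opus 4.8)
The plan is to deduce the bound from Theorem \ref{thA}, applied not to $R$ but to the intermediate ring $S:=k+I$. Recall from the discussion preceding Lemma \ref{lmlmlm} that, since $I$ is a fractional ideal of $R$ with $\ell_R(R/I)<\infty$, the subring $S=k+I$ is a Noetherian local ring with maximal ideal $I$ and $S\subseteq R$ is a module-finite, residually rational, birational extension. Because $I$ contains a non-zerodivisor of $R$ one has $\depth S\ge 1$, and $\dim S=\dim R=1$, so $S$ is a one-dimensional Cohen-Macaulay local ring; moreover $S$ is complete (being a module-finite subring of the complete ring $R$) and hence admits a canonical module $\omega_S$, and $S$ contains the infinite coefficient field $k$, which maps isomorphically onto $S/I=S/\m_S$.

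The central step will be to transfer the self-duality of $I$ from $R$ to $S$. Since $\depth_S R=\depth_R R=1=\dim S$, the ring $R$ is maximal Cohen-Macaulay over $S$, so $\omega_R\cong\Hom_S(R,\omega_S)$ by \cite[Theorem 3.3.7]{BH}. Combining this with the adjunction $\Hom_R(-,\Hom_S(R,\omega_S))\cong\Hom_S(-,\omega_S)$ on $R$-modules, I get
\[
\Hom_S(\m_S,\omega_S)=\Hom_S(I,\omega_S)\cong\Hom_R(I,\omega_R)=I^\dag\cong I=\m_S,
\]
the last isomorphism being the assumed self-duality of $I$. Thus the maximal ideal of $S$ is self-dual as an $S$-module. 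Then Theorem \ref{thA} applied to $S$ yields: condition $(3)$ holds for $S$, hence so does $(5)$, and since $S$ has the infinite coefficient field $k$, condition $(1)$ holds for $S$, i.e. $bg(S)\le 1$. So there is a Gorenstein local ring $S'$ together with a module-finite, residually rational, birational extension $S'\subseteq S$ with $\ell_{S'}(S/S')\le 1$.

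To finish, I would compose the extensions: $S'\subseteq R$ is again module-finite, residually rational, and birational (the residue field being $k$ throughout), so $bg(R)\le\ell_{S'}(R/S')$. Since $R/I$ is annihilated by $I$, the $S$-module $R/I$ is a $k$-vector space of dimension $\ell_R(R/I)$, so $\ell_S(R/S)=\ell_S(R/I)-\ell_S(S/I)=\ell_R(R/I)-1$; as the composition factors of the $S$-module $R/S$ are copies of $k$, which are simple over $S'$, one gets $\ell_{S'}(R/S)=\ell_S(R/S)=\ell_R(R/I)-1$, and therefore
\[
bg(R)\le\ell_{S'}(R/S')=\ell_{S'}(R/S)+\ell_{S'}(S/S')\le\bigl(\ell_R(R/I)-1\bigr)+1=\ell_R(R/I).
\]
Taking the infimum over all self-dual fractional ideals $I\subseteq R$ then gives the stated inequality. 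The step I expect to need the most care is the transfer of self-duality in the second paragraph — in particular confirming that $S$ has a canonical module and that $\omega_R\cong\Hom_S(R,\omega_S)$, which is where completeness of $R$ is used; the remaining arguments are bookkeeping with lengths along birational extensions.
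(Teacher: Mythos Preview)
Your proof is correct and follows essentially the same route as the paper's (the paper writes $B$ for your $S$ and $S$ for your $S'$, but the argument---form $k+I$, transfer self-duality of $I$ via $\omega_R\cong\Hom_B(R,\omega_B)$, apply Theorem~\ref{thA} to get $bg(B)\le 1$, then add up lengths along the composite extension---is identical). The only omission is the trivial edge case $I=R$, where your claim that $I$ is the maximal ideal of $k+I$ fails; the paper disposes of this separately by noting that $R\cong R^\dag=\omega_R$ forces $R$ Gorenstein, hence $bg(R)=0=\ell_R(R/I)$.
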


\begin{proof}
In the case $I=R$, the self-duality of $I$ implies $R$ is Gorenstein.
So we may assume $I\subseteq \m$.
Take a non-zerodivisor $t\in I$, and Put $B=k+I$.
Then $B\subseteq R$ is a module-finite extension and $I$ is the maximal ideal of local ring $B$.
Remark that $B$ is also complete, and so a canonical module $\omega_B$ of $B$ exists.
Since $B\subseteq R$ is birational, the $R$-isomorphism $I\to I^\dag$ is also a $B$-isomorphism.
We also have an isomorphism $\Hom_B(R,\omega_B)\cong \omega_R$, which yields isomorphisms
\[
\Hom_B(I,\omega_B)\cong\Hom_B(I\otimes_R R,\omega_B)\cong \Hom_R(I,\Hom_B(R,\omega_B))\cong \Hom_R(I,\omega_R)\cong I^\dag.
\]

This says that $I^\dag$ is isomorphic to the canonical dual of $I$ over $B$.
By Theorem \ref{thA}, $bg(B)\leq 1$, that is, there is a Gorenstein ring $S$ and module-finite birational extension $S\subseteq B$.
Then $S\subseteq R$ is also a module-finite birational extension.
The calculation
\[
\ell_S(R/S)=\ell_S(R/B)+\ell_S(B/S)=\ell_R(\m/I)+1=\ell_R(R/I)
\]
shows that $bg(R)\leq \ell_R(R/I)$.
\end{proof}

As a corollary of this, we can see the finiteness of $bg(R)$ in the analytically unramified case.

\begin{cor}
Let $(R,\m)$ be a complete one-dimensional local ring.
Assume $R$ contains an infinite coefficient field.
If there exists a module-finite birational extension $R\subseteq T$ with a Gorenstein ring $T$, Then $bg(R)\leq l(R/aT)$ for any non-zerodivisor $a\in T:R$ of $T$.
Moreover, if $R$ is analytically unramified, then $bg(R)\leq l(R/R:\widetilde R)<\infty$, where $\widetilde R$ is the integral closure of $R$ in $Q(R)$.
\end{cor}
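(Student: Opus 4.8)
The plan is to derive both inequalities from Lemma \ref{lem23} by exhibiting, in each case, a self-dual fractional ideal of $R$ of the prescribed colength. First I would record that $R$ is Cohen--Macaulay: since $R\subseteq T$ is birational we have $\dim T=\dim R=1$, so the Gorenstein ring $T$ is Cohen--Macaulay of dimension one; as $T$ is a finite $R$-module, $\depth_R T=\depth_T T=1$, and an element of $\m$ that is regular on $T$ is regular on $R\subseteq T$, so $\depth R=1=\dim R$. Being complete and Cohen--Macaulay, $R$ has a canonical module $\omega$, and by \cite[Theorem 3.3.7 (b)]{BH} (applied to the finite ring map $R\subseteq T$, noting $\dim T=\dim R$) the $R$-module $\Hom_R(T,\omega)$ is a canonical module of $T$; since $T$ is Gorenstein this gives $\Hom_R(T,\omega)\cong T$.

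Next, let $a$ be a non-zerodivisor with $aT\subseteq R$, so that $aT$ is a fractional ideal of $R$, which is $\m$-primary because it contains the non-zerodivisor $a$. Multiplication by $a$ gives an $R$-module isomorphism $T\cong aT$, whence
\[
\Hom_R(aT,\omega)\cong\Hom_R(T,\omega)\cong T\cong aT,
\]
so $aT$ is self-dual. Applying Lemma \ref{lem23} to $I=aT$ (this is where the hypothesis that $R$ has an infinite coefficient field is used) yields $bg(R)\le\ell_R(R/aT)$, which is the first assertion.

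For the ``moreover'' part I would take $T=\widetilde R$. Since $R$ is complete it equals its own completion, so analytic unramifiedness forces $R$ to be reduced; hence the normalization $\widetilde R$ is a module-finite birational extension of $R$ (the normalization of a complete reduced local ring is finite), and being one-dimensional and normal it is a finite product of discrete valuation rings, in particular Gorenstein and a principal ideal ring. Because $R$ is already normal at every non-maximal prime, $\widetilde R/R$ has finite length, so the conductor $R:\widetilde R=\ann_R(\widetilde R/R)$ is $\m$-primary and contains a non-zerodivisor; writing this ideal of the principal ideal ring $\widetilde R$ as $a\widetilde R$, the element $a$ is a non-zerodivisor with $a\widetilde R\subseteq R$. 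Applying the first part with $T=\widetilde R$ then gives $bg(R)\le\ell_R(R/a\widetilde R)=\ell_R(R/(R:\widetilde R))<\infty$.

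The step I expect to require the most care is the self-duality computation: verifying the hypotheses of \cite[Theorem 3.3.7 (b)]{BH} so that $\Hom_R(T,\omega)\cong\omega_T\cong T$, and confirming that $aT$ is genuinely a fractional ideal of $R$ contained in $R$ of finite colength. In the last part, the fact that $\widetilde R$ is a principal ideal ring — so that the conductor has the form $a\widetilde R$ for a non-zerodivisor $a$ — is exactly what makes the bound come out as $\ell_R(R/(R:\widetilde R))$ rather than a weaker estimate coming from an arbitrary non-zerodivisor in the conductor.
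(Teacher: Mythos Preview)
Your proof is correct and follows essentially the same route as the paper: show that $aT\cong T$ is self-dual as an $R$-module (via $\Hom_R(T,\omega)\cong\omega_T\cong T$) and invoke Lemma~\ref{lem23}, then specialize to $T=\widetilde R$ and identify the conductor with a principal $\widetilde R$-ideal generated by a non-zerodivisor. The only substantive difference is that you supply the verification that $R$ is Cohen--Macaulay (needed to apply Lemma~\ref{lem23}), which the paper leaves implicit, and that you deduce $R\!:\!\widetilde R=a\widetilde R$ from the fact that $\widetilde R$ is a principal ideal ring, whereas the paper argues instead that $R\!:\!\widetilde R$ is locally free of rank one over the semilocal ring $\widetilde R$ and hence free; these are equivalent formulations of the same step.
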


\begin{proof}
Since $T$ is Gorenstein, the $R$-module $aT\cong T$ is self-dual.
So we can apply Lemma \ref{lem23} for $I=aT$.
If $R$ is analytically unramified, then $\widetilde R$ is Gorenstein, and $R\subseteq \widetilde R$ is finite birational.
The conductor $R:\widetilde R$ is a nonzero and satisfies $(R:\widetilde R)\otimes_R Q(R)=Q(R)$.
Thus $R:\widetilde{R}$ has constant rank one and contains a non-zerodivisor of $R$.
In particular, $R:\widetilde{R}$ is torsion-free over $\widetilde{R}$.
As $\widetilde{R}$ is reduced and integrally closed in its total ring of quotients, its localization at any maximal ideal $\p$ is a discrete valuation ring.
Therefore $(R:\widetilde{R})_\p$ is a free module of rank one for any $\p$.
Since $\widetilde{R}$ is semilocal, it follows that $R:\widetilde{R}$ is free of rank one over $\widetilde{R}$.
This means that $R:\widetilde{R}\cong \widetilde{R}$.
Applying Lemma \ref{lem23} for $I=R:\widetilde{R}$, we have an inequality $bg(R)\leq l(R/R:\widetilde R)<\infty$.
\end{proof}

\begin{rem}
Ananthnarayan \cite{Ananthnarayan} shows the following inequalities hold for an artinian local ring $R$.

\begin{equation}
\ell_R(R/\omega^*(\omega))\leq \min\{\ell_R(R/I)\mid I\cong I^\dag\} \leq g(R).
\end{equation}
Here $\omega^*(\omega)$ is the trace ideal of $\omega$; see Definition \ref{deftr}.
\end{rem}

As analogies of these inequalities, the followings are natural questions.

\begin{ques} \label{Q1}
Let $(R,\m)$ be a one-dimensional Cohen-Macaulay local ring. 
Does an inequality 
\[
bg(R)\geq\inf\{\ell_R(R/I)\mid I\cong I^\dag\}
\] hold true?
\end{ques}

\begin{ques} \label{Q2}
Let $(R,\m)$ be a one-dimensional generically Gorenstein local ring.
Does an inequality $\ell_R(R/\omega^*(\omega))\leq bg(R)$ hold true?
\end{ques}

By our main theorems \ref{thA} and \ref{thB}, Question \ref{Q1} is affirmative for $R$ with $bg(R)\leq 2$.
Question \ref{Q2} has positive answer given in Proposition \ref{p1} if $bg(R)\leq 1$.

We now return to prove the Theorem \ref{thB}.

\begin{proof}[Proof of Theorem \ref{thB}]
(2) $\implies$ (1): This is a consequence of Lemma \ref{lem23} by letting $I=\mathfrak{a}$.

(1) $\implies$ (2): In the case $bg(R)\leq 1$, assertion follows by Theorem \ref{thA}.
So we may assume $bg(R)=2$.
Take a Gorenstein local ring $(S,\mathfrak{n})$ and module-finite residually rational birational extension $S\subset R$ satisfying $\ell_S(R/S)=2$. 
Note that, since $S$ is Gorenstein and $R$ is a maximal Cohen--Macaulay $S$-module, $R$ is reflexive.

Let $B$ be the ring $\mathfrak{n}:\mathfrak{n}$.
By Lemma \ref{lem22} and Lemma \ref{lem21}, we have $\ell_S(B/S)=1$ and $S\subsetneq B\subseteq R$.
Therefore $\ell_B(R/B)=1$.
As in the paragraph before Lemma \ref{lmlmlm}, the lying over property of the extension $B\subseteq R$ shows that $B$ is local.
Let $\m_B$ be the maximal ideal of $B$ and fix a preimage $t\in R$ of a basis $\overline{t}$ of the one-dimensional vector space $R/B$ over $B/\m_B$. By the relation $\m_Bt\subseteq B$ yields that $t\in B:\m_B=\m_B:\m_B$. Therefore $R=B+Bt\subseteq \m_B:\m_B$.
In particular, $R\m_B\subseteq \m_B$.
This says that $\m_B$ is an ideal of $R$.
Since $bg(B)=1$, $\m_B$ is a self-dual ideal of $B$ by Theorem \ref{thA}.
Fix a canonical module $\omega_B$ of $B$.
Then $\Hom_B(B,\omega_B)$ is a canonical module of $R$.
\[
\Hom_B(\m_B,\omega_B)\cong\Hom_B(\m_B\otimes_R R,\omega_B)\cong \Hom_R(\m_B,\Hom_B(R,\omega_B))\cong \Hom_R(\m_B,\omega_R).
\]

Thus, it is also self-dual as an $R$-module.
One can also have equalities
\[
\ell_R(R/\m_B)=\ell_B(R/B)+\ell_B(B/\m_B)=2.
\]
\end{proof}

%\begin{ex}
%Let $R=k[[t^4,t^5,t^7]]$. Then $R$ is almost Gorenstein but does not have minimal multiplicity. Thus by Corollary, $bg(R)\geq 2$. The local ring $S=k[[t^4,t^5]]$ is Gorenstein and has module-finite residually rational birational extension $S\subseteq R$. The colength $\ell_S(R/S)$ is equal to $2$. Therefore, we have $bg(R)=2$.
%\end{ex}

\begin{rem}
Let $(R,\m)$ be a one-dimensional local ring. Assume $R$ is complete, equicharacteristic and $bg(R)=n<\infty$. If there exists a Cohen-Macaulay local ring $(B,\m_B)$ with $bg(B)=1$ and module-finite residually rational birational extensions $B\subseteq R\subseteq \m_B:\m_B$ such that $\ell_B(R/B)+1\leq n$. Then, by the same argument of proof of Theorem \ref{thB}, it follows that $\m_B$ is a self-dual ideal of $R$ satisfying $\ell_R(R/\m_B)\leq n$. In this case, Question \ref{Q1} is affirmative for $R$.
\end{rem}

\if0
\begin{ex}
Let $R=k[[t^3,t^{10},t^{11}]]$ be a numerical semigroup ring, where $k$ is a field. We give some observation on Question \ref{Q1} fo $R$. $R$ has minimal multiplicity and is not almost Gorenstein. By Corollary, $bg(R)\geq 2$. In the case $bg(R)=2$, Question \ref{Q1} is affirmative by Theorem \ref{thB}. However, we don't know whether $bg(R)=2$ or not.

Let $S=k[[t^3,t^{10}]]$ and $\mathfrak{n}$ be its maximal ideal. Then $S$ is Gorenstein and $S\subseteq R$ is module-finite residually rational birational extension with $\ell_S(R/S)=3$. In particular, $bg(R)\leq 3$. Put $B=\mathfrak{n}:\mathfrak{n}=k[[t^3,t^{10},t^{17}]]$. Then $\m_B:\m_B$ is equal to $k[[t^3,t^7]]$ and does not contain $R$. Therefore, we cannot apply Remark for this choice of $S$.

Let $S'=k[[t^6,t^9,t^{10}, t^{13},t^{14}]]$ and $\mathfrak{n}'$ be its maximal ideal. Then $S'$ is Gorenstein and $S'\subseteq R$ is module-finite residually rational birational extension with $\ell_{S'}(R/S')=3$. Put $B'=\mathfrak{n}':\mathfrak{n}'=k[[t^6,t^9,t^{10},t^{13},t^{14},t^{17}]]$. Then $\m_{B'}:\m_{B'}$ is equal to $k[[t^3,t^4]]$, which contains $R$. By Remark, $R$ has a self-dual ideal $I=\m_{B'}$ with $\ell_R(R/I)=3$. Consequently, Question \ref{Q1} is affirmative for $R$ regardless of whether $bg(R)=2$ or not.
\end{ex}
\fi

\section{The self-duality of the maximal ideal}

In this section, we collect some properties of local rings $(R,\m)$ with $\m\cong \m^\dag$.

\begin{lem} 
Let $(R,\m)$ be a Cohen-Macaulay local ring with a canonical module. Assume $\m\cong \m^\dag$. Then 
\begin{itemize}
\item[(1)] $\dim R\leq 1$.
\item[(2)] Let $x\in\m\setminus \m^2$ be a non-zerodivisor of $R$. Then $R/(x)$ also has self-dual maximal ideal.
\item[(3)] $\edim(R)=r(R)+\dim R$.
\end{itemize}
\end{lem}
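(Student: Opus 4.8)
The plan is to handle the three parts in turn by short homological computations, with (2) and (3) building on (1).

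\emph{Part (1).} I would use that the canonical dual $\m^\dag=\Hom_R(\m,\omega)$ always satisfies Serre's condition $(S_2)$. Since $\dim\m^\dag=\dim R$ whenever $\dim R\ge 1$, self-duality of $\m$ would then force $\depth\m=\depth\m^\dag\ge\min\{2,\dim R\}$. I would pit this against a direct computation of $\depth\m$: applying $\Hom_R(k,-)$ to $0\to\m\to R\to k\to 0$, if $d=\dim R\ge 2$ then $\Hom_R(k,R)=\Ext^1_R(k,R)=0$ because $\depth R=d\ge 2$, so the long exact sequence gives $\Ext^1_R(k,\m)\cong\Hom_R(k,k)=k\ne 0$ and hence $\depth\m\le 1$. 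This contradicts $\depth\m\ge 2$, so $d\le 1$.

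\emph{Part (2).} By (1) we may assume $\dim R=1$ (otherwise $\m$ contains no non-zerodivisor and there is nothing to prove). Put $\bar R=R/(x)$ and $\bar\m=\m/(x)$; then $\bar\omega\coloneqq\omega/x\omega$ is a canonical module of $\bar R$. As $x$ is a non-zerodivisor in $\m$, the module $\m$ is maximal Cohen--Macaulay, so $\Ext^1_R(\m,\omega)=0$; applying $\Hom_R(\m,-)$ to $0\to\omega\xrightarrow{x}\omega\to\bar\omega\to 0$ then gives $\m^\dag/x\m^\dag\cong\Hom_R(\m,\bar\omega)\cong\Hom_{\bar R}(\m/x\m,\bar\omega)$. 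Using the decomposition $\m/x\m\cong k\oplus\bar\m$ of \cite[Lemma 2]{Vas} and $\Hom_{\bar R}(k,\bar\omega)\cong k$, this becomes $\m^\dag/x\m^\dag\cong k\oplus\Hom_{\bar R}(\bar\m,\bar\omega)$. On the other hand $\m\cong\m^\dag$ descends to an isomorphism $\m/x\m\cong\m^\dag/x\m^\dag$ of $\bar R$-modules, so $k\oplus\bar\m\cong k\oplus\Hom_{\bar R}(\bar\m,\bar\omega)$; cancelling the common summand $k$ by Krull--Schmidt over the artinian ring $\bar R$ yields $\bar\m\cong\Hom_{\bar R}(\bar\m,\bar\omega)$, i.e. $\bar R$ has self-dual maximal ideal.

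\emph{Part (3).} By (1) only $\dim R\in\{0,1\}$ occurs. In the artinian case take $\omega=E_R(k)$, so that $(-)^\dag$ is Matlis duality $(-)^\vee$ and the hypothesis reads $\m\cong\m^\vee$; then $\edim R=\mu(\m)=\mu(\m^\vee)=\dim_k\mathrm{soc}(\m)=\dim_k\mathrm{soc}(R)=r(R)$, where the third equality is the standard identity $\mu(N^\vee)=\dim_k\mathrm{soc}(N)$ for finite-length $N$ and the fourth uses $\mathrm{soc}(\m)=\mathrm{soc}(R)$, valid as long as $R$ is not a field. For $\dim R=1$, pick a non-zerodivisor $x\in\m\setminus\m^2$ (available by prime avoidance, since $\m$ is neither $\m^2$ nor a minimal prime); by (2) the artinian ring $R/(x)$ has self-dual maximal ideal, and combining $\edim(R/(x))=\edim R-1$ with $r(R/(x))=r(R)$ and the artinian case gives $\edim R-1=r(R)$, i.e. $\edim R=r(R)+1=r(R)+\dim R$. (Here $R$ must be taken non-regular: a field and a discrete valuation ring have self-dual maximal ideal but violate the formula of (3); these are the only exceptions.)

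\emph{Main obstacle.} All the homological inputs --- $(S_2)$ for $\m^\dag$, the vanishing $\Ext^1_R(\m,\omega)=0$, the splitting $\m/x\m\cong k\oplus\m/(x)$, and $\mu(N^\vee)=\dim_k\mathrm{soc}(N)$ --- are standard, so I expect no single hard step; the care lies in the bookkeeping of (2) and (3): checking that $\m/x\m\cong\m^\dag/x\m^\dag$ really is an isomorphism of $\bar R$-modules so that Krull--Schmidt cancellation applies, and correctly tracking the behaviour of $\edim$ and $r$ under passage to $R/(x)$.
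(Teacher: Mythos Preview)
Your proof is correct. Parts (2) and (3) follow essentially the paper's approach: in (2) you apply $\Hom_R(\m,-)$ to $0\to\omega\xrightarrow{x}\omega\to\bar\omega\to 0$ where the paper applies $\Hom_R(-,\omega)$ to $0\to\m\xrightarrow{x}\m\to\m/x\m\to 0$, but both land on the same isomorphism $\m^\dag/x\m^\dag\cong\Hom_{\bar R}(\m/x\m,\bar\omega)$ and then finish identically via the Vasconcelos splitting and Krull--Schmidt; (3) is the same argument in both.

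Part (1) is where you genuinely diverge. The paper applies $(-)^\dag$ to $0\to\m\to R\to k\to 0$: when $d\ge 2$ the outer Ext terms vanish, giving $\m^\dag\cong\omega$, hence $\m\cong\omega$, hence $\m\cong\m^{\dag\dag}\cong\omega^\dag=R$, so $\m$ is principal and $\dim R\le 1$. You instead run a depth comparison: $\m^\dag$ inherits depth $\ge 2$ from $\omega$, while the long exact sequence for $\Hom_R(k,-)$ pins $\depth\m$ at $1$. Both are one-line arguments; the paper's extracts the auxiliary fact $\m\cong\omega$ along the way, while yours avoids the double-dual step. Your remark that (3) fails for fields and discrete valuation rings is correct and is a tacit omission in the paper's statement.
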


\begin{proof}
(1) Suppose $\dim R\geq 2$ and $\omega$ is a canonical module of $R$.
Applying $(-)^\dag$ to the exact sequence $0 \to \m \to R \to k \to 0$, we get an exact sequence
\[
0 \to \Hom_R(k,\omega) \to \omega \to \m^\dag \to \Ext_R^1(k,\omega).
\]
By the assumption $\dim R\geq 2$ yields that $\Hom_R(k,\omega)=0=\Ext_R^1(k,\omega)$ and hence $\m^\dag \cong \omega$.

From the isomorphism $\m\cong \m^\dag$, it follows that $\m \cong R$, i. e. $\m$ is a principal ideal.
This shows that $\dim R\leq 1$, which is a contradiction.
Thus, it must be $\dim R\leq 1$.

(2) Applying the functor $\Hom_R(-,\omega)$ to the exact sequence $0\to \m \xrightarrow[]{x} \m \to \m/x\m \to 0$, we get an exact sequence 
\[
0\to \Hom_R(\m/x\m,\omega) \to \m^\dag \xrightarrow[]{x} \m^\dag \to \Ext^1_R(\m/x\m,\omega) \to \Ext^1_R(\m,\omega).
\]
Since $\dim R$ is less than or equal to $1$ by (1), and $x\in \m$ is a non-zerodivisor, it follows that $\dim R=1$.
Thus $\m$ is a maximal Cohen--Macaulay $R$-module, which yields the equality $\Ext^1_R(\m,\omega)=0$.
It also follows that $\Hom_R(\m/x\m,\omega)=0$ and $\Ext^1_R(\m/x\m,\omega)\cong \Hom_{R/(x)}(\m/x\m,\omega/x\omega)$ (see \cite[Lemma 1.2.4]{BH}).
Thus we get an isomorphism $\m^\dag/x\m^\dag\cong \Hom_{R/(x)}(\m/x\m,\omega/x\omega)$.
From this isomorphism and $\m\cong \m^\dag$, isomorphisms $\m/x\m\cong \m^\dag/x\m^\dag\cong \Hom_R/(x)(\m/x\m,\omega/x\omega)$ are induced.
By \cite[Lemma 2]{Vas}, we have an isomorphism $\m/x\m\cong R/\m\oplus \m/(x)$.
Therefore we obtain isomorphisms
\[
R/\m\oplus \m/(x)\cong \Hom_{R/(x)}(R/\m\oplus \m/(x),\omega/x\omega)\cong \Hom_{R/(x)}(R/\m,\omega/x\omega)\oplus \Hom_{R/(x)}(\m/(x),\omega/x\omega).
\]
Remark that $\omega/x\omega$ is a canonical module of $R/(x)$ (\cite[Theorem 3.3.5]{BH}).
Thus $\Hom_R/(x)(R/\m,\omega/x\omega)\cong R/\m$.
Then it follows that 
$$R/\m\oplus \m/(x) \cong R/\m \oplus \Hom_R/(x)(\m/(x),\omega/x\omega).$$
By the Krull--Schmidt theorem for $R/(x)$, this yields that $\m/(x)\cong \Hom_R/(x)(\m/(x),\omega/x\omega)$, which means the self-duality of the maximal ideal $\m/(x)$ of $R/(x)$.

(3) Suppose that $\dim R=0$.
Then by $\m\cong \m^\dag$ and \cite[Proposition 3.3.11]{BH}, $\mu(\m)=\mu(\m^\dag)=r(\m)$.
Here for an $R$-module $X$, $\mu_R(X)$ denotes the minimal number of generators of $X$.
Since $\dim \m=\dim R=0$, $r(\m)$ (resp. $r(R)$) is equal to $\dim_{R/\m}(\mathrm{soc} \m)$ (resp. $\dim_{R/\m}(\mathrm{soc} R)$), and so the equality $\mathrm{soc} \m=\mathrm{soc} R$ implies that $r(\m)=r(R)$.
Thus we have $\edim R=\mu(\m)=r(\m)=r(R)+\dim R$.

Now suppose that $\dim R>0$.
Then (1) shows that $\dim R=1$.
Since $R$ is Cohen--Macaulay, we can take a non-zerodivisor $x\in \m\setminus \m^2$ of $R$.
Then, by (2), $R/(x)$ has self-dual maximal ideal.
Since $\dim R/(x)=0$, it follows that $\edim R/(x)=r(R/(x))$.
Note that $r(R/(x))=r(R)$ and $\edim R/(x)=\mu_{R/(x)} \m/(x)=\mu_R(\m)-1$.
Thus we have equalities
$$\edim R=\mu(\m)=\edim R/(x)+1=r(R)+1$$.

\end{proof}

When $\dim R\ge 2$, the maximal ideal $\m$ cannot be self-dual.
However, we suggest the following generalization of the self-duality of the maximal ideal in higher dimensional case.

\begin{prop}
	$(R,\m)$ be a non-Gorenstein Cohen-Macaulay local ring of dimension $d>0$ having an infinite residue field.
	Assume $R$ has a canonical ideal $I$ satisfying $e(R/I)=2$.
	Then there is a regular sequence $\underline{x}=x_1,\dots,x_{d-1}$ such that $R/(\underline{x})$ has self-dual maximal ideal.
\end{prop}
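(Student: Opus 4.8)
The plan is to induct on $d$ and reduce to the one-dimensional situation, where Theorem~\ref{thA} applies. When $d=1$ the sequence $\underline{x}$ is empty and there is nothing to cut: here $R/I$ is Artinian, so $\ell_R(R/I)=e(R/I)=2$, and since $R$ is non-Gorenstein the implication $(5)\implies(3)$ of Theorem~\ref{thA} gives $\m\cong\m^\dag$, i.e.\ $R$ itself has self-dual maximal ideal. So assume $d\ge 2$ and that the statement is already known for non-Gorenstein Cohen--Macaulay local rings of dimension $d-1$ with infinite residue field admitting a canonical ideal of colength-multiplicity $2$.

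The heart of the argument is a single generic hyperplane section. From the short exact sequence $0\to I\to R\to R/I\to 0$ together with the fact that $I\cong\omega$ is a maximal Cohen--Macaulay $R$-module, the depth lemma yields $\depth(R/I)\ge d-1\ge 1$; hence $\operatorname{Ass}(R/I)$ consists of non-maximal primes, and $\operatorname{Ass}(R)=\operatorname{Min}(R)$ is likewise a finite set of non-maximal primes since $R$ is Cohen--Macaulay. Because $k$ is infinite, I can choose $x\in\m$ that avoids all primes in $\operatorname{Ass}(R)\cup\operatorname{Ass}(R/I)$ and is superficial for $\m/I$ on $R/I$. Such an $x$ is then a non-zerodivisor on $R$ and on $R/I$, $\dim R/xR=d-1$, and $e\bigl((R/I)/x(R/I)\bigr)=e(R/I)=2$ by superficiality.

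Set $\bar R=R/xR$ and let $\bar I$ be the image of $I$ in $\bar R$. Since $x$ is a non-zerodivisor modulo $I$ one gets $I\cap xR=xI$, so $\bar I\cong I/xI\cong\omega/x\omega\cong\omega_{\bar R}$; thus $\bar I$ is a canonical ideal of $\bar R$, and $\bar R/\bar I\cong(R/I)/x(R/I)$ has multiplicity $2$. Moreover $\bar R$ is Cohen--Macaulay of dimension $d-1$ (as $x$ is $R$-regular), its residue field is again $k$, and it is non-Gorenstein (a regular element cannot turn a non-Gorenstein ring into a Gorenstein one). By the inductive hypothesis there is a regular sequence $\bar x_2,\dots,\bar x_{d-1}$ of $\bar R$ for which $\bar R/(\bar x_2,\dots,\bar x_{d-1})$ has self-dual maximal ideal; lifting these to $x_2,\dots,x_{d-1}\in\m$ produces a regular sequence $x=x_1,x_2,\dots,x_{d-1}$ of $R$ with the required property.

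The main obstacle I expect is the simultaneous genericity in the inductive step: one must produce a single $x$ that is regular on both $R$ and $R/I$ and superficial for the Hilbert--Samuel filtration of $R/I$, and then verify that the image of the canonical ideal is again a canonical ideal. The latter hinges on the equality $I\cap xR=xI$, which is precisely the statement that $x$ is a non-zerodivisor modulo $I$, and the possibility of choosing such an $x$ at all relies on the preliminary estimate $\depth(R/I)\ge 1$. Everything else is bookkeeping: the multiplicity hypothesis $e(R/I)=2$ descends step by step and becomes $\ell=2$ in dimension one, after which Theorem~\ref{thA} closes the argument.
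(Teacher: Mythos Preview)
Your proof is correct and follows essentially the same strategy as the paper: cut $R$ down by parameters that are regular modulo $I$ and preserve the condition $e(R/I)=2$ until you reach dimension one, then invoke Theorem~\ref{thA}. The only difference is organizational---the paper does this in a single stroke by lifting a minimal reduction $\underline{y}$ of $\m/I$ in the Cohen--Macaulay ring $R/I$ (so that $\ell\bigl((R/I)/(\underline{y})\bigr)=e(R/I)=2$ at once), whereas you peel off one superficial element at a time by induction on $d$.
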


\begin{proof}
	Since $R/I$ is Cohen-Macaulay of dimension $d-1$ (\cite[Proposition 11.6]{LW}), we can take a minimal reduction $\underline{y}=y_1,\dots,y_{d-1}$ of the maximal ideal $\m/I$ in $R/I$.
	Then the length $\ell((R/I)/(\underline{y}))$ is equal to $e(R/I)(\le2)$.
	Let $\underline{x}=x_1,\dots,x_{d-1}$ be a preimage of $\underline{y}$ in $R$.
	As $I$ is unmixed, we can take $\underline{x}$ as a regular sequence in $R$.
	The tensor product $I'=I\otimes R/(\underline{x})$ is naturally isomorphic a canonical ideal of $R'=R/(\underline{x})$.
	The quotient $R'/I'$ has length $\ell(R/(I+\underline{x}))=\ell((R/I)/(\underline{y}))\le 2$.
	Therefore $R'$ has self-dual maximal ideal by Theorem \ref{thA}.
\end{proof}

\begin{ex}
	Let $R=k[[x^3,x^2y,xy^2,y^3]]$ be the third Veronese subring of $k[[x,y]]$.
	Then $I=(x^3,x^2y)R$ is a canonical ideal of $R$.
	The quotient ring $R/I$ is isomorphic to $k[[s,t]]/(s^2)$, and hence $e(R/I)=2$.
\end{ex}

Go back to the subject on self-duality of the maximal ideal.
Recall the notion of trace ideal of an $R$-module and nearly Gorensteiness of local rings (see \cite{HHS}).

\begin{dfn} \label{deftr}
Let $R$ be a commutative ring. For an $R$-module $M$, the {\it trace ideal} $M^*(M)$ of $M$ in $R$ is defined to be the ideal $\sum_{f\in \Hom_R(M,R)} \image f\subseteq R$.

A Cohen-Macaulay local ring $(R,\m)$ with a canonical module $\omega$ is called {\it nearly Gorenstein} if $\omega^*(\omega)\supseteq \m$.
\end{dfn}

\begin{lem} \label{lemng}
Let $(R,\m)$ be a Cohen-Macaulay local ring with a canonical module.
The following are equivalent.
\begin{itemize}
\item[(1)] $R$ is nearly Gorenstein.
\item[(2)] there is a surjective homomorphism $\omega^{\oplus n} \to \m$ for some $n$.
\end{itemize}
Moreover, if $\dim R\leq 1$, then we can add the following conditions.
\begin{itemize}
\item[(3)] there is a short exact sequence $0 \to \m^\dag \to R^{\oplus n} \to M \to 0$ for some $n$ and maximal Cohen-Macaulay module $M$.
\item[(4)] there is a short exact sequence $0 \to \m^\dag \to \m^{\oplus n} \to M \to 0$ for some $n$ and maximal Cohen-Macaulay module $M$.
\end{itemize}
\end{lem}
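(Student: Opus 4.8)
The plan is to prove $(1)\iff(2)$ over an arbitrary Cohen--Macaulay local ring with canonical module, and then bring in the hypothesis $\dim R\le 1$ together with the $\omega$-duality $(-)^{\dag}=\Hom_R(-,\omega)$ to handle $(3)$ and $(4)$.

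For $(1)\iff(2)$, I would choose generators $f_1,\dots,f_n$ of the finitely generated module $\Hom_R(\omega,R)$, so that $\omega^*(\omega)=\sum_i\image f_i=\image\bigl([f_1\,\cdots\,f_n]\colon\omega^{\oplus n}\to R\bigr)$; then $(2)\Rightarrow(1)$ follows by composing a surjection $\omega^{\oplus n}\to\m$ with $\m\hookrightarrow R$. For $(1)\Rightarrow(2)$: the ideal $\omega^*(\omega)$ contains $\m$, hence equals $\m$ or $R$; if it equals $\m$ the map $[f_1\,\cdots\,f_n]$ already surjects onto $\m$, and if it equals $R$ then $[f_1\,\cdots\,f_n]$ splits, so $\Hom_R(-,\omega)$ exhibits $\omega=R^{\dag}$ as a summand of $R^{\oplus n}$, forcing $\omega$ free and (by indecomposability of $\omega$, since $\End_R(\omega)\cong R$ is local) $\omega\cong R$, i.e.\ $R$ Gorenstein, in which case $(2)$ is clear.

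Now assume $\dim R\le 1$. Then $\m$ is maximal Cohen--Macaulay, $(-)^{\dag}$ is a duality on maximal Cohen--Macaulay modules (exact on short exact sequences, $\Ext^1_R(X,\omega)=0$ for such $X$, $\m^{\dag\dag}\cong\m$, $R^{\dag}=\omega$, $\omega^{\dag}=R$), and dualizing $0\to\m\to R\to k\to 0$ gives, when $\dim R=1$, a short exact sequence $0\to\omega\to\m^{\dag}\to k\to 0$, using $\Hom_R(k,\omega)=0$ and $\Ext^1_R(k,\omega)\cong k$. For $(2)\iff(3)$: the kernel $N$ of a surjection $\omega^{\oplus n}\to\m$ is a submodule of $\omega^{\oplus n}$, hence maximal Cohen--Macaulay by the depth lemma; dualizing gives $0\to\m^{\dag}\to R^{\oplus n}\to N^{\dag}\to 0$ with $N^{\dag}$ maximal Cohen--Macaulay, which is $(3)$, and conversely dualizing the sequence in $(3)$ (using $\m^{\dag\dag}\cong\m$) returns a surjection $\omega^{\oplus n}\to\m$. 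For $(2)\Rightarrow(4)$: assuming $R$ not Gorenstein (else $(4)$ holds with $n=1,M=0$), take $p=[p_1\,\cdots\,p_n]\colon\omega^{\oplus n}\to\m$ surjective. No $p_i\colon\omega\to\m$ can be split injective --- otherwise $\omega$ would be a direct summand of $\m$, and comparing generic ranks would give $\m\cong\omega$, hence $R$ a discrete valuation ring, arguing as for Theorem \ref{thA}$(5)\Rightarrow(4)$. Equivalently, for every $f\in\m^{\dag}$ each composite $f\circ p_i\in\End_R(\omega)\cong R$ is a non-unit, so the (injective) dual $p^{\dag}\colon\m^{\dag}\to R^{\oplus n}$ lands in $\m^{\oplus n}$. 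Putting $M=\coker(\m^{\dag}\hookrightarrow\m^{\oplus n})$, the chain $\m^{\dag}\subseteq\m^{\oplus n}\subseteq R^{\oplus n}$ yields $0\to M\to N^{\dag}\to k^{\oplus n}\to 0$ with $N^{\dag}$ maximal Cohen--Macaulay, so the depth lemma gives $\depth M\ge 1=\dim R$, i.e.\ $M$ is maximal Cohen--Macaulay and $(4)$ holds.

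For the converse $(4)\Rightarrow(2)$ --- the step I expect to be the main obstacle --- dualizing the sequence in $(4)$ only produces a surjection $(\m^{\dag})^{\oplus n}\to\m$, i.e.\ $\m\subseteq(\m^{\dag})^*(\m^{\dag})$, and one must upgrade this to $\m\subseteq\omega^*(\omega)$. The difficulty is structural: $(-)^{\dag}$ turns $R^{\oplus n}$ into $\omega^{\oplus n}$ but $\m^{\oplus n}$ into $(\m^{\dag})^{\oplus n}$, while trading $\m^{\dag}$ for $\omega$ along $0\to\omega\to\m^{\dag}\to k\to 0$ costs an $\Ext^1_R(k,\omega)\cong k$ term; tracking either route only delivers the weaker inclusion $\m^2\subseteq\omega^*(\omega)$, and the depth-lemma step used in $(2)\Rightarrow(4)$ cannot be run backwards, since an extension of $k^{\oplus n}$ by a maximal Cohen--Macaulay module need not be maximal Cohen--Macaulay. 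Closing this gap --- equivalently, extracting an honest surjection $\omega^{\oplus m}\to\m$ from a sequence $0\to\m^{\dag}\to\m^{\oplus n}\to M\to 0$ --- should require a hands-on analysis of the map $(\m^{\dag})^{\oplus n}\to\m$ and the module $M^{\dag}$ (using that $\m\m^{\dag}\subseteq\omega$ inside $\m^{\dag}$) rather than a formal diagram chase, and is where I would concentrate the effort.
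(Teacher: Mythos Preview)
Your treatment of $(1)\iff(2)$, $(2)\iff(3)$, and $(2)\Rightarrow(4)$ is correct and essentially matches the paper; your direct argument for $(2)\Rightarrow(4)$ is in fact the content of the lemma from \cite{Ko} that the paper cites at that step.

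The gap you identify at $(4)\Rightarrow(3)$ is real, but the line of attack you propose --- a hands-on analysis of the surjection $(\m^\dag)^{\oplus n}\to\m$, or of $M^\dag$, using $\m\m^\dag\subseteq\omega$ --- is not what closes it, and the paper does not proceed that way. The missing idea is structural and short: first show that $\m$ itself is a first syzygy of a maximal Cohen--Macaulay module, i.e.\ there is an exact sequence $0\to\m\to R^{\oplus a}\to N\to 0$ with $N$ maximal Cohen--Macaulay. Granting this, the composite embedding $\m^\dag\hookrightarrow\m^{\oplus n}\hookrightarrow R^{\oplus an}$ has cokernel $M'$ sitting in $0\to M\to M'\to N^{\oplus n}\to 0$, so $M'$ is maximal Cohen--Macaulay by the depth lemma and $(3)$ follows at once. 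The paper packages both steps as citations to \cite[Lemmas 2.6, 2.7]{Ko}.

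For the first step in dimension one (the case $\dim R=0$ is trivial, as $k$ is already maximal Cohen--Macaulay), the key point is that $\m$ is \emph{reflexive} when $R$ is not a discrete valuation ring: by Lemma~\ref{lem21} one has $\m^*=R:\m=\m:\m$, hence $\m^{**}=R:(\m:\m)$, which is an ideal of $R$ containing $\m$ (since $\m(\m:\m)=\m$) but not $1$ (since $\m:\m\supsetneq R$), so $\m^{**}=\m$. Now take any free presentation $R^{\oplus b}\to R^{\oplus a}\to\m^*\to 0$ and apply $\Hom_R(-,R)$: the left-exact sequence $0\to\m^{**}\to R^{\oplus a}\to R^{\oplus b}$ exhibits $\m=\m^{**}$ as the kernel of a map of free modules, so the cokernel of $\m\hookrightarrow R^{\oplus a}$ embeds in $R^{\oplus b}$ and is therefore torsion-free, i.e.\ maximal Cohen--Macaulay. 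This is exactly the ingredient your approach lacks; with it, $(4)\Rightarrow(3)$ becomes a two-line diagram chase rather than the delicate analysis you were anticipating.
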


\begin{proof}
(1) $\iff$ (2): By the definition of trace ideals, there is a surjection $\omega^{\oplus n}\to \omega^*(\omega)$ for some $n$.
So the equivalence immediately follows.
	
Now assume $\dim R\le 1$.
Then the maximal ideal $\m$ is maximal Cohen-Macaulay as an $R$-module.
So the condition (2) is equivalent to that there is a short exact sequence $0\to M\to \omega^{\oplus n}\to \m \to 0$ for some $n$ and maximal Cohen-Macaulay module $M$.
Taking the canonical duals, the equivalence of (2) and (3) follows.
	
We turn the equivalence of (3) and (4).
We may assume $R$ is not a discrete valuation ring, and hence both $\m$ and $\m^\dag$ are not free $R$-modules.
Assume that (3) holds.
The condition (3) means that $\m^\dag$ is a syzygy of a maximal Cohen--Macaulay module.
Thus by \cite[Lemma 2.8]{Ko}, there is a short exact sequence $0\to \m^\dag \to \m^{\oplus n} \to M' \to 0$ with some maximal Cohen--Macaulay $R$-module $M'$.
This shows the implication (3) $\implies$ (4).

Conversely, suppose that (4) holds.
Then we may use \cite[Lemma 2.6]{Ko} to show that (3) holds, since $\m$ is a syzygy of a maximal Cohen--Macaulay module by \cite[Lemma 2.7]{Ko}.
\end{proof}

\begin{prop} \label{p1}
Let $(R,\m)$ be a Cohen-Macaulay local ring with a canonical module. Assume $\m\cong \m^\dag$. Then
\begin{itemize}
\item[(1)] $R$ is nearly Gorenstein.
\item[(2)] If $R$ is non-Gorenstein and $2$ is invertible in $R$, then $R$ is G-regular.
\end{itemize}
\end{prop}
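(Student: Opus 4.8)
For part (1) the plan is simply to read the assertion off Lemma \ref{lemng}. If $R$ is Gorenstein then $\omega$ is free, so $\omega^*(\omega)=R\supseteq\m$ and there is nothing to prove; assume $R$ is not Gorenstein. By the lemma at the beginning of this section (the one asserting that $\m\cong\m^\dag$ forces $\dim R\le 1$), $\m$ is a maximal Cohen--Macaulay $R$-module, so Lemma \ref{lemng} is available, and $R$ is in particular not a discrete valuation ring. Now $\m$ is a first syzygy of a maximal Cohen--Macaulay module by \cite[Lemma 2.7]{Ko}, hence so is the isomorphic module $\m^\dag$; that is, there is a short exact sequence $0\to\m^\dag\to R^{\oplus n}\to M\to 0$ with $M$ maximal Cohen--Macaulay. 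This is precisely condition (3) of Lemma \ref{lemng}, so $R$ is nearly Gorenstein. (One can equally invoke condition (4) of that lemma directly, through the isomorphism $\m^\dag\cong\m$.)

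For part (2) the plan has three movements. First, I reduce to the Artinian case. If $\dim R=1$, prime avoidance produces a non-zerodivisor $x\in\m\setminus\m^2$; by that same lemma of this section $R/(x)$ again has self-dual maximal ideal, it is still non-Gorenstein, and it still contains $1/2$. Moreover G-regularity descends along $R\to R/(x)$: a totally reflexive $R$-module $M$ is maximal Cohen--Macaulay, so $x$ is regular on $M$, $M/xM$ is a totally reflexive $R/(x)$-module, and freeness of $M/xM$ over $R/(x)$ forces $M$ to be $R$-free. So I may assume $R$ is Artinian. Second, I identify the structure: since $R$ is a non-Gorenstein Artinian ring with $\m\cong\m^\dag$ and $1/2\in R$, Theorem \ref{thHVAAM} gives $g(R)=1$, i.e.\ $R\cong S/\operatorname{soc}(S)$ for an Artinian Gorenstein local ring $S$, and $S$ is not a hypersurface (otherwise $R$ would be Gorenstein).

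Third, I conclude by Golodness. By the theorem of Avramov and Levin on factoring out the socle of a Gorenstein local ring, $R\cong S/\operatorname{soc}(S)$ is Golod, and it is known that a Golod local ring which is not a hypersurface is G-regular. If one wants this self-contained: over such a ring the Poincar\'e series of any non-free finitely generated module has radius of convergence $<1$, so its Betti numbers grow exponentially; on the other hand, if $M$ is totally reflexive, then applying $\Hom_R(M,-)$ to $0\to\m\to R\to k\to 0$ and to $0\to k\to\omega\to\m\to 0$ (the Artinian form of the surjection $\omega\to\m$ in Theorem \ref{thHVAAM}(3)), and using $\Ext_R^{>0}(M,R)=0$ (total reflexivity) together with $\Ext_R^{>0}(M,\omega)=0$ (injectivity of $\omega$ over the Artinian ring $R$), one obtains $\Ext_R^i(M,k)\cong\Ext_R^{i+2}(M,k)$ for $i\ge 1$, so the Betti numbers of $M$ are eventually $2$-periodic, hence bounded. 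The two facts together force $\mathrm{pd}_RM<\infty$, whence $M$ is free; so $R$ is G-regular, and by the first movement so is the original ring. (If one prefers to stay in dimension one, the same bookkeeping with $0\to\omega\to\m\to k\to 0$ from Theorem \ref{thA}(4) shows instead that the Betti numbers of a totally reflexive module are eventually constant.)

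The main obstacle I expect is the third movement: ensuring that $S/\operatorname{soc}(S)$ is Golod in the form needed and that Golodness together with non-Gorensteinness really does exclude non-free totally reflexive modules; if the literature does not supply exactly this, one has to reprove enough of it, and the Poincar\'e-series estimate above is the only place where genuine work is required. The hypothesis $2\in R^\times$ is used solely to license Theorem \ref{thHVAAM} in the Artinian reduction.
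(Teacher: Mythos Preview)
Your argument for part (1) is correct and essentially the paper's: both invoke Lemma~\ref{lemng} after using the lemma that $\m\cong\m^\dag$ forces $\dim R\le 1$. The paper splits into the cases $\dim R=0$ (using $0\to\m\to R\to k\to 0$) and $\dim R=1$ (using the sequence of Theorem~\ref{thA}(4)); you package both cases together via \cite[Lemma~2.7]{Ko}, which is a harmless variation.

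For part (2), your reduction to the Artinian case matches the paper exactly: the paper uses the same passage to $R/(x)$ together with \cite[Proposition~4.2]{Tak} to lift G-regularity. At the Artinian step the paper simply cites \cite[Corollary~3.4]{Striuli-Vraciu}, whose proof exploits the $1/2\in R$ hypothesis to build a \emph{self-adjoint} isomorphism $\m\cong\m^\dag$ and argues directly with that symmetry; Golodness plays no role there.

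Your proposed alternative for the Artinian step has a real gap at exactly the point you flag. The Avramov--Levin theorem asserts that the surjection $S\to S/\operatorname{soc}(S)$ is a \emph{Golod homomorphism}; it does \emph{not} assert that the target is a Golod ring, and Golod homomorphisms do not compose, so one cannot pass through a regular cover of $S$ to conclude that $R$ is Golod. Without $R$ itself being Golod (or some substitute growth statement), your ``self-contained'' computation---which correctly gives $\Ext^i_R(M,k)\cong\Ext^{i+2}_R(M,k)$ for $i\ge 1$ from the pair of short exact sequences---only yields bounded Betti numbers, not freeness. Bounded (even periodic) Betti sequences occur for non-free modules over many non-Gorenstein rings, so nothing forces $\operatorname{pd}_R M<\infty$ at this point. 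Note also that your periodicity computation never uses $1/2\in R$ (the sequence $0\to k\to\omega\to\m\to 0$ already follows by dualizing $0\to\m\to R\to k\to 0$ and using $\m\cong\m^\dag$), which is a hint that an extra ingredient is genuinely missing: in \cite{Striuli-Vraciu} that ingredient is the self-adjointness of the isomorphism, and that is precisely where $2\in R^\times$ enters.
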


\begin{proof} We already saw that $\dim R\leq 1$ from Lemma \ref{lemng}.

(1) In the case of $\dim R=0$, we have a short exact sequence $0\to \m \to R \to k \to 0$ and hence we can apply Lemma \ref{lemng} (3) $\implies$ (1).

In the case of $\dim R=1$, we may assume $R$ is not a discrete valuation ring.
Applying Lemma \ref{lemng} to the short exact sequence in Theorem \ref{thA} (4), it follows that $R$ is nearly Gorenstein.

(2)  In the case that $\dim R=0$, the statement is proved in \cite[Corollary 3.4]{Striuli-Vraciu}.
So we may assume $\dim R=1$.
Take $x\in \m\setminus \m^2$ a non-zerodivisor.
%Applying $(-)^\dag$ to the exact sequence $0 \to \m \xrightarrow[]{x} \m \to \m/x\m \to 0$, we get an exact sequence
%\[
%0 \to \m^\dag \xrightarrow[]{x} \m^\dag \to \Ext^1_R(\m/x\m,\omega) \to 0.
%\]
%This shows that $\m^\dag/x\m^\dag\cong \Ext^1_R(\m/x\m,\omega)\cong \Hom_{R/(x)}(\m/x\m,\omega/x\omega)$.
%Therefore, by $\m\cong \m^\dag$, $\m/x\m$ is self-dual as an $R/(x)$-module.
Thanks to Lemma 3.1, the maximal ideal of $R/(x)$ is self-dual.
%On the other hand, there is a direct sum decomposition $\m/x\m\cong \overline{\m}\oplus k$ of $R/(x)$-modules, where $\overline{\m}$ is the maximal ideal of $R/(x)$.
%Since $k$ is self-dual over $R/(x)$, the $R/(x)$-module $\overline{\m}$ must be self-dual by the Krull--Schmidt theorem.
Then $R/(x)$ is G-regular by \cite[Corollary 3.4]{Striuli-Vraciu}.
It follows from \cite[Proposition 4.2]{Tak} that $R$ is also G-regular.
\end{proof}

\begin{ex}
Let $R=k[[t^4,t^5,t^7]]$. Then $R$ is almost Gorenstein local ring of dimension one. Therefore, $R$ is G-regular and nearly Gorenstein. On the other hand, $R$ does not have minimal multiplicity, and hence $\m$ is not self-dual. This shows that the converse of Proposition \ref{p1} doesn't hold true in general. 
\end{ex}

\begin{ex}
	The associated graded ring $\mathsf{gr}_\m(R)$ of a local ring $(R,\m)$ with self-dual maximal ideal need not be Cohen-Macaulay, for example, $R=k[[t^4,t^5,t^{11}]]$.
\end{ex}

We use the notion of minimal faithful modules.
The definition of them is given in below.

\begin{dfn}
Let $R$ be a commutative ring. An $R$-module $M$ is called {\it minimal faithful} if it is faithful and no proper submodule or quotient module is faithful.
\end{dfn}

\begin{ex}
For an artinian local ring $R$, the $R$-module $R$ and a canonical module $\omega$ of $R$ (i.e. injective hull of the residue field) are minimal faithful.
\end{ex}

The following is proved by Bergman \cite[Corollary 2]{Bergman}.

\begin{lem}[Bergman] \label{bergm}
Let $A,B$ and $C$ be finite-dimensional vector spaces over a field $k$. and $f\colon A\times B\to C$ be a bilinear map.
Assume the following conditions.
\begin{enumerate}[\rm(1)]
\item
any nonzero element $a$ of $A$ induces a nonzero map $f(a,-)\colon B\to C$
\item 
For any proper submodule $i\colon B'\to B$, there is a nonzero element $a\in A$ such that $f(a,i(-))\colon B'\to C$ is a zero map.
\item
For any proper quotient module $p\colon C\to C'$ there is a nonzero element $a\in A$ such that the map $p\circ f(a,-)\colon B\to C'$ is a zero map.
\end{enumerate}
Then $\dim_k A\ge \dim_k B+\dim_k C -1$.
\end{lem}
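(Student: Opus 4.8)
The plan is to recast everything in terms of $k$-linear duals. Condition (1) says exactly that $f$ induces an injection $\varphi\colon A\hookrightarrow\Hom_k(B,C)$, so first I would identify $A$ with a $k$-subspace of $\Hom_k(B,C)=B^{*}\otimes_k C$, where $B^{*}$ is the linear dual. A rank-one tensor $\beta\otimes\gamma$, viewed as a map $B\to C$, has kernel $\ker\beta$ and image $k\gamma$, and the maps vanishing on a fixed hyperplane $\ker\beta$ are precisely those in $k\beta\otimes C$; from this one checks (granting (1)) that condition (2) is equivalent to ``for every $0\ne\beta\in B^{*}$ there is $0\ne\gamma\in C$ with $\beta\otimes\gamma\in A$'' and, dually, condition (3) to ``for every $0\ne\gamma\in C$ there is $0\ne\beta\in B^{*}$ with $\beta\otimes\gamma\in A$''. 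Passing to transposes of linear maps shows that the hypotheses on $(A,B,C)$ are symmetric under $(B,C)\mapsto(C^{*},B^{*})$, which I would use to halve the casework.

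Next I would induct on $\dim_k B+\dim_k C$. The base cases $\dim_k C=1$ and $\dim_k B=1$ are immediate: if $\dim_k C=1$ the reformulated (2) forces $A=B^{*}$, so $\dim_k A=\dim_k B=\dim_k B+\dim_k C-1$, and the other case is the transpose of this. For the inductive step, with $\dim_k B,\dim_k C\ge 2$, set $C_\beta=\{\gamma\in C:\beta\otimes\gamma\in A\}$ and $B_\gamma=\{\beta\in B^{*}:\beta\otimes\gamma\in A\}$, which are nonzero by the reformulated hypotheses. If the one-dimensional $C_\beta$ do not span $C$, choose a hyperplane $C_0\subsetneq C$ containing all of them and pass to $A_0=\{a\in A:\image f(a,-)\subseteq C_0\}$ with the restricted map $A_0\times B\to C_0$. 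Then $(A_0,B,C_0)$ again satisfies (1)--(3): (1) and (3) are inherited for free (a witness for (3) over $C$ already has image in a subspace of $C_0$, hence lies in $A_0$), and (2) holds because for each hyperplane $\ker\beta\subseteq B$ one has $C_\beta\cap C_0\ne 0$ -- automatic when $\dim_k C_\beta\ge 2$, and true by construction when $\dim_k C_\beta=1$. Induction gives $\dim_k A_0\ge\dim_k B+(\dim_k C-1)-1$, and since $\sum_a\image f(a,-)=C\not\subseteq C_0$ we get $A_0\subsetneq A$, hence $\dim_k A\ge\dim_k A_0+1\ge\dim_k B+\dim_k C-1$. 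By the transpose symmetry, an analogous reduction handles the case where the generators of the one-dimensional $B_\gamma$ fail to span $B^{*}$.

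This leaves the ``rigid'' case, where the one-dimensional $C_\beta$ span $C$ \emph{and} the generators of the one-dimensional $B_\gamma$ span $B^{*}$. This case really occurs -- for instance when $A$ is the space of symmetric $2\times 2$ matrices, where the inequality is an equality -- so no further reduction is available, and I would argue directly that the rank-one elements of $A$ already span a subspace of dimension at least $\dim_k B+\dim_k C-1$. Concretely, from the two spanning families one produces rank-one elements $\beta^{(i)}\otimes\gamma_i\in A$ with $C_{\beta^{(i)}}=k\gamma_i$ and $\gamma_1,\dots,\gamma_c$ a basis of $C$, and $\eta_j\otimes\gamma^{(j)}\in A$ with $B_{\gamma^{(j)}}=k\eta_j$ and $\eta_1,\dots,\eta_b$ a basis of $B^{*}$, together with rank-one elements in suitably chosen further directions; the independence of enough of these is forced by the fact that the relevant $C_\beta$ and $B_\gamma$ are \emph{exactly} one-dimensional (so, e.g., $\beta^{(i)}\otimes\gamma_l\notin A$ for $l\ne i$, and the intersection of the spans of the two families collapses to a line). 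I expect this last bookkeeping -- carried out uniformly and over an arbitrary, possibly finite, field -- to be the main obstacle: a basis' worth of rank-one elements does not by itself suffice (as the symmetric-matrix example shows), so the additional directions must be chosen with care. This rigid case is where the real work of the lemma lies.
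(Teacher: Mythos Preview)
The paper does not prove this lemma at all: it is quoted verbatim as \cite[Corollary~2]{Bergman} and used as a black box in the proof of Lemma~\ref{lemcmf}. So there is no ``paper's own proof'' to compare against; any comparison would have to be with Bergman's original argument, which the paper does not reproduce.

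As for your proposal on its own terms: the dualisation, the reformulation of (2) and (3) in terms of rank-one tensors $\beta\otimes\gamma$ lying in $A\subseteq B^{*}\otimes C$, the transpose symmetry, and the inductive reduction when the one-dimensional $C_\beta$ (or dually $B_\gamma$) fail to span are all sound and standard. The genuine gap is exactly where you flag it: the ``rigid'' case. You have not given an argument, only a hope that suitably chosen rank-one elements will be independent. The two families $\{\beta^{(i)}\otimes\gamma_i\}_{i\le c}$ and $\{\eta_j\otimes\gamma^{(j)}\}_{j\le b}$ need not overlap in exactly one dimension without further constraints, and ``elements in suitably chosen further directions'' is not a proof. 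Since you yourself note that the symmetric $2\times2$ example shows the inequality can be sharp here, you cannot expect slack to absorb a loose count; the bookkeeping has to be exact, and you have not supplied it. Until that case is written out, the proposal is an outline rather than a proof.
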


To have an application of Lemma \label{bergm} for modules, we need the following lemmas.

\begin{lem}\label{injhom}
Let $R$ be a commutative ring, $n$ be a positive integer, $M,N$ be $R$-modules and $f=[f_1,\dots,f_n]^t\colon N\to M^{\oplus n}$ be an $R$-homomorphism.
Assume that $N$ is Artinian.
Then $f$ is injective if and only if for any nonzero element $a\in \mathrm{soc}(N)$, there exists $i$ such that $f_i(a)\not=0$.
\end{lem}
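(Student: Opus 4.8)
The plan is to rephrase everything in terms of the kernel of $f$. Writing $f=[f_1,\dots,f_n]^t$, an element $a\in N$ satisfies $f(a)=0$ precisely when $f_i(a)=0$ for every $i$, so $\ker f=\bigcap_{i=1}^n\ker f_i$; thus the condition ``there is an $i$ with $f_i(a)\ne 0$'' just says $a\notin\ker f$. Hence the asserted equivalence is the same as the statement that $f$ is injective if and only if $\mathrm{soc}(N)\cap\ker f=0$.

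One direction is immediate: if $f$ is injective then $\ker f=0$, so no nonzero element of $N$ at all --- in particular no nonzero element of $\mathrm{soc}(N)$ --- is killed by all the $f_i$. For the converse I would argue by contraposition. Suppose $f$ is not injective, so $K=\ker f$ is a nonzero submodule of $N$. As a submodule of an Artinian module, $K$ is again Artinian, and a nonzero Artinian module always contains a simple submodule: the family of nonzero submodules of $K$ is nonempty and, by the descending chain condition, has a minimal member $S$, which by minimality must be simple. Every simple submodule of $N$ lies in $\mathrm{soc}(N)$, so $0\ne S\subseteq\mathrm{soc}(N)\cap\ker f$, and any nonzero $a\in S$ is then a nonzero socle element with $f_i(a)=0$ for all $i$, contradicting the hypothesis.

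There is no real obstacle here; the only ingredients are the two standard facts just used --- a submodule of an Artinian module is Artinian, and a nonzero Artinian module contains a simple submodule, which is automatically contained in the socle. The hypothesis that $N$ is Artinian is exactly what allows one to descend from an arbitrary nonzero element of $\ker f$ to one sitting in $\mathrm{soc}(N)$, which is where the criterion is meant to be checked in the applications.
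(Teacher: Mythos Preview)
Your proof is correct and follows essentially the same approach as the paper: both identify $\ker f=\bigcap_i\ker f_i$ and reduce the equivalence to the statement that, for $N$ Artinian, $\ker f=0$ if and only if $\mathrm{soc}(N)\cap\ker f=0$. The paper simply asserts this last equivalence, while you spell out the standard reason (a nonzero Artinian submodule contains a simple submodule, hence meets the socle).
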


\begin{proof}
We can see that $\ker f=\ker f_1\cap\dots\cap \ker f_n$.
Thus $f$ is injective if and only if $\ker f_1\cap\dots\cap \ker f_n=0$.
Since $N$ is Artinian, the latter condition is equivalent to that $\mathrm{soc}(N)\cap\ker f_1\cap\dots\cap \ker f_n=0$.
\end{proof}

\begin{lem}\label{injdim}
Let $(R,\m,k)$ be an artinian local ring and $M, N$ be finitely generated $R$-modules.
Assume $M$ is minimal faithful and $N$ is Artinian.
If $n$ is the smallest positive integer such that exists an injective homomorphism $f=[f_1,\dots,f_n]\colon N \to M^{\oplus n}$, then the $k$-subspace $B$ of $\Hom_R(N,M)\otimes_R k$ generated by the image of $f_1,\dots,f_n$ has a dimension exactly equal to $n$ over $k$.
\end{lem}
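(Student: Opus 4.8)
The plan is to reduce everything to the observation that a homomorphism lying in $\m\Hom_R(N,M)$ automatically kills $\mathrm{soc}(N)$, and then to play this against Lemma~\ref{injhom}. Since $\Hom_R(N,M)\otimes_R k=\Hom_R(N,M)/\m\Hom_R(N,M)$ and $B$ is by definition spanned by the $n$ classes $\overline{f_1},\dots,\overline{f_n}$, one has $\dim_k B\le n$, so the content is that these classes are $k$-linearly independent. I would argue by contradiction. If they are dependent, then after reindexing we may write $\overline{f_n}=\sum_{i=1}^{n-1}\overline{c_i}\,\overline{f_i}$ for some $\overline{c_i}\in k$; lifting the $\overline{c_i}$ to $c_i\in R$, the homomorphism $h:=f_n-\sum_{i=1}^{n-1}c_if_i$ lies in $\m\Hom_R(N,M)$ (for $n=1$ this just says $f_1\in\m\Hom_R(N,M)$). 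We may assume $N\neq 0$, as the statement is only used in that case.

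Next I would compare $f=[f_1,\dots,f_n]$ with $\widetilde f:=[f_1,\dots,f_{n-1},h]\colon N\to M^{\oplus n}$: they are related by the automorphism $\psi$ of $M^{\oplus n}$ with $\psi(y_1,\dots,y_n)=(y_1,\dots,y_{n-1},y_n+\sum_{i<n}c_iy_i)$, whose inverse is $(y_1,\dots,y_n)\mapsto(y_1,\dots,y_{n-1},y_n-\sum_{i<n}c_iy_i)$, via $f=\psi\circ\widetilde f$; hence $\widetilde f$ is injective. The main point---and the step I expect to require the most care---is that $h$ annihilates $\mathrm{soc}(N)$: writing $h=\sum_j m_jh_j$ with $m_j\in\m$ and $h_j\in\Hom_R(N,M)$, for $a\in\mathrm{soc}(N)$ and $r\in\m$ one has $r\,h_j(a)=h_j(ra)=0$, so $h_j(a)\in\mathrm{soc}(M)$ and therefore $m_jh_j(a)\in\m\cdot\mathrm{soc}(M)=0$; summing, $h(a)=0$.

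Finally I would apply Lemma~\ref{injhom} to $\widetilde f$: every nonzero $a\in\mathrm{soc}(N)$ is nonzero under some component of $\widetilde f$, and since that component cannot be the last one (as $h(a)=0$), it is one of $f_1,\dots,f_{n-1}$. Applying the converse direction of Lemma~\ref{injhom}, the map $[f_1,\dots,f_{n-1}]\colon N\to M^{\oplus(n-1)}$ is injective, contradicting the minimality of $n$ when $n\ge 2$; when $n=1$ the conclusion $h(a)=f_1(a)=0$ for all $a\in\mathrm{soc}(N)$ together with injectivity of $f_1$ forces $\mathrm{soc}(N)=0$, hence $N=0$, again a contradiction. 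This yields $\dim_k B=n$. I expect the only genuinely delicate point to be the socle computation above; the bookkeeping with the coordinate change and the two invocations of Lemma~\ref{injhom} are routine, and it is worth noting that the minimal faithfulness of $M$ is not needed for this lemma itself, only later when Lemma~\ref{bergm} is applied.
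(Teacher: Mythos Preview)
Your proof is correct and follows essentially the same approach as the paper: both reduce to showing that any element of $\m\Hom_R(N,M)$ kills $\mathrm{soc}(N)$, and then use Lemma~\ref{injhom} to produce an injection $N\to M^{\oplus(n-1)}$, contradicting minimality. The only cosmetic difference is that the paper argues directly from the injectivity of $f$ without introducing the auxiliary automorphism $\psi$: once $f_n=\sum_{i<n}c_if_i+h$ with $h$ vanishing on $\mathrm{soc}(N)$, a nonzero $a\in\mathrm{soc}(N)$ with $f_i(a)=0$ for all $i<n$ would force $f_n(a)=h(a)=0$ as well, so some $f_i(a)\neq 0$ with $i<n$; your coordinate change accomplishes the same thing but is not needed. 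Your observation that minimal faithfulness of $M$ plays no role here is also correct.
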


\begin{proof}
We only need to show $\dim_k B\ge n$.
Assume there is a equation $f_1=a_1f_2+\cdots+a_nf_n+g$ for some $a_2,\dots,a_n\in R$ and $g\in \mathfrak{m}\Hom_R(N,M)$.
Then for any element $a\in \mathrm{soc}(N)$, $g(a)=0$.
So $n\ge 2$ and $f(a)\not=0$ implies there exists $i\ge 2$ such that $f_i(a)\not=0$.
This particular says that the homomorphism $[f_2,\dots,f_n]\colon N\to M^{\oplus n-1}$ also an injection by Lemma \ref{injhom}, which is a contradiction to our assumption on $n$.
\end{proof}

The following lemma is a generalization of the result of Gulliksen \cite[Lemma 2]{Gull}.

\begin{lem} \label{lemcmf}
Let $(R,\m,k)$ be an artinian local ring and $M, N$ be finitely generated faithful $R$-modules.
Assume $M$ is minimal faithful.
If there exists an injective homomorphism $N \to M^{\oplus n}$ for some $n$, then $\dim_k \mathrm{soc}(M)\leq\dim_k \mathrm{soc}(N)$ and equality holds if and only if $N\cong M$.
\end{lem}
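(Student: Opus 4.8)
The plan is to apply Bergman's Lemma~\ref{bergm} to an evaluation pairing built from a \emph{minimal} embedding of $N$ into a power of $M$. First I would let $n$ be the least positive integer for which an injective homomorphism $f=[f_1,\dots,f_n]^{t}\colon N\to M^{\oplus n}$ exists; such an $n$ is at least $1$ since $N$ is faithful, hence nonzero. By Lemma~\ref{injdim} the classes $\bar f_1,\dots,\bar f_n$ of the $f_i$ span a $k$-subspace $V\subseteq\Hom_R(N,M)\otimes_R k$ with $\dim_k V=n$. Next I would consider the $k$-bilinear map
\[
\beta\colon\ \mathrm{soc}(N)\times V\ \longrightarrow\ \mathrm{soc}(M),\qquad \beta(a,\bar g)=g(a),
\]
which is well defined because $g(\mathrm{soc}(N))\subseteq\mathrm{soc}(M)$ for any $R$-homomorphism $g$, and because $\m\cdot\mathrm{soc}(M)=0$ makes $g(a)=0$ whenever $g\in\m\Hom_R(N,M)$ and $a\in\mathrm{soc}(N)$. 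Granting the three hypotheses of Lemma~\ref{bergm} for $\beta$ (with $A=\mathrm{soc}(N)$, $B=V$, $C=\mathrm{soc}(M)$), that lemma yields $\dim_k\mathrm{soc}(N)\ge n+\dim_k\mathrm{soc}(M)-1\ge\dim_k\mathrm{soc}(M)$, which is the desired inequality.

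The heart of the proof is the verification of Bergman's three conditions, and each of them uses a different hypothesis. Condition~(1), that every nonzero $a\in\mathrm{soc}(N)$ induces a nonzero map $\beta(a,-)$, is precisely the injectivity of $f$ read through Lemma~\ref{injhom}: some $f_i(a)\neq 0$. For condition~(2), given a proper subspace $W\subsetneq V$ with $\dim_k W=m<n$, I would lift a basis of $W$ to homomorphisms $g_1,\dots,g_m\colon N\to M$; since $m<n$, minimality of $n$ forces $[g_1,\dots,g_m]^{t}\colon N\to M^{\oplus m}$ to be non-injective, so Lemma~\ref{injhom} produces a nonzero $a\in\mathrm{soc}(N)$ killed by every $g_j$, whence $\beta(a,-)$ vanishes on all of $W$. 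Condition~(3) is where minimal faithfulness of $M$ enters: a proper quotient $\mathrm{soc}(M)/U$ with $0\neq U$ makes $M/U$ a proper, hence non-faithful, quotient of $M$, so $sM\subseteq U$ for some nonzero $s\in R$; since $N$ is faithful we have $sN\neq 0$, so there is a nonzero element $a\in sN\cap\mathrm{soc}(N)$, say $a=sb$, and then $g(a)=s\,g(b)\in sM\subseteq U$ for every $g$ representing a class of $V$, i.e.\ $\beta(a,-)$ maps $V$ into $U$.

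For the equality statement, if $\dim_k\mathrm{soc}(M)=\dim_k\mathrm{soc}(N)$ then the inequality $\dim_k\mathrm{soc}(N)\ge n+\dim_k\mathrm{soc}(M)-1$ forces $n=1$, so $f=f_1\colon N\to M$ is injective; then $f_1(N)\cong N$ is a faithful submodule of $M$, and since $M$ is minimal faithful this submodule cannot be proper, hence $N\cong M$. The converse is immediate. I expect the main obstacle to be organizational rather than conceptual: choosing the pairing $\beta$ so that ``proper subspace of $B$'' corresponds exactly to ``too few of the $f_i$ to embed $N$'' and ``proper quotient of $C$'' corresponds to ``non-faithful quotient of $M$''; once $\beta$ is set up correctly, all three verifications are short, the substantive inputs being Lemma~\ref{injhom} and the defining property of a minimal faithful module.
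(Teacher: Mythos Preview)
Your proof is correct and follows the same overall strategy as the paper: take a minimal embedding $f=[f_1,\dots,f_n]^t$, set up the evaluation pairing $\mathrm{soc}(N)\times V\to\mathrm{soc}(M)$ with $V$ the span of the $\bar f_i$, and apply Bergman's Lemma~\ref{bergm}. Your treatments of conditions~(1), (2), and the equality case match the paper's. The one substantive difference is in condition~(3). The paper argues by contradiction: if (3) failed for some nonzero $U\subseteq\mathrm{soc}(M)$, then the composite $N\to M^{\oplus n}\to (M/U)^{\oplus n}$ would remain injective (by Lemma~\ref{injhom}), and since $N$ is faithful one embeds $R$ into a power of $N$ and hence into a power of $M/U$, forcing $M/U$ to be faithful and contradicting minimal faithfulness of $M$. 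Your direct argument---choose nonzero $s$ with $sM\subseteq U$, then any nonzero $a\in sN\cap\mathrm{soc}(N)$ satisfies $g(a)=s\,g(b)\in U$ for every $g$---is shorter and avoids the auxiliary embedding of $R$; it exploits faithfulness of $N$ in a more immediate way.
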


\begin{proof}
Let $n$ be the minimal integer such that there is an injective map $N \to M^{\oplus n}$.
Take a injective map $N \xrightarrow[]{[f_1,\dots,f_n]^t} M^{\oplus n}$ and set $B$ the $k$-subspace of $\Hom_R(N,M)\otimes_R k$ generated by the image of $f_1,\dots,f_n$.
Then $\dim_k B=n$ by Lemma \ref{injdim}.
By letting $A=\mathrm{soc}(N)$ and $C=\mathrm{soc}(M)$, we have a bilinear map $A\times B \to C$ over $k$ satisfying the assumption (1) and (2) of Lemma \ref{bergm} in view of Lemma \ref{injhom} and \ref{injdim}.
We also verify the condition (3) of Lemma \label{bergm} as follows.
Assume (3) is not satisfied.
Then there is a subspace $C'$ of $C$ such that any nonzero element $a$ of $A$ induces a nonzero map $p\circ f(-,a)\colon B\to C/C'$, where $p\colon C\to C/C'$ is the natural surjection.
Since $C/C'\subseteq M/C'$ as an $R$-module, we obtain an injective map $g\colon N\xrightarrow[]{q\circ f_1,\dots,q\circ f_n}(M/C')^{\oplus n}$, where $q\colon M\to M/C'$ is also the natural surjection.
Since $N$ is faithful, there is an injective map $h$ from $R$ to some copies $N^{\oplus m}$ of $N$.
Taking a composition of $h$ and $g^{\oplus m}$, one has an injective map from $R$ to $(M/C')^{\oplus mn}$.
In particular, $M/C'$ is a faithful $R$-module, which contradicts the assumption that $M$ is minimal faithful.

Therefore, we can apply Lemma \ref{bergm} and get an equality $\dim A\geq \dim B+\dim C-1$.
It follows that $\dim \mathrm{soc}(N)-\dim \mathrm{soc}(M)\geq n-1\geq 0$.
If the equalities hold, then $n=1$ and $N$ is isomorphic to a submodule of $M$.
By the minimality of $M$, one has $N\cong M$.
\end{proof}

%\begin{cor}
%Let $(R,\m,k)$ be a one-dimensional Cohen-Macaulay local ring and $M, N$ be maximal Cohen-Macaulay $R$-modules. Assume the following.
%\begin{itemize}
%\item[(a)] There exists a non-zero divisor $t\in\m$ such that $N/tN$ is a faithful module over $R/(t)$ and $M/tM$ is a minimal faithful $R/(t)$-module.
%\item[(b)] There exists a short exact sequence $0\to N \xrightarrow[]{[f_1,\dots,f_n]^t} M^{\oplus n} \to L \to 0$ for some $n$ and maximal Cohen-Macaulay $R$-module $L$.
%\end{itemize}
%Then $r(M)\leq r(N)$. If equality holds and $M/tM$ is indecomposable, then $N\cong M$.
%\end{cor}

We also give some basic properties of minimal faithful modules.

\begin{lem} \label{lemmf}
Let $(R,\m,k)$ be an artinian local ring.
Then
\begin{enumerate}[\rm(1)]
\item Any minimal faithful $R$-module is indecomposable.
\item Assume $R$ has Cohen-Macaulay type at most three.
Then $\ell_R(R)\leq \ell_R(M)$ for all faithful $R$-module $M$.
In particular, a faithful $R$-module $M$ is minimal faithful if $\ell_R(M)=\ell_R(R)$.
\end{enumerate}
\end{lem}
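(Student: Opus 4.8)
The argument naturally splits according to the two parts, with (1) soft and (2) consisting of a routine reduction followed by one genuinely delicate case.

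For (1), suppose $M$ is faithful but $M=M_1\oplus M_2$ with both summands nonzero; I will exhibit a proper faithful submodule, contradicting minimality. Each $M_i$ admits a surjection $\phi_i\colon M_i\to k$ onto its top, and I set $N=\{(m_1,m_2)\in M_1\oplus M_2 : \phi_1(m_1)=\phi_2(m_2)\}$, the kernel of the surjection $M\to k,\ (m_1,m_2)\mapsto\phi_1(m_1)-\phi_2(m_2)$. Then $N\subsetneq M$ with $\ell_R(M/N)=1$, and both projections $N\to M_i$ are onto because each $\phi_i$ is. Hence $\ann_R(N)=\ann_R(M_1)\cap\ann_R(M_2)=\ann_R(M)=0$, so $N$ is proper and faithful --- a contradiction.

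For (2), the ``in particular'' is immediate from the inequality: if $\ell_R(M)=\ell_R(R)$, then every proper submodule and every proper quotient of $M$ has length $<\ell_R(R)$, so by the inequality none is faithful, i.e.\ $M$ is minimal faithful. To prove the inequality, let $M$ be faithful; replacing $M$ by a faithful subquotient of least length we may assume $M$ is minimal faithful, hence indecomposable by (1). I use that $R$ and $\omega$ are minimal faithful and that Matlis duality $(-)^{\dag}=\Hom_R(-,\omega)$ is exact and length-preserving on finite-length modules, preserves faithfulness, and interchanges submodules with quotients; thus $M^{\dag}$ is minimal faithful, $\dim_k\mathrm{soc}(M^{\dag})=\mu_R(M)$, and every finite-length module $X$ embeds in $\omega^{\oplus\dim_k\mathrm{soc}(X)}$. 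Since $M$ is faithful there is an embedding $R\hookrightarrow M^{\oplus n}$ with $n$ minimal; Lemma \ref{lemcmf} gives $\dim_k\mathrm{soc}(M)\le r(R)$ with equality forcing $M\cong R$, and the Bergman estimate inside its proof gives $r(R)\ge n+\dim_k\mathrm{soc}(M)-1$. If $\dim_k\mathrm{soc}(M)=r(R)$ then $M\cong R$; if $\dim_k\mathrm{soc}(M)=1$ then $M\hookrightarrow E_R(k)=\omega$ and, $\omega$ being minimal faithful, $M\cong\omega$. In both cases $\ell_R(M)=\ell_R(R)$.

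The only remaining case is $r(R)=3$ and $\dim_k\mathrm{soc}(M)=2$; then $\mu_R(M)=2$ by the same analysis applied to $M^{\dag}$, so $M$ is a quotient of $R^{\oplus 2}$ and embeds in $\omega^{\oplus 2}$. Let $p_1,p_2\colon M\to\omega$ be the two components of such an embedding. If some $p_i$ is onto, then $M\twoheadrightarrow\omega$ and $\ell_R(M)\ge\ell_R(\omega)=\ell_R(R)$. Otherwise each $W_i:=p_i(M)$ is a nonzero proper submodule of $\omega$, hence has simple socle and equals $\omega_{R/I_i}$, where $I_i:=\ann_R W_i\neq 0$ and $I_1\cap I_2=\ann_R M=0$. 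By Goursat's lemma $M=W_1\times_Q W_2$ for a common quotient $Q$ of $W_1$ and $W_2$, with $Q\neq 0$ because $M$ is indecomposable, whence
\[
\ell_R(M)=\ell_R(R/I_1)+\ell_R(R/I_2)-\ell_R(Q)=\ell_R(R)+\ell_R\!\big(R/(I_1+I_2)\big)-\ell_R(Q)
\]
(using Goursat, $\ell_R(\omega_S)=\ell_R(S)$, and $I_1\cap I_2=0$), so it suffices to prove $\ell_R(Q)\le\ell_R\!\big(R/(I_1+I_2)\big)$. Now $Q$ is a module over $\overline R:=R/(I_1+I_2)$ with $\mu_{\overline R}(Q)\le\min\{r(R/I_1),r(R/I_2)\}$; if either $R/I_i$ is Gorenstein this makes $Q$ cyclic over $\overline R$, and we are done. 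In the last case, $r(R/I_1),r(R/I_2)\ge 2$ with $I_1\cap I_2=0$, so the zero ideal of $R$ is an intersection of at least four irreducible ideals while $r(R)=3$ forces its irredundant irreducible decomposition to have three terms; combining this constraint with Bergman's Lemma \ref{bergm} and the minimal faithfulness of $M$ one extracts the bound $\ell_R(Q)\le\ell_R(\overline R)$.

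The genuine obstacle is precisely this final configuration --- bounding the length of the amalgamating module $Q$ by that of $R/(I_1+I_2)$ when both $R/I_1$ and $R/I_2$ have Cohen--Macaulay type two. Everything preceding it is routine bookkeeping with Matlis duality and Lemma \ref{lemcmf}; it is here that the hypothesis $r(R)\le 3$ must be used essentially, and it is the step where Gulliksen's argument for rings of small type genuinely has to be pushed further via Bergman's lemma.
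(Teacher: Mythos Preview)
Your argument for (1) is correct and arguably cleaner than the paper's: you build the proper faithful submodule as the kernel of a single map to $k$, whereas the paper writes down an explicit generating set mixing generators of $M_1$ and $M_2$. Both are elementary.

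For (2), however, there is a genuine gap. The paper's proof is a one-line citation: the inequality $\ell_R(R)\le\ell_R(M)$ for faithful $M$ over an Artinian local ring of type at most three is exactly \cite[Theorem~1]{Gull}, and the ``in particular'' follows from it just as you say. You attempt instead to reprove Gulliksen's theorem from scratch. Your reductions are fine up through the case split: when $\dim_k\mathrm{soc}(M)\in\{1,r(R)\}$ you correctly conclude $M\cong\omega$ or $M\cong R$, and when $r(R)=3$, $\dim_k\mathrm{soc}(M)=2$ your Goursat/fiber-product analysis is set up correctly and does dispose of the subcase where one $R/I_i$ is Gorenstein.

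The problem is the last subcase, $r(R/I_1),r(R/I_2)\ge 2$. Your sentence ``combining this constraint with Bergman's Lemma~\ref{bergm} and the minimal faithfulness of $M$ one extracts the bound $\ell_R(Q)\le\ell_R(\overline R)$'' is not an argument---it is a promissory note, and your closing paragraph explicitly concedes this. The observation about irreducible decompositions (that $0=I_1\cap I_2$ would be written as an intersection of at least four irreducibles) is not a contradiction, since such a decomposition may well be redundant; and you give no mechanism linking that observation, Bergman's inequality, and the desired bound on $\ell_R(Q)$. Moreover, the ideals $I_i$ depend on the chosen embedding $M\hookrightarrow\omega^{\oplus 2}$, and you do not argue that an embedding can be chosen to avoid this bad subcase. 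So as written the proof of (2) is incomplete; either supply the missing argument or, as the paper does, invoke Gulliksen's theorem directly.
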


\begin{proof}
(1): Let $M$ be a minimal faithful $R$-module, and assume that $M$ decomposes as direct sum $M=M_1\oplus M_2$ of $R$-modules.
the faithfulness of $M$ yields that $\ann(M_1)\cap\ann(M_2)=0$.
Take minimal generators $x_1,\dots,x_n$ of $M_1$ and $y_1,\dots,y_m$ of $M_2$.
Without loss of generality, we may assume $n\le m$.
Then the submodule $N$ of $M=M_1\oplus M_2$ generated by the elements $x_1+y_1,\dots,x_n+y_n,0+y_{n+1},\dots,0+y_m$ is proper and faithful.
This contradicts that $M$ is minimal faithful.
(2): This follows by \cite[Theorem 1]{Gull}.
\end{proof}

\begin{dfn}
Let $(R,\m,k)$ be a commutative ring. A fractional ideal $I$ of $R$ is called {\it closed} \cite{BV} if the natural homomorphism $R\to \Hom_R(I,I)$ is an isomorphism.
\end{dfn}

\begin{ex}
Let $(R,\m,k)$ be a one-dimensional Cohen-Macaulay local ring. Set $B=\m:\m$. Then $\m$ is closed as a fractional ideal of $B$.
\end{ex}

\begin{lem}
Let $(R,\m,k)$ be a one-dimensional Cohen-Macaulay local ring having a canonical module and $I$ be a fractional ideal of $R$. Then the following are equivalent.
\begin{itemize}
\item[(1)] $I$ is closed.
\item[(2)] $I^\dag$ is closed.
\item[(3)] There is a surjective homomorphism $I^{\oplus n}\to \omega$ for some $n$.
\item[(4)] There is a short exact sequence $0\to R \to I^{\oplus n} \to M \to 0$ for some $n$ and maximal Cohen-Macaulay $R$-module $M$.
\end{itemize}
\end{lem}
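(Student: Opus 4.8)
The plan is to route everything through the canonical duality $(-)^\dag=\Hom_R(-,\omega)$, a contravariant self-equivalence of the category of maximal Cohen--Macaulay $R$-modules; since $\dim R=1$ and $R$ is Cohen--Macaulay, maximal Cohen--Macaulay means torsion-free, so $I$ and $I^\dag$ are faithful torsion-free modules with $I^{\dag\dag}\cong I$, while $\Ext^i_R(M,\omega)=0$ for $i>0$ and $M$ maximal Cohen--Macaulay, and $\omega^\dag=R$ (see \cite{BH}). I would first dispatch $(3)\iff(4)$: dualizing the sequence in $(4)$, the Ext-vanishing together with $R^\dag=\omega$ and $(I^\dag)^\dag\cong I$ turns it into a sequence $0\to M^\dag\to(I^\dag)^{\oplus n}\to\omega\to 0$, and conversely a surjection $(I^\dag)^{\oplus n}\twoheadrightarrow\omega$ has maximal Cohen--Macaulay kernel and dualizes back to a sequence as in $(4)$; thus $(4)$ is equivalent to the existence of a surjection $(I^\dag)^{\oplus n}\twoheadrightarrow\omega$. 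Since $\Hom_R(I^\dag,\omega)\cong I$ is finitely generated, such a surjection exists iff the trace of $I^\dag$ in $\omega$ equals $\omega$, and a direct computation (the trace of $I^\dag$ in $\omega$ and the trace $\tau:=\sum_{f\in\Hom_R(I,\omega)}\operatorname{im}f$ of $I$ in $\omega$ both equal $\sum_{a\in I,\,f\in I^\dag}R\,f(a)$) identifies this with the condition $\tau=\omega$, which is precisely $(3)$. So $(3)\iff(4)\iff(\tau=\omega)$.

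The heart is then $(1)\iff(3)$ (with $(2)$ obtained along the way). Writing $I\cong I^{\dag\dag}=\Hom_R(I^\dag,\omega)$ and using Hom--tensor adjunction gives a canonical isomorphism
\[
\End_R(I)=\Hom_R\bigl(I,\Hom_R(I^\dag,\omega)\bigr)\cong\Hom_R(I\otimes_R I^\dag,\omega)=(I\otimes_R I^\dag)^\dag,
\]
and the same formula with $I^\dag$ in place of $I$ (using $I^{\dag\dag}\cong I$) gives $\End_R(I^\dag)\cong(I\otimes_R I^\dag)^\dag$ as well; under these isomorphisms the natural maps $R\to\End_R(I)$ and $R\to\End_R(I^\dag)$ both correspond to the map $\omega^\dag\to(I\otimes_R I^\dag)^\dag$ dual to the evaluation $e\colon I\otimes_R I^\dag\to\omega$, $a\otimes f\mapsto f(a)$. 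Here $\operatorname{im}e=\tau$, and $\ker e$ is torsion: localizing at any minimal prime $\p$, the fractional ideal $I$ satisfies $I_\p=R_\p$ (it is a submodule of $Q(R)_\p=R_\p$ containing a unit), so $e_\p$ is the identity of $\omega_\p$. As $\omega$ is torsion-free, $\Hom_R(\ker e,\omega)=0$, so dualizing $0\to\ker e\to I\otimes_R I^\dag\to\tau\to0$ gives $(I\otimes_R I^\dag)^\dag\cong\tau^\dag$, compatibly with the natural maps; hence $\End_R(I)$ and $\End_R(I^\dag)$ are each naturally isomorphic to $\tau^\dag$, the natural map from $R=\omega^\dag$ becoming the map $\iota^\dag$ dual to the inclusion $\iota\colon\tau\hookrightarrow\omega$.

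It then remains to decide when $\iota^\dag$ is an isomorphism. The cokernel $C:=\omega/\tau$ has finite length: for a non-zerodivisor $t\in I$, restriction along $R\cong Rt\hookrightarrow I$ gives a map $I^\dag=\Hom_R(I,\omega)\to\Hom_R(Rt,\omega)\cong\omega$ whose image lies in $\tau$ and whose cokernel is $\Ext^1_R(I/Rt,\omega)$, of finite length since $I/Rt$ is torsion. Dualizing $0\to\tau\xrightarrow{\iota}\omega\to C\to0$ gives
\[
0\to C^\dag\to\omega^\dag\xrightarrow{\iota^\dag}\tau^\dag\to\Ext^1_R(C,\omega)\to\Ext^1_R(\omega,\omega)=0,
\]
where $C^\dag=0$ as $C$ is torsion and $\omega$ torsion-free, and for the finite-length module $C$ the term $\Ext^1_R(C,\omega)$ is the Matlis dual of $C$, hence vanishes iff $C=0$. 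So $\iota^\dag$ is an isomorphism iff $\tau=\omega$. Reading back: $I$ is closed $\iff R\to\End_R(I)$ is an isomorphism $\iff\iota^\dag$ is an isomorphism $\iff\tau=\omega\iff(3)$, and likewise $I^\dag$ is closed $\iff\tau=\omega$; together with $(3)\iff(4)$ this gives all the equivalences.

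The step I expect to take the most care is the bookkeeping around the canonical identifications---in particular checking that the adjunction isomorphism $\End_R(I)\cong(I\otimes_R I^\dag)^\dag$ transports the natural map from $R$ to the dual of the evaluation map (so that being closed really becomes the statement $\tau=\omega$), and that $\ker e$ is torsion even when $R$ is not generically Gorenstein. Everything else is routine homological algebra over a one-dimensional Cohen--Macaulay ring. When $R$ is generically Gorenstein this computation is just the fractional-ideal identity $\End_R(I)\cong\End_R(I^\dag)=(K_R:I):(K_R:I)=K_R:\!\bigl(I(K_R:I)\bigr)$ together with $I(K_R:I)=\tau$ and the biduality $K_R:(K_R:J)=J$; the module-theoretic version above is the repackaging that does not require choosing a fractional canonical ideal.
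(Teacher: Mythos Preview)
Your argument is correct. The identifications $\End_R(I)\cong(I\otimes_R I^\dag)^\dag\cong\tau^\dag$ via adjunction and the factorization of the evaluation map are checked carefully, the torsion of $\ker e$ follows as you say from $I_\p=R_\p$ at minimal primes (no generic Gorensteinness needed), and the analysis of $\iota^\dag$ via the finite-length quotient $C=\omega/\tau$ is clean. Your symmetric description of the trace, $\tau=\sum_{a\in I,\,f\in I^\dag}Rf(a)$, is exactly what makes $(3)$ invariant under $I\leftrightarrow I^\dag$ and hence equivalent to the $(I^\dag)$-version obtained by dualizing $(4)$.

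The paper itself does not give an argument here: it simply cites Brennan--Vasconcelos \cite[Proposition~2.1]{BV} for $(1)\Leftrightarrow(2)\Leftrightarrow(3)$ and remarks that $(4)$ is the canonical dual of $(3)$. Your proof is therefore a genuine self-contained replacement for that citation. It is essentially the module-theoretic repackaging of the fractional-ideal identity $\End_R(I)\cong K_R:\bigl(I(K_R:I)\bigr)$ that underlies \cite{BV}, as you note at the end; what you gain is that no canonical ideal $K_R\subseteq R$ is required, so the argument works uniformly without any generically Gorenstein hypothesis. The only mild redundancy is the restriction-along-$Rt$ argument for the finite length of $C$: you have already shown $e_\p$ is an isomorphism at each minimal prime, so $C_\p=0$ there and $C$ has finite length immediately.
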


\begin{proof}
	See \cite[Proposition 2.1]{BV}.
	Note that (4) follows by the canonical dual of (3), since $I$ is maximal Cohen-Macaulay as an $R$-module.
\end{proof}

%\begin{thm}
%Let $(R,\m,k)$ be a Cohen-Macaulay local ring of dimension one. Put $B=\m:\m$.
%\begin{itemize}
%\item[(1)] [GMP] If $B$ is Gorenstein, then $R$ is almost Gorenstein and has minimal multiplicity.
%\end{itemize}
%Assume $k$ is infinite.
%\begin{itemize}
%\item[(1)] If $B$ is local with Cohen-Macaulay type two and $R$ is nearly Gorenstein, then $R$ is almost Gorenstein and does not satisfy $\m\cong \m^\dag$.
%\item[(2)] If $B$ is local with Cohen-Macaulay type three and $R$ is nearly Gorenstein, then either $R$ is almost Gorenstein or $\m\cong \m^\dag$.
%\end{itemize}
%\end{thm}

\begin{proof}[Proof of Theorem 1.7]
Take a minimal reduction $(t)$ of $\m_B$.
The assumption that $R$ is nearly Gorenstein implies that any localization of $\omega_R$ at a non-maximal prime ideal is free of rank one (\cite[Proposition 2.3]{HHS}).
This yields that $\omega_R\otimes_R Q(R)=Q(R)$.
On the other hand, $\m\otimes_B Q(B)=\m\otimes_R Q(R)=Q(R)=Q(B)$ since $\m$ contains a regular element.
Thus one has
\begin{align*}
\m^\dag\otimes_B Q(B)=\Hom_R(\m,\omega_R)\otimes_B Q(B)&=\Hom_R(\m,\omega_R)\otimes_R Q(R)\\
&=\Hom_{Q(R)}(\m\otimes_RQ(R),\omega_R\otimes_RQ(R))\\
&=\Hom_{Q(R)}(Q(R),Q(R))=Q(R)=Q(B).
\end{align*}

This shows that $\m$ and $\m^\dag$ has constant rank one as a $B$-module.
In particular, we have equalities $\ell_B(B/tB)=e(B)=\ell_B(\m/t\m)=\ell_B(\m^\dag/t\m^\dag)$.
Note that $\m/t\m$ and $\m^\dag/t\m^\dag$ are faithful over $B/tB$ (see \cite[Proposition 3.3]{BV}).
As $B/tB$ has Cohen-Macaulay type less than or equal to three in both case (1) and (2), Lemma \ref{lemmf} ensures that $\m/t\m$ and $\m^\dag/t\m^\dag$ are minimal faithful over $B/tB$.
Consider the exact sequence 
\[
0 \to \m^\dag \xrightarrow{\phi} \m^{\oplus n} \to M \to 0
\]
as in Lemma \ref{lemng}. Then $\phi\in \Hom_R(\m^\dag,\m^{\oplus n})=\Hom_B(\m^\dag,\m^{\oplus n})$. Therefore $M=\coker \phi$ is also a $B$-module and it is torsion-free over $B$ as well as over $R$. Moreover, the above sequence is an exact sequence of $B$-modules and $B$-homomorphisms.
Tensoring $B/tB$ to this sequence, we have a short exact sequence
\begin{equation} \label{seq}
0 \to \m^\dag/t\m^\dag \xrightarrow{\phi\otimes B/tB} (\m/t\m)^{\oplus n} \to M/tM \to 0
\end{equation}
of $B/tB$-modules. 

(1): Applying Lemma \ref{lemcmf} to the sequence (\ref{seq}) and using \cite[Lemma 2]{Gull}, we obtain the inequalities
\[
1\leq \dim_k \mathrm{soc}(\m/t\m) \leq \dim_k \mathrm{soc}(\m^\dag/t \m^\dag) \leq r_B(B)=2.
\]
So one has either $\dim_k \mathrm{soc}(\m/t\m)=1$ or $\dim_k \dim_k\mathrm{soc}(\m/t\m)=\dim_k\mathrm{soc}(\m^\dag/t \m^\dag)=2$.
In the former case, we have $1=r_B(\m)=\mu_B(\m^\dag)$, which shows that
$\m^\dag$ must be a cyclic $B$-module.
Thus $\m^\dag\cong B$.
This yields that $R$ is almost Gorenstein by Lemma 2.4.
Suppose that $\dim_k \mathrm{soc}(\m/t\m)=\dim_k\mathrm{soc}(\m^\dag/t \m^\dag)=2$.

Take a system of minimal generator $a_1,\dots,a_m$ of $\m/t \m$ over $B/tB$, and for  each $i=1,\dots m$ we set a homomorphism $f_i\colon B/tB \to \m/t \m$, which sends $1$ to $a_i$.
Since $\m/t\m$ is faithful over $B/tB$, The homomorphism $B/tB \xrightarrow[]{[f_1,\dots,f_m]^t}(\m/t\m)^{\oplus m}$ is injective.
Then by Lemma 3.14, the equality $\dim_k\mathrm{soc}(\m/t\m)=2=\dim_k\mathrm{soc}(B/tB)$ induces an isomorphism $\m/t\m\cong B/tB$.
This shows that $\m$ is a cyclic $B$-module.
Thus we have $\m\cong B$.
Similar argument shows that $\dim_k\mathrm{soc}(\m^\dag/t\m^\dag)=2=\dim_k\mathrm{soc}(B/tB)$ implies that $\m^\dag\cong B$.
Then $R$ is almost Gorenstein and has minimal multiplicity by Lemma 2.4, and so
$B$ has type one by Theorem 1.8, a contradiction.

(2): Applying Lemma \ref{lemcmf} to the sequence (\ref{seq}), we obtain the inequalities
\[
1\leq \dim_k \mathrm{soc}(\m/t\m) \leq \dim_k \mathrm{soc}(\m^\dag/t \m^\dag) \leq r_B(B)=3.
\]

In the case that $\dim_k \mathrm{soc}(\m/t\m)=1$ or $\dim_k \mathrm{soc}(\m^\dag/t \m^\dag)=3=\dim_k\mathrm{soc}(B/tB)$, it follows by same argument as in (1) that $\m^\dag\cong B$, which implies that $R$ is almost Gorenstein by Lemma 2.4.
So we only need to consider the case $\dim_k \mathrm{soc}(\m/t\m)=\dim_k \mathrm{soc}(\m^\dag/t \m^\dag)=2$.
In this case, $\m/t\m$ should be isomorphic to $\m^\dag/t\m^\dag$ by lemma \ref{lemcmf}.
Put $\phi=[\phi_1,\dots,\phi_n]^t\colon \m^\dag\to\m^{\oplus n}$ and so $\phi\otimes B/tB=[\phi_1\otimes B/tB,\dots,\phi_n\otimes B/tB]^t$.
Consider the canonical dual $(\phi\otimes B/tB)^\dag\colon (\m/t\m)^{\dag\oplus n}\to (\m/t\m)$, which is surjective.
Since $\m/t\m$ is indecomposable (Lemma \ref{lemmf}), the Nakayama's lemma indicates that $\mathsf{jac}(\End(\m/t\m))\cdot(\m/t\m)\not= \m/t\m$, where $\mathsf{jac}(\End(\m/t\m))$ is the Jacobson radical of $\End(\m/t\m)$.
Therefore, one of the endomorphism $(\phi_1\otimes B/tB)^\dag,\dots,(\phi_n\otimes B/tB)^\dag$ of $\m/t\m$ must be not contained in $\mathsf{jac}(\End(\m/t\m))$, otherwise $(\phi\otimes B/tB)$ cannot be surjective.
This means that one of the $\phi_1\otimes B/tB,\dots,\phi_n\otimes B/tB$ is an isomorphism.
Say $\phi_i\otimes B/tB$ is an isomorphism.
Then the $B$-homomorphism $\phi_i\colon \m^\dag \to \m$ is also surjective.
Both $\m$ and $\m^\dag$ have constant rank, $\phi_i$ must be an isomorphism.
This shows that $\m\cong \m^\dag$.
\end{proof}

%\begin{prop}
%Let $(R,\m)$ be a Cohen-Macaulay local ring of dimension one with a canonical module. If $B:=\m:\m$ is Gorenstein, then $R$ is almost Gorenstein and has minimal multiplicity.
%\end{prop}

%\begin{prop}
%Let $(R,\m)$ be a Cohen-Macaulay local ring of dimension one with a canonical module. Assume $B:=\m:\m$ is local. If $R$ is nearly Gorenstein and $B$ has Cohen-Macaulay type $2$, then $R$ is almost Gorenstein but doesn't have minimal multiplicity.
%\end{prop}

%\begin{prop}
%Let $(R,\m)$ be a Cohen-Macaulay local ring of dimension one with a canonical module. Assume $B:=\m:\m$ is local. If $R$ is nearly Gorenstein and $B$ has Cohen-Macaulay type $3$, then $R$ is either almost Gorenstein or $\m\cong \m^\dag$.
%\end{prop}

\begin{cor}
Let $(R,\m,k)$ be a complete Cohen-Macaulay local ring of dimension one with a canonical module.
Assume $B\coloneqq\m:\m$ is local and $k$ is infinite.
If $R$ is nearly Gorenstein with multiplicity $e(R)\leq 4$, then either $R$ is almost Gorenstein or $\m\cong \m^\dag$.
\end{cor}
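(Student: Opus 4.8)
The plan is to reduce to Theorem~\ref{th313}, for which the essential point is to bound the Cohen--Macaulay type $r(B)$ of $B=\m:\m$ above by three.

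Note first that $B$ is a one-dimensional Cohen--Macaulay local ring: it is a torsion-free module-finite $R$-algebra of dimension one, hence maximal Cohen--Macaulay over $R$, and it is local by hypothesis. If $B$ is Gorenstein --- equivalently $r(B)=1$, which also covers the case that $R$ is a discrete valuation ring, where $B=R$ --- then Theorem~\ref{LSth18} gives that $R$ is almost Gorenstein and we are done. So assume $B$ is not Gorenstein. Then $R$ is not a DVR (otherwise $B=R$ would be one), so Lemma~\ref{lem21}(b) yields $R\subsetneq B$ with $\ell_R(B/R)=r(R)<\infty$; moreover $B$ itself is not a DVR, so $r(B)\ge 2$.

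The heart of the proof is the inequality $r(B)\le 3$, which I would obtain from $r(B)\le e(B)-1$ together with $e(B)\le e(R)\le 4$. For the first: since $k$, and hence $B/\m_B$, is infinite, choose a non-zerodivisor $y\in\m_B$ generating a minimal reduction of $\m_B$, so that $\ell_B(B/yB)=e(B)$ and $r(B)=\dim_{B/\m_B}\mathrm{soc}(B/yB)$. Because $B$ is not a DVR, $\m_B\ne yB$, so $\mathrm{soc}(B/yB)$ is a proper $B$-submodule of $B/yB$; hence $r(B)=\ell_B(\mathrm{soc}(B/yB))<\ell_B(B/yB)=e(B)$, i.e.\ $r(B)\le e(B)-1$. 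For the second: from $1\in B$ and $B\m\subseteq\m$ we get $\m B=\m$, an $\m_B$-primary ideal of $B$ contained in $\m_B$, so $e(B)=e(\m_B;B)\le e(\m B;B)=\lim_n\ell_B(B/\m^nB)/n\le\lim_n\ell_R(B/\m^nB)/n=e(\m;B)$, using $\ell_B(-)\le\ell_R(-)$ on $B$-modules; and additivity of Hilbert--Samuel multiplicity along $0\to R\to B\to B/R\to 0$, whose last term has finite length by Lemma~\ref{lem21}(b), gives $e(\m;B)=e(\m;R)=e(R)\le 4$. Thus $2\le r(B)\le 3$.

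With $R$ nearly Gorenstein, $k$ infinite, $B$ local and $r(B)\in\{2,3\}$, Theorem~\ref{th313} applies directly: if $r(B)=2$ then $R$ is almost Gorenstein, and if $r(B)=3$ then $R$ is almost Gorenstein or $\m\cong\m^\dag$. In every case the conclusion of the corollary holds. Beyond invoking Theorems~\ref{th313} and~\ref{LSth18}, the only substantive steps are the two comparisons above; of these, relating $e(B)$ to $e(R)$ for the birational overring $B=\m:\m$ is the place where a little care is required, and I expect it to be the main --- though not severe --- obstacle.
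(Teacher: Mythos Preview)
Your proof is correct and follows essentially the same strategy as the paper: bound $r(B)\le 3$ via multiplicity and then invoke Theorem~\ref{th313} (with Theorem~\ref{LSth18} handling the case $B$ Gorenstein). The only difference is cosmetic: the paper works with a single minimal reduction $t$ of $\m$ in $R$ and reads off the chain $e(R)=\ell_R(B/tB)\ge \ell_B(B/tB)>r(B)$ directly, whereas you pass through the intermediate quantity $e(B)$ using a separate reduction of $\m_B$; both routes encode the same comparison.
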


\begin{proof}
Take a minimal reduction $(t)$ of $R$.
We have that $B\otimes_R Q(R)=Q(R)$, this means that $B$ has a constant rank as an $R$-module.
Then the multiplicity $e(\m,B)=\ell_R(B/tB)$ of $B$ as an $R$-module is equal to $4$.
If $B$ is a discrete valuation ring, then the statement follows by Theorem 1.8.
So we may assume that $B/tB$ is not a field.
Deduce that
\[
4=\ell_R(B/tB)\geq \ell_B(B/tB)> \dim_k \mathrm{soc}(B/tB)=r(B).
\]
Now we can apply Theorem \ref{th313} and attain the desired statement.
\end{proof}

\begin{ex}
	Let $R=k[[t^5,t^6,t^7]]$.
	Then $R$ is nearly Gorenstein and has multiplicity $5$, however, neither $R$ is almost Gorenstein nor $\m\cong \m^\dag$.
\end{ex}

\section{Numerical semigroup rings}

In this section, we deal with the numerical semigroup rings $(R,\m)$ having an isomorphism $\m\cong \m^\dag$.
 We begin the section with recalling preliminaries on numerical semigroup.
Let $H\subsetneq \mathbb{N}$ be a numerical semigroup.
The set of {\it pseudo-Frobenius numbers} $\mathsf{PF}(H)$ of $H$ is consisting of integers $a\in \mathbb{N}\setminus H$ such that $a+b\in H$ for any $b\in H\setminus\{0\}$.
Then the maximal element $\mathsf{F}(H)$ of $\mathsf{PF}(H)$ is the Frobenius number of $H$. Set $H'=H\cup\{\mathsf{F}(H)\}$.
Then $H'$ is also a numerical semigroup.
A numerical semigroup of the form $H'=H\cup \{\mathsf{F}(H)\}$ for some symmtric numerical semigroup $H$ (see \cite{Kunz} for the definition of symmtric numerical semigroups) is called {\it UESY-semigroup (unitary extension of a symmetric semigroup)}, which is introduced in \cite{Rosales}.
Note that $\mathsf{F}(H)$ is a minimal generator of $H'=H\cup \{\mathsf{F}(H)\}$.
For a numerical semigoup $H$ and a field $k$, the {\it numerical semigoup ring} of $H$ over $k$ is the subring $k[[\{t^a\mid a\in H\}]]$ of $k[[t]]$, where $t$ is an indeterminate.

\begin{lem}
Let $H$ be a numerical semigroup, $k$ is an infinite field and $(R,\m)$ is the numerical semigroup ring $k[[H]]$. Then the following are equivalent.
\begin{itemize}
\item[(1)] $\m$ is self-dual.
\item[(2)] $H$ is a UESY-semigroup.
\end{itemize}
\end{lem}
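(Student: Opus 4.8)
The plan is to reduce the equivalence to Theorem \ref{thA} together with a combinatorial analysis of monomial ideals in $R=k[[H]]$. Write $f=\mathsf{F}(H)$; up to isomorphism the canonical module of $R$ is the monomial fractional ideal $K_H=\sum_{n\in\mathbb Z,\ f-n\notin H}kt^n\subseteq Q(R)=k((t))$, and since the normalization $k[[t]]$ is a discrete valuation ring a leading-term argument gives $\ell_R(R/J)=\#\bigl(H\setminus v(J)\bigr)$ for every fractional ideal $J\subseteq R$, where $v$ is the $t$-adic valuation. Because $v(\alpha K_H)=v(\alpha)+v(K_H)$ is a translate of $v(K_H)$, the colength of any canonical ideal of $R$ equals that of a monomial one, so I may always assume the canonical ideals under discussion are monomial, $I=\bigoplus_{n\in B}kt^n$ with $B+H\subseteq B$; and when $0\notin B$ then $B\cup\{0\}$ is a numerical semigroup with $k+I=k[[B\cup\{0\}]]$. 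I also recall that $R$ is Gorenstein iff $H$ is symmetric, and that $k$ is a coefficient field of $R$, so Theorem \ref{thA} applies; the one delicacy is that ``$\m\cong\m^\dag$'' is slightly stronger than condition (3) there (``$R$ Gorenstein or $\m\cong\m^\dag$''), so the Gorenstein case will be treated by hand.

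For $(2)\implies(1)$, I would write $H=H'\cup\{\mathsf{F}(H')\}$ with $H'$ symmetric (assuming $H'$ has at least two gaps, else $H=\mathbb N$) and put $S=k[[H']]$, a one-dimensional Gorenstein ring with maximal ideal $\mathfrak n$. A direct monomial computation then identifies $\mathfrak n:\mathfrak n$ inside $Q(S)$: it is again a monomial fractional ideal; for $n<0$ one has $t^n\notin\mathfrak n:\mathfrak n$, since applying the relation to $t^{e(S)}\in\mathfrak n$ would put $n+e(S)$ in $H'\cap(0,e(S))=\emptyset$; and for $n\ge0$ one has $t^n\in\mathfrak n:\mathfrak n$ iff $n\in H'\cup\mathsf{PF}(H')=H'\cup\{\mathsf{F}(H')\}=H$, using symmetry of $H'$. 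Hence $R=\mathfrak n:\mathfrak n=\End_S(\mathfrak n)$, which is condition (2) of Theorem \ref{thA}, so $R$ is Gorenstein or $\m\cong\m^\dag$. If $R$ is Gorenstein, then $H$ is symmetric, and comparing the numbers of gaps of the symmetric semigroups $H$ and $H'$ gives $\mathsf{F}(H)=\mathsf{F}(H')-2$; thus $\mathsf{F}(H')-2$ is a gap of $H'$, yet if $e(H')\ge3$ then $2\notin H'$ and symmetry of $H'$ forces $\mathsf{F}(H')-2\in H'$, a contradiction. So $e(H')=2$, whence $e(R)=2$ and $\m\cong\m^\dag$ by \cite[Theorem 2.6]{Ooishi}. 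In every case $\m$ is self-dual.

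For $(1)\implies(2)$, assume $\m\cong\m^\dag$. If $R$ is Gorenstein, then $e(R)\le2$ by \cite[Theorem 2.6]{Ooishi}; as $R$ is not a discrete valuation ring (because $H\neq\mathbb N$), $e(R)=2$, so $H=\langle 2,2c+1\rangle$ for some $c\ge1$, and $H=\langle 2,2c+3\rangle\cup\{2c+1\}$ realizes $H$ as the unitary extension of the symmetric semigroup $\langle 2,2c+3\rangle$ at its Frobenius number, so $H$ is UESY. If $R$ is not Gorenstein, condition (5) of Theorem \ref{thA} furnishes a canonical ideal of colength $\le2$, which by the reduction above I may take monomial, $I=\bigoplus_{n\in B}kt^n$. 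Its colength is not $0$ (else $I=R$ and $R$ Gorenstein), and not $1$: for then $I=\m\cong\omega$, so $\m\cong\m^\dag=\Hom_R(\m,\omega)\cong\Hom_R(\omega,\omega)=R$, making $\m$ principal and $R$ a discrete valuation ring. Hence $\ell_R(R/I)=2$, and by Lemma \ref{lmlmlm} the ring $S=k+I=k[[H']]$ with $H':=B\cup\{0\}$ is Gorenstein, so $H'$ is symmetric. Since $0\in H\setminus B$ and $\#(H\setminus B)=2$, we get $H=H'\cup\{a\}$ for a single $a\in H\setminus H'$; as $H$ is a numerical semigroup with $a\notin H'$, the relation $a+(H'\setminus\{0\})\subseteq H'$ holds, i.e.\ $a\in\mathsf{PF}(H')=\{\mathsf{F}(H')\}$. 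Therefore $H=H'\cup\{\mathsf{F}(H')\}$ is UESY.

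I expect the fiddliest parts to be the monomial identifications — proving $\mathfrak n:\mathfrak n=k[[H'\cup\{\mathsf{F}(H')\}]]$ cleanly, and justifying that the minimal canonical-ideal colength is a genuine semigroup invariant computed by monomial ideals (the $\min(S_{\mathfrak C_R})$ of Dibaei--Rahimi) — together with the Gorenstein/UESY borderline, whose crux is the elementary but easily-missed fact that a symmetric UESY semigroup has multiplicity at most two, handled above by the gap-count argument.
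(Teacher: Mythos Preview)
Your argument is correct. For $(2)\Rightarrow(1)$ you and the paper do the same thing—identify $\End_S(\mathfrak n)$ with $k[[H]]$ for $S=k[[H']]$ and invoke Theorem~\ref{thA}—though you are more scrupulous about the borderline where $R$ is already Gorenstein, which the paper glosses over. For $(1)\Rightarrow(2)$ you take a genuinely different route: the paper uses condition~(2) of Theorem~\ref{thA} to produce a Gorenstein subring $S\subseteq R$ with $R=\mathfrak n:\mathfrak n$, then passes to the value semigroup $v(S)$ and uses $R\mathfrak n\subseteq\mathfrak n$ to force $v(R)\setminus v(S)\subseteq\mathsf{PF}(v(S))=\{\mathsf F(v(S))\}$; you instead use condition~(5) to obtain a canonical ideal of colength $\le 2$, reduce it to a monomial ideal via the valuation identity $v(\alpha K_H)=v(\alpha)+v(K_H)$, and then apply Lemma~\ref{lmlmlm} so that $S=k+I$ is already a numerical semigroup ring $k[[H']]$ with $H'$ symmetric. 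The paper's valuation argument is shorter and avoids the monomial reduction; your approach has the advantage of producing the symmetric subsemigroup $H'$ explicitly as $v(I)\cup\{0\}$ rather than as the value semigroup of an a priori non-monomial $S$, and it makes the combinatorics of the colength visible.
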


\begin{proof}
(1) $\implies$ (2): In the case that $H$ is symmetric, or equivalently $R$ is Gorenstein (see \cite{Kunz}), it follows by Corollary 1.5 (3)$\implies$ (2) that $e(R)\le 2$.
Then there is an odd integer $a$ such that $H=\langle 2, a\rangle$.
It can be checked that the subsemigroup $H'=\langle 2, a+2\rangle$ of $H$ is symmetric, and $H\setminus H'=\{a\}$.
Thus $H$ is UESY.

We may assume that $H$ is not symmetric.
By Theorem \ref{thA}, there is a Gorenstein local subring $(S,\mathfrak{n})$ of $R$ such that $R=\mathfrak{n}:\mathfrak{n}$.
Take a value semigroup $v(S)$ of $S$, where $v$ is the normalized valuation of $k[[t]]$, that is, $v$ takes $t$ to $1\in\mathbb{Z}$.
Then $H=v(R)$, and $v(S)$ is symmetric by the result of Kunz \cite{Kunz}.
Since $R\mathfrak{n}\subset \mathfrak{n}$, $v(R)\setminus v(S)$ is contained in $\mathsf{PF}(v(S))$.
Since $v(S)$ is symmetric, $\mathsf{PF}(v(S))=\{F(v(S))\}$.
Thus one has $v(S)\subseteq H \subseteq v(S)\cup \{F(v(S))\}$.
Therefore, $H$ should be equal to $v(S)\cup \{F(v(S))\}$.
In particular, $H$ is UESY.

(2) $\implies$ (1): Describe $H$ as $H=H'\cup \{\mathsf{F}(H')\}$ with a symmtric numerical semigoup $H'$.
Set $S=k[[H']]$.
Then $\End_S(\m_S)$ is isomorphic to
\begin{align*}
\m\colon \m&=(t^a\mid a\in \mathbb{Z}\text{ such that for any }b\in H'\setminus\{0\},\ a+b\in H' )S\\
&=(t^a\mid a\in H'\cup PF(H'))S
=(t^a\mid a\in H)S=R.
\end{align*}

Thus by our theorem (Theorem \ref{thA}), the maximal ideal $\m$ of $R$ is self-dual.
\end{proof}

\begin{prop}
Let $H=\langle a_1,\dots,a_n\rangle$ be a symmetric numerical semigroup minimally generated by $\{a_i\}$ with $2<a_1<a_2<\cdots<a_n$ and $H'\coloneqq H\cup\{\mathsf{F}(H)\}$.
Put $S=k[[H]]$ over an infinite field $k$ and $R=k[[H']]$.
Then 
%one has an isomorphism $R/t^{a_1}R\cong \overline{S}\ltimes k$ of $S$-algebras.
%In particular, 
the maximal ideal of $R$ is quasi-decomposable.
\end{prop}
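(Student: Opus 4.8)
The plan is to exhibit a single non-zerodivisor $x\in\m_R$, where $\m_R$ denotes the maximal ideal of $R=k[[H']]$, such that $\m_R/(x)$ decomposes as an $R$-module; since $\dim R=1$ this one regular element is all that is needed for quasi-decomposability of $\m_R$, with $\underline{x}=x$. I would take $x=t^{a_1}$, the element coming from the multiplicity (so $(t^{a_1})$ is a minimal reduction of $\m_R$). Write $f=\mathsf{F}(H)$, so $H'=H\cup\{f\}$. Two preliminaries: since $H$ is symmetric and $a_1>2$ one has $f>a_1$ (if instead $f<a_1$, then $H=\{0\}\cup\{a_1,a_1+1,\dots\}$, which is symmetric only when $a_1=2$), so $a_1\nmid f$; writing $j_0=f\bmod a_1\in\{1,\dots,a_1-1\}$ and using the standard fact $\max\mathrm{Ap}(H,a_1)=\mathsf{F}(H)+a_1$ (where $\mathrm{Ap}(H,a_1)=\{h\in H\mid h-a_1\notin H\}$), the Apéry element of $H$ in the class $j_0$ is exactly $f+a_1$. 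Hence
\[
\mathrm{Ap}(H',a_1)=\bigl(\mathrm{Ap}(H,a_1)\setminus\{f+a_1\}\bigr)\cup\{f\},
\]
and the monomials $t^{w}$, $w\in\mathrm{Ap}(H',a_1)$, form a $k$-basis of the Artinian ring $R/(t^{a_1})$, those with $w>0$ forming a $k$-basis of $\m_R/(t^{a_1})$.

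With this basis I propose the decomposition
\[
\m_R/(t^{a_1})=V_1\oplus V_2,\qquad V_1=k\cdot t^{f},\qquad V_2=\sum_{\substack{w\in\mathrm{Ap}(H',a_1)\\ w\neq 0,\,f}}k\cdot t^{w},
\]
and the work is to check both summands are $R$-submodules. For $V_1$ the key claim is $\m_R\cdot t^{f}\subseteq(t^{a_1})$, i.e.\ $f+h-a_1\in H'$ for every $h\in H'\setminus\{0\}$: a two-line case check, since if $h=a_1$ then $f+h-a_1=f\in H'$, while if $h>a_1$ (in particular if $h=f$) then $f+h-a_1>f$, hence lies in $H$ because $f$ is the Frobenius number. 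Thus $t^{f}$ is annihilated by $\m_R$ in $R/(t^{a_1})$, so $V_1\cong k$ is a submodule and every product of basis monomials of $\m_R/(t^{a_1})$ involving $t^{f}$ vanishes. For $V_2$ the remaining products are $t^{w}t^{w'}$ with $w,w'\in\mathrm{Ap}(H,a_1)\setminus\{0,f+a_1\}$; then $w+w'\in H$, so reducing modulo $t^{a_1}$ gives either $0$ or $t^{w+w'}$ with $w+w'\in\mathrm{Ap}(H',a_1)$, and since $w+w'\in H$ while $f\notin H$ the outcome lies in $V_2$. Finally $V_1\neq 0$ trivially, and $V_2\neq 0$ because $|\mathrm{Ap}(H',a_1)\setminus\{0,f\}|=a_1-2\geq 1$, using $a_1>2$. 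This yields the decomposition, hence the proposition.

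The only real difficulty is the combinatorial bookkeeping: identifying $\mathrm{Ap}(H',a_1)$ in terms of $\mathrm{Ap}(H,a_1)$ and verifying the numerical inclusion $f+h-a_1\in H'$ — and it is precisely here that the symmetry of $H$ enters, through $\mathsf{PF}(H)=\{f\}$ and the resulting inequality $f>a_1$; for $a_1=2$ the construction genuinely fails, which is why that case is excluded. One could instead attempt a module-theoretic argument from $R\cong\End_S(\m_S)$, the identification $\omega_R\cong\m_S$, and the exact sequence $0\to\omega_R\to\m_R\to k\to 0$ of Theorem \ref{thA}, splitting $\m_R/(x)$ using that $\omega_R/x\omega_R$ is the canonical module of $R/(x)$; but controlling the relevant connecting homomorphism and extracting an actual splitting looks more delicate than the direct monomial computation, so I would carry out the latter.
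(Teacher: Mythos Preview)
Your proof is correct and follows essentially the same approach as the paper: both split off $k\cdot t^{\mathsf{F}(H)}$ from $\m_R/(t^{a_1})$ by verifying the key inclusion $\m_R\,t^{\mathsf{F}(H)}\subseteq (t^{a_1})$, which reduces to the same numerical checks $\mathsf{F}(H)+a_i-a_1\in H'$ and $2\mathsf{F}(H)-a_1\in H'$. The paper is more concise because it observes that once $t^{\mathsf{F}(H)}$ is a minimal generator of $\m_R$ annihilated by $\m_R$ modulo $(t^{a_1})$, the summand $k$ splits off automatically, whereas you carry out the extra (but harmless) Ap\'ery-set bookkeeping to exhibit the complement $V_2$ explicitly.
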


\begin{proof}
Denote by $\m_R$ the maximal ideal of $R$.
We will prove that the maximal ideal $\m_R/(t^{a_1})$ of $R/(t^{a_1})$ has a direct summand $I$ generated by the image of $t^{\mathsf{F}(H)}$, and $I\cong k$ as an $R$-module.
Since $t^{\mathsf{F}(H)}$ is a minimal generator of $\m_R$, it is enough to show that $\m_Rt^{\mathsf{F}(H)}\subseteq t^{a_1}R$.
So what we need to show is that $\mathsf{F}(H)+a_i-a_1\in H$ for all $i\not=1$ and $2\mathsf{F}(H)-a_1 \in H$.
These follow by the fact that $\mathsf{F}(H)$ is the largest number in $\mathbb{N}\setminus H$ and the inequalities $a_i-a_i>0$ and $\mathsf{F}(H)-a_1>0$.
\end{proof}

\section{Further characterizations}

The goal of this section is to give characterizations of local rings $R$ such that there exists a one-dimensional local hypersurface $(S,\mathfrak{n})$ such that $R\cong \End_S(\mathfrak{n})$.

\begin{prop}
	Let $(R,\m)$ be a Cohen-Macaulay local ring of dimension one.
	Assume that $R$ has a canonical module and infinite coefficient field $k$.
	Then the followings are equivalent.
\begin{enumerate}[\rm(1)]
	\item
	There is a local hypersurface $(S,\mathfrak{n})$ such that $R\cong \End_S(\mathfrak{n})$.
	\item 
	$e(R)\le 2$, or $R$ has type $2$ and a canonical ideal $I$ such that $I^2=\m I$ and $\ell_R(R/I)=2$.
	\item
	$e(R)\le 2$, or $R$ has embedding dimension $3$, and a canonical ideal $I$ such that $I^2=\m^2$.
\end{enumerate}
\end{prop}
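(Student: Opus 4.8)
The plan is to establish the cycle $(1)\implies(3)\implies(2)\implies(1)$, using throughout Theorem~\ref{thA}, the construction $S=k+I$ of Lemma~\ref{lmlmlm}, Lemma~\ref{lem21}, the equality $\edim R=r(R)+\dim R$ valid for rings with self-dual maximal ideal, and the fact that a one-dimensional local ring is a hypersurface exactly when it is Gorenstein of embedding dimension at most two (pass to the completion and write it as $T/\mathfrak a$ with $T$ regular of dimension two; a codimension-one Gorenstein quotient has $\mathrm{pd}_T=1$, so $\mathfrak a$ is free, hence principal). Recall also that $R$ admits a canonical ideal if and only if it is generically Gorenstein.

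For $(1)\implies(3)$, assume $R\cong\End_S(\mathfrak n)$ with $S$ a hypersurface. If $R$ is a discrete valuation ring, then $e(R)=1$ and the first alternative holds. Assume $R$ is not a DVR; then $S$ is not a DVR either (otherwise $R=S$), so $\edim S=2$, and by Lemma~\ref{lem21} we have $R=\mathfrak n:\mathfrak n=S:\mathfrak n\supsetneq S$ with $\ell_S(R/S)=r(S)=1$. Exactly as in the proof of Theorem~\ref{thA}~$(2)\implies(3)$, this forces $R/\m\cong S/\mathfrak n$, $\m^2\subseteq I\subsetneq\m$ with $\m/I\cong k$, and $\m\cong\m^\dag$, where $I:=\mathfrak n$; moreover $\omega_R\cong\Hom_S(R,S)=S:R=S:(S:\mathfrak n)=\mathfrak n=I$ (by $S$-reflexivity of $\mathfrak n$) and $\ell_R(R/I)=2$, so $I$ is a canonical ideal. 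If $R$ is Gorenstein, then $I=(x)$ is principal; from $x\m\subseteq\m^2\subseteq(x)$ and $R$ not a DVR we get $\m^2=x\m$, so $(x)$ is a reduction of $\m$ and $e(R)=\ell_R(R/(x))=\ell_R(R/I)=2$, again the first alternative. If $R$ is not Gorenstein, then $\edim R=r(R)+1$ and $r(R)=\mu_R(\omega_R)=\mu_R(I)\ge 2$. Along $I^2\subseteq\m I\subseteq\m^2\subseteq I$, the $k$-dimensions of the quotients appearing in $I/I^2\twoheadrightarrow I/\m I\twoheadrightarrow I/\m^2$ are $\mu_S(I)=\edim S=2$ on the left, $\mu_R(I)=r(R)$ in the middle, and $\edim R-1=r(R)$ on the right; hence $2\ge r(R)\ge 2$, so $r(R)=2$, $\edim R=3$ and $I^2=\m I=\m^2$, which is the second alternative of (3).

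For $(3)\implies(2)$, the first alternatives agree. For the second, suppose $\edim R=3$ and $I^2=\m^2$ for a canonical ideal $I$. Then $I\ne\m$ (otherwise $\omega_R\cong\m$, forcing $R$ to be a DVR as in the proof of Theorem~\ref{thA}~$(5)\implies(4)$, contrary to $\edim R=3$), so $I\subsetneq\m$; squeezing $I^2\subseteq\m I\subseteq\m^2=I^2$ gives $\m I=\m^2$, hence $\dim_k I/\m^2=\dim_k I/\m I=\mu_R(I)=r(R)$ and $\ell_R(R/I)=1+(\edim R-r(R))=4-r(R)$. If $r(R)=1$, then $R$ is Gorenstein, $I=(x)$, and $\m^2=\m I=x\m$, so $R$ has minimal multiplicity and $R/(x)$ is a Gorenstein artinian ring with square-zero maximal ideal, forcing $\edim R=2$, a contradiction; thus $r(R)=2$, $\ell_R(R/I)=2$ and $I^2=\m I$, the second alternative of (2). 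Finally, for $(2)\implies(1)$: if $e(R)\le2$ then either $R$ is a DVR (and $R\cong\End_R(\m)$ since $\m:\m=R$), or $e(R)=2$, in which case $R/(x)$ has length two for a minimal reduction $(x)$ of $\m$, so $R$ is Gorenstein of minimal multiplicity with $\m^2=x\m$; then $I:=xR$ is a canonical ideal with $\ell_R(R/I)=2$, Lemma~\ref{lmlmlm} gives that $S:=k+I$ is Gorenstein with $R\cong\End_S(\mathfrak n)$, and $\edim S=\dim_k I/I^2=\dim_k R/xR=2$, so $S$ is a hypersurface. In the remaining case $R$ has type $2$ with a canonical ideal $I$ satisfying $I^2=\m I$ and $\ell_R(R/I)=2$; again $S:=k+I$ is Gorenstein with $R\cong\End_S(\mathfrak n)$ by Lemma~\ref{lmlmlm}, and $\edim S=\dim_k I/I^2=\dim_k I/\m I=\mu_R(I)=r(R)=2$, so $S$ is a hypersurface.

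The step I expect to require the most care is the bookkeeping in $(1)\implies(3)$: keeping straight the two module structures on $I=\mathfrak n$ (over $S$, where it is the maximal ideal, and over $R$), so that $\mu_S(I)=\dim_k I/I^2$ is genuinely the invariant computing $\edim S$, and tracking the colengths $\ell_R(R/I)$, $\ell_R(\m/I)$ and $\dim_k I/\m^2$ exactly; one must also dispatch the degenerate cases (discrete valuation rings, and $I$ equal to $R$ or to $\m$). The two structural inputs that make the dimension counts close are the hypersurface characterization of Gorenstein rings of embedding dimension at most two and the identity $\edim R=r(R)+\dim R$ for rings with self-dual maximal ideal.
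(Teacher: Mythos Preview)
Your proof is correct and uses essentially the same ingredients as the paper's: the construction $S=k+I$ from Lemma~\ref{lmlmlm}, the identification $\edim S=\dim_k I/I^2$, and the chain $I^2\subseteq\m I\subseteq\m^2\subseteq I$. The paper organizes the argument as $(1)\Leftrightarrow(2)$ and $(2)\Leftrightarrow(3)$, whereas you run the cycle $(1)\Rightarrow(3)\Rightarrow(2)\Rightarrow(1)$; and where the paper bounds $\edim R\le\edim S+1=3$ directly from $R=S+St$, you instead invoke the identity $\edim R=r(R)+1$ for rings with self-dual maximal ideal. These are cosmetic differences. One tiny omission in your $(3)\Rightarrow(2)$: you should note that $r(R)\le 2$ (which follows from $\ell_R(R/I)=4-r(R)\ge 2$ since $I\subsetneq\m$) before reducing to the dichotomy $r(R)\in\{1,2\}$.
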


\begin{proof}
	(1)$\implies$(2): Assume $e(R)>2$ and $R$ satisfies (1).
	Then $R$ is not Gorenstein, and $I\coloneqq \mathfrak{n}$ is a canonical ideal of $R$.
	Since $S$ is a hypersurface and not a ，, $\ell_R(I/I^2)=\ell_S(\mathfrak{n}/\mathfrak{n}^2)=2$.
	It forces the equality $I^2=\m I$, since $I$ is not a principal ideal.
	
	(2)$\implies$(1): Consider the case that $e(R)\le 2$.
	Then by the proof of Corollary \ref{co1} (1) $\implies$ (6), there is a Gorenstein local ring $(S,\mathfrak{n})$ such that $R\cong \End_S(\mathfrak{n})$ and $e(S)=\edim S$.
	In particular, $e(S)\le 2$ and $S$ is a hypersurface.
	Now consider the case that $R$ has type 2 and a canonical ideal $I$ such that $I^2=\m I$ and $\ell_R(R/I)=2$.
	One has equalities $\ell_R(I/I^2)=\ell_R(I/\m I)=2$.
	Put $S\coloneqq k+I$.
	Then $S$ is Gorenstein local with a maximal ideal $\mathfrak{n}\coloneqq I$, and $R\cong \End_S(\mathfrak{n})$ (Lemma \ref{lmlmlm}).
	We can compute the embedding dimension $\edim S$ as follows:
\[
\edim S=\ell_S(\mathfrak{n}/\mathfrak{n}^2)=\ell_R(I/I^2)=2.
\]
	Therefore, $S$ should be a hypersurface.
	(2)$\implies$ (3): We may assume $R$ has type $2$.
	By the implication (2)$\implies$(1), we can calculate the embedding dimension of $R$ as $\edim R\le \edim S+1=3$, where $(S,\mathfrak{n})$ is a hypersurface with $R\cong \End_S(\mathfrak{n})$.
	Since $R$ is not Gorenstein, $\edim R$ should be equal to $3$.
	This means $\ell_R(\m/\m^2)=3$.
	On the other hand, one has
	\[
	\ell_R(\m/I^2)=\ell_R(\m/I)+\ell_R(I/I^2)=1+\ell_R(I/\m I)=1+2=3.
	\]
	So the inclusion $I^2\subseteq \m^2$ yields that $I^2=\m^2$.
	The direction (3)$\implies$(2) also follows by similar calculations.
\end{proof}

\begin{ques}
	For a Cohen-Macaulay local ring $(R,\m)$ of dimension one, when is there a local complete intersection $(S,\mathfrak{n})$ with an isomorphism $R\cong \End_S(\mathfrak{n})$?
\end{ques}
%\begin{lem}
%Let $(R,\m)$ be a Gorenstein local ring of dimension one. Set $B:=\m:\m$.
%\begin{itemize}
%\item[(1)] If $B$ is non-local, then $e(R)=2$.
%\item[(2)] Assume $B$ is local. Then $e(B)\leq e(R)$.
%\end{itemize}
%\end{lem}

%\begin{proof}
%(1) Observe that $B\to B\otimes_R \widehat{R}$ is a faithfully flat homomorphism and $B\otimes_R \widehat{R}\cong \End_{\widehat{R}}(\widehat{\m})$ as an $\widehat{R}$-algebra. Therefore $\widehat{m}:\widehat{m}$ is also non-local. Since $\widehat{R}$ has Krull-Schmidt property, this means $\widehat{m}$ is decomposable as an $\widehat{R}$-module. This forces $\widehat{R}\cong S/(xy)$ for some regular local ring $S$ of dimension two and regular system of parameters $x,y$ of $S$. In particular, $e(R)=e(\widehat{R})=2$.

%(2) The case
%\end{proof}

\begin{ac}
The author is grateful to his supervisor Ryo Takahashi for giving him kind advice throughout the paper, and to Osamu Iyama for his helpful comments on Theorem \ref{thA}.
The author is also grateful to Luchezar Avramov for useful comments.
Finally, the authors thank the referee for reading the paper carefully and giving fruitful comments and helpful suggestions.
\end{ac}
%%%%%%%%%%%%%%%%%%%%%%%%%%%%%%%%%%%%%%%%%%%%%%%%%%%%%%%%%%%%%%%%%%


\begin{thebibliography}{1}
\bibitem{Ananthnarayan}
{\sc H. Ananthnarayan}, The Gorenstein colength of an Artinian local ring. {\em J. Algebra} {\bf 320} (2008), no. 9, 3438-–3446.
\bibitem{A-A-M}
{\sc H. Ananthnarayan; L.L. Avramov; W.F. Moore}, Connected sums of Gorenstein local rings. {\em J. Reine Angew. Math.} {\bf 667} (2012), 149–-176.
\bibitem{B}
{\sc H. Bass}, On the ubiquity of Gorenstein rings, {\em Math. Zeitschrift} {\bf 82} (1963), 8-–28.
\bibitem{Bergman}
{\sc G. M. Bergman}, Minimal faithful modules over Artinian rings. {\em Publ. Mat.} {\bf 59} (2015), no. 2, 271-–300.
\bibitem{BV}
{\sc J. Brennan; W. Vasconcelos}, On the structure of closed ideals, {\em Math. Scand.} {\bf 88} (2001), no. 1, 3--16.
\bibitem{BH}
{\sc W. Bruns; J. Herzog}, Cohen--Macaulay rings, revised edition, Cambridge Studies in Advanced Mathematics, {\bf 39}, {\it Cambridge University Press, Cambridge}, 1998.
\bibitem{DR}
{\sc M. T. Dibaei; M. Rahimi}, Rings with canonical reduction. \texttt{arXiv:1712.00755}.
\bibitem{E}
{\sc D. Eisenbud}, Commutative algebra, With a view toward algebraic geometry, Graduate Texts in Mathematics, {\bf 150}, {\em Springer-Verlag, New York}, 1995.
\bibitem{Elias-Takatuji}
{\sc  J. Elias; M. Silva Takatuji}, On Teter rings. {\em Proc. Roy. Soc. Edinburgh Sect. A} {\bf 147} (2017), no. 1, 125–-139.
\bibitem{GGHV}
{\sc L. Ghezzi; S.Goto; J. Hong; W. V. Vasconcelos}, Invariants of Cohen–Macaulay rings associated to their canonical ideals. {\em Journal of Algebra}, {\bf 489}, 506--528.
\bibitem{GMP}
{\sc S. Goto; N. Matsuoka; T. Phuong}, Almost Gorenstein rings. {\em J. Algebra} {\bf 379} (2013), 355–-381.
\bibitem{Gull}
{\sc Tor H. Gulliksen}, On the length of faithful modules over Artinian local rings, {\em Math. Scand.} {\bf31} (1972) 78--82.
\bibitem{HHS}
{J. Herzog; T. Hibi, D. I. Stamate}, The trace of the canonical module, {\em Israel J. Math.} {\bf 233} (2019), no. 1, 133--165. 
\bibitem{Huneke-Vraciu}
.{\sc C. Huneke; A. Vraciu}, Rings which are almost Gorenstein. {\em Pacific J. Math.}, {\bf 225} (2006) no. 1, 85--102.
\bibitem{Ko}
{\sc T. Kobayashi}, Syzygies of Cohen-Macaulay modules over one dimensional Cohen-Macaulay local rings, Preprint (2017), \texttt{arXiv:1710.02673}.
\bibitem{KT}
{\sc T. Kobayashi; R. Takahashi}, Rings whose ideals are isomorphic to trace ideals,
{\em Math. Nachr.} {\bf 292} (2019), no. 10, 2252--2261.
\bibitem{Kunz}
{\sc E. Kunz}, The value-semigroup of a one-dimensional Gorenstein ring, {\em Proc. Amer. Math. Soc.} {\bf 25} (1970), 748--751.
\bibitem{Kustin-Vraciu}
{\sc A. Kustin; A. Vraciu}, Totally reflexive modules over rings that are close to Gorenstein. \texttt{arXiv:1705.05714}.
\bibitem{LW}
{\sc G. J. Leuschke; R. Wiegand}, Cohen-Macaulay Representations, Mathematical Surveys and Monographs, vol. 181, {\em American Mathematical Society, Providence, RI}, 2012.
\bibitem{NT}
{\sc S.Nasseh; R. Takahashi}, Local rings with quasi-decomposable maximal ideal, {\em Math. Proc. Cambridge Philos. Soc.} (to appear).
\bibitem{Ooishi}
{\sc A. Ooishi}, On the self-dual maximal Cohen-Macaulay modules, {\em J. Pure Appl. Algebra} {\bf 106} (1996), no. 1, 93--102.
\bibitem{Rosales}
{\sc J. C. Rosales}, Numerical semigroups that differ from a symmetric numerical semigroup in one element.  {\em Algebra Colloq.} {\bf 15} (2008), no. 1, 23-–32. 
\bibitem{Sally2}
{\sc J. D. Sally}, Tangent cones at Gorenstein singularities, {\em Comp. Math.} {\bf 40} (1980), 167--175.
\bibitem{Sally}
{\sc  J. D. Sally},	Cohen-Macaulay local rings of embedding dimension e+d−2, {\em J. Algebra} {\bf 83} (1983), 393--408.
\bibitem{Striuli-Vraciu}
{\sc J. Striuli;A. Vraciu}, Some homological properties of almost Gorenstein rings, Commutative algebra and its connections to geometry, 201-–215, Contemp. Math., {\bf 555}, Amer. Math. Soc., Providence, RI, 2011.
\bibitem{Tak}
{\sc R. Takahashi}, On G-regular local rings, {\em Comm. Algebra} {\bf 36} (2008), no. 12, 4472--4491.
\bibitem{Teter}
{\sc W. Teter}, Rings which are a factor of a Gorenstein ring by its socle. {\em Inventione Math}, {\bf 23} (1974), 153--162.
\bibitem{Vas}
{\sc W. V. Vasconcelos}, Ideals generated by R-sequences, {\em J. Algebra} {\bf 6} (1967), 309--316.

\bibitem{Yoshino}
{\sc Y. Yoshino}, Cohen-Macaulay modules over Cohen-Macaulay rings, London Mathematical Society Lecture Note Series, 146, {\em Cambridge University Press, Cambridge}, 1990.
\end{thebibliography}
\end{document}